\documentclass[12pt,a4paper]{article}

\usepackage{geometry}
 \geometry{
 a4paper,
 left=30mm,
 top=30mm,
 right=30mm,
 bottom=30mm,
 }

\usepackage[utf8x]{inputenc}
\usepackage{amsmath}
\usepackage{amsthm}
\usepackage{float}
\usepackage{amssymb}
\usepackage{amsfonts}
\usepackage[english]{babel}
\usepackage{enumerate}
\usepackage[fixlanguage]{babelbib}
\usepackage{graphicx}
\usepackage{tikz}
\usepackage{mathtools}
\usepackage{graphics}
\usepackage{mathrsfs,hyperref}
\usepackage[shortlabels]{enumitem}
\usepackage{epstopdf}
\usepackage{bigints}
\usepackage{textgreek}
\usepackage{gfsporson}
\usepackage{amssymb}
\usepackage{amsmath}
\usetikzlibrary{matrix,arrows,decorations.pathmorphing}

\DeclareMathOperator{\supp}{supp}
\DeclareMathOperator{\curl}{curl}

\DeclareMathOperator{\dist}{dist}
\DeclareMathOperator{\Div}{div}
\DeclareMathOperator{\diam}{diam}

\newcommand{\res}{\mathop{\hbox{\vrule height 7pt width 0.5pt depth 0pt
\vrule height 0.5pt width 6pt depth 0pt}}\nolimits}
\newcommand{\black}{\color{black}}

\newcommand{\weakstar}{\stackrel{*}{\rightharpoonup}}
\newcommand{\weak}{\rightharpoonup}

\newcommand{\R}{\mathbb{R}}
\newcommand{\N}{\mathbb{N}}
\newcommand{\Om}{\Omega}
\newcommand{\eps}{\varepsilon}

\definecolor{Aquamarine}{cmyk}{0.82,0,0.30,0}
\definecolor{Orange}{cmyk}{0,0.61,0.87,0}
\definecolor{LimeGreen}{cmyk}{0.50, 0.5, 1, 0}

\theoremstyle{definition}
\newtheorem{teo}{Theorem}[section]
\newtheorem{lem}[teo]{Lemma}

\newtheorem{prop}[teo]{Proposition}

\newtheorem{cor}[teo]{Corollary}
\theoremstyle{plain} 
\newtheorem{defin}[teo]{Definition}

\newtheorem{oss}[teo]{Remark}

\numberwithin{equation}{section}
\title{Nonlinear three-dimensional derivation of line tension for dislocations: quadratic growth}
\author{Adriana Garroni, Roberta Marziani, Riccardo Scala}

\begin{document}

\maketitle
\begin{abstract}
	In this paper we derive a line tension model for dislocations in 3d starting from a geometrically nonlinear elastic energy with quadratic growth.
	In the asymptotic analysis,  as the amplitude of the Burgers vectors (proportional to the lattice spacing) tends to zero, we show that the elastic energy linearises and the line tension energy density, up to an overall constant rotation, is identified by the linearised cell problem  formula given in  \cite{C.G.O.}. 
\end{abstract}

\noindent {\bf Key words:}~~Dislocations, $\Gamma$-convergence, relaxation, nonlinear elasticity.

\vspace{2mm}

\noindent {\bf AMS (MOS) subject clas\-si\-fi\-ca\-tion:}  49J45, 58K45, 74C05.

\section{Introduction}
\label{intro}
Dislocations are line defects in crystals originated by plastic slips. Their presence, motion, and interaction are considered the key ingredients in order to understand plastic behaviour of metals, as well as other important effects (e.g. the interface energy at grain boundaries). We refer to \cite{BBS,HL} for a general introduction to the subject.

In the last decades the mathematical community has shown an increasing interest in the analysis of models  for dislocations involving several different approaches and frameworks.
Here we focus on a three dimensional  semi-discrete variational model where dislocations can be seen as topological singularities of a continuum strain field. 
More precisely, while a deformed elastic body can be described by a deformation whose gradient represents locally the distortion of an undeformed reference configuration, in the presence of defects the relevant continuum variable is a field $\beta\in L^1(\Om;\R^{3\times3})$ (the strain field), which may be represented by a gradient only locally. Therefore the defects may be identified with the set in which the $\curl\beta$ is concentrated.   Precisely a distribution of dislocations in an elastic body $\Om\subset\R^3$ is represented by a matrix valued measure $\mu$ of the form
\begin{equation}\label{measureintro}
\mu=b\otimes t\mathcal{H}^1\res\gamma,	
\end{equation}
where $b\in\mathcal{B}$ is the (normalised) Burgers vector (a so to say vector valued multiplicity which describes the kinematics of the line defect) and $\mathcal{B}\subset\R^3$ is a discrete lattice generated by the set of admissible Burgers vectors  (which depends on the underline crystalline structure of the body), $\gamma$ is a closed curve in $\Om$ and $t$ its unit tangent vector. Therefore the strain in the presence of a distribution of dislocations $\mu$ satisfies
\begin{equation}\label{rotore}
\curl\beta=\eps\mu\quad\text{in $\Om$},
\end{equation}
in the sense of distributions.  
The small parameter $\eps$ represents here a length scale which is comparable with the lattice parameter and reveals the discrete nature of the model. The discreteness is indeed highlighted by the presence of the line $\gamma$ around which crystal defects occur at a microscopic scale, inside the so called core of the dislocation, while the continuum variable $\beta$ is actually an approximation of the discrete deformation far from the core. The coexistence of the continuum variable and the discrete one, which makes this model so to say semi-discrete,  is common feature of models showing topological defects 
(this is also the case of Ginzburg Landau models for superconductors or Landau De Gennes models for liquid crystals, see e.g. \cite{ABO,B.B.H.,B.B.O.,Je,SS,SS1}).

The validity of the continuum approximation though is limited to regimes in which the density of dislocations is not too high and in a region sufficiently far from the dislocations. It is known indeed that  an incompatible strain $\beta$ satisfying \eqref{rotore} diverges close to the dislocation line as
$$
|\beta(x)|\simeq\frac{1}{\dist(x,\supp\mu)},
$$
and in particular is not squared integrable.
In order to work with the continuum strain field it is then common to perform a regularisation of the elastic energy:  either removing the core of the dislocation from the energy by considering 
\begin{equation}\label{energyintro}
\int_{\Om_\varepsilon(\mu)} W(\beta)dx,
\end{equation}
with $\Om_\varepsilon(\mu):=\{x\in\Om:\dist(x,\supp\mu)>\varepsilon\}$ or by regularising directly the strain field enforcing the following alternative constraint
\begin{equation}
\curl\beta=\varepsilon \mu*\varphi_\varepsilon,
\end{equation}
with $\varphi_\varepsilon$ a  mollifier at scale $\eps$. We will use the latter which we refer to as the regularisation by mollification while  the former is the so called core cut-off regularisation.

In the framework of linear elasticity it is well known that the energy stored by a straight dislocation with Burgers vector $b$ and direction $t$ in a hollow cylinder $T_\varepsilon$ with inner  and outer radii  respectively $0<\varepsilon<R$, and height $h$  is given by 
$$ 
\int_{T_\varepsilon} \frac12\mathbb{C}\beta:\beta\ dx\simeq \Psi_0(b,t)h\log\frac R\varepsilon,
$$ 
where $\mathbb{C}$ denotes the elastic tensor and $\curl\beta=b\otimes  t\mathcal{H}^1\res\R t$ in $T_\varepsilon$. 

The function $\Psi_0$ is the so called self-energy per unit length of a straight infinity dislocation and, for any Burgers vector $b\in\R^3$ and any direction $t\in S^1$,  it is obtained classically by solving the elastic problem in the whole of $\R^3$ or it can be characterised, as in \eqref{sself}, by a suitable variational formula  (see \cite[Lemma 5.1]{C.G.O.}).
  
In \cite{C.G.O.} Conti, Garroni and Ortiz  have shown that for  general dislocation distributions $\mu$ satisfying an appropriate diluteness conditions (see Definition \ref{def_dilute2}) the rescaled energy
$$
 \frac{1}{|\log\varepsilon|}\int_{\Om_\varepsilon(\mu)}\frac12\mathbb{C}\beta:\beta\ dx\quad\text{for $\curl\beta=\mu$ in $\Om$},$$
$\Gamma$-converges to the line-tension energy
\begin{equation}\label{intro3}
\int_\gamma\tilde\Psi_0(b,t)\,d\mathcal{H}^1,
\end{equation}
where $\tilde\Psi_0$ is the $\mathcal{H}^1$-elliptic envelope of $\Psi_0$ and is obtained by a relaxation procedure in \cite{C.G.M.}. In particular the relaxation process may produce microstructures at mesoscopic scales.

The above mentioned result is one of the first rigorous asymptotic analysis of the elastic energy induced by dislocations in a quite general three dimensional framework (see also \cite{Hu}). Previous results were indeed confined to special geometries where the models could be treated in a two-dimensional framework and studied by means of $\Gamma$-convergence in different relevant energetic regimes. In these reduced models dislocations can be seen either as points in the cross section of a cylindrical domain (see \cite{CL, GLP, DLGP}) with a strong similarity with the case of filaments of currents in superconductors (\cite{SS,SS1,Je}) or as line confined to a single slip plane and their energy described by nonlocal phase field models (generalising the Peierls Nabarro model \cite{K.C.O.,K.O.,GM,CG,C.G.M.,C.G.MU.1}). 

In the context of two dimensional models with point singularities several authors have also studied fully discrete models for screw dislocations (\cite{Pons, ACP}) deriving the same asymptotic obtained by means of the semi-discrete models (which then turns out to be a robust approximation of the discrete framework) and considering further asymptotic expansions able to capture the interaction and to drive the evolution of systems of dislocations (\cite{ADLGP}, \cite{HuO1}, \cite{HuO2}, see also \cite{MPS} and \cite{MRS}).

The classical semi-discrete models of dislocations, including the ones mentioned above, are based on the assumption of the elastic far field be small and therefore of the corresponding elastic energy be linear, which then confines the analysis of systems with dilute dislocations in single grain bodies. A step forward in the direction of having more flexible models that in principle may be suitable for the description of grain boundaries effects or of large deformations in thin materials is to consider nonlinear energies which may incorporate invariance under rigid rotations. 
This was first done for non-coherent interfaces in rods by \cite{MP1,MP2,FPP1} where the nonlinearity of the energy, with $p$ growth and $p<2$, is also used as an alternative regularisation of the core region (see also  \cite{SV1,SV2}). 

In a two dimensional setting the use of a geometric nonlinear energy
has shown already quite interesting features (see \cite{s.z.,MSZ}). The prototype  energy is the following
$$
\int_\omega \dist^2(\beta, SO(2))dx,
$$
with $\omega$ representing the two dimensional cross section of the cylindrical body and the field $\beta \in L^1(\omega;\R^{2\times2})$ an incompatible field, describing the local distortion of the body in the context of plane elasticity,  with curl concentrated on a sum of Dirac masses representing the distribution of dislocations ($\curl\beta=\eps\sum_{i=1}^N b_i\delta_{x_i}$). The analysis performed in \cite{s.z.,MSZ} required an assumption of separation of scales (then removed in \cite{JJ}) that prevents dislocations for being too close in the scale of $\eps$.
A crucial ingredient is a rigidity estimate for incompatible fields proved in \cite{MSZ} which is the nonlinear counterpart of the Korn's inequality for incompatible fields proved in \cite{GLP}. In dilute regimes  (in particular in the logarithmic scale) the rigidity estimate shows that the strain is close to a given rotation and therefore in the asymptotics as the lattice spacing tends to zero the rescaled energy linearises around such a rotation
$\Gamma$-converging as $\varepsilon$ goes to zero, to 
\begin{equation}
    \label{intro5}
    \int_\omega\frac{1}{2}\mathbb{C}\beta:\beta dx+\sum_i\psi(Q^Tb_i)\quad\text{for}\quad\curl\beta=0,\,\,Q\in SO(2),
\end{equation}
where $\mathbb{C}=\frac{\partial^2W}{\partial F^2}(I)$ and $\psi$ is the same self-energy which is found in the asymptotic of the linear, above mentioned, semi-discrete two dimensional models. 
We stress here that the result in the limit is still single grain and, up to a rigid rotation, coincides with the one obtained in the linear case. Nevertheless the analysis can be  pushed to different regimes and in principle this models can allow for multi-grain structures (see  \cite{LL} for the derivation of the Shockley Read formula for small angles grain boundaries).

In the present paper we will combine  two important features: the geometric nonlinearity and a full three dimensional geometry. We will assume, as in \cite{C.G.O.}, that dislocations are separated in the sense of Definition \ref{def_dilute2} and derive via $\Gamma$-convergence the line-tension energy for a general 3d dimensional distribution of dislocations. 
We consider an energy density $W$ with quadratic growth and invariant by rigid rotations, then behaving  as the $\dist^2(\cdot,SO(3))$, and to any distribution of dislocations $\mu$ of the form \eqref{measureintro} in $\Om$ we associate the rescaled energy
\begin{equation}\label{model-3d}
\mathcal{E}_\varepsilon(\beta):=\frac{1}{\varepsilon^2|\log\varepsilon|}\int_\Om W(\beta)dx,
\end{equation}
where the incompatible deformation field $\beta\colon\Om\rightarrow\R^{3\times3}$ satisfies
$
\curl\beta=\varepsilon \mu*\varphi_\varepsilon.
$
The $\Gamma$-convergence analysis shows that the limit functional takes the form
\begin{align}
\int_\Omega\frac{1}{2}\mathbb{C}\beta:\beta\ dx+\int_\gamma\tilde{\Psi}_0(Q^Tb,t)d\mathcal{H}^1,
    \label{functionallimit}
\end{align}
where  $(\beta,Q)\in L^2(\Omega;\mathbb{R}^{3\times3})\times SO(3)$, $\curl\beta=0$ in $\Omega$, $\tilde\Psi_0$ is the self energy of the linear case (see \eqref{intro3}), and $\gamma$ is the support of the limit dislocation density $\mu=b\otimes t\mathcal{H}^1\res\gamma$. 
The precise statements of the results are given in Section \ref{sec1}..

We stress that in the general three dimensional model the geometry of line dislocations makes the problem of removing the separation scales from the definition of admissible configuration, as it was done in the two dimensional case \cite{JJ}, substantially more complex (as well as for the linear case in \cite{C.G.O.}). In particular the crucial use of the rigidity estimate here (and of Korn's inequality in the linear context), does not permit to adopt directly the strategy used in the context of Ginzburg Landau (\cite{ABO}). Therefore how to obtain a general compactness result and a sharp lower bound for the energy in \eqref{model-3d} is still an open problem.

Finally we also point out that the combination of the three-dimensional framework with the geometric nonlinearity is far from being a straightforward adaptation of the techniques used in \cite{C.G.O.} and in \cite{s.z.}. In particular the proof of the lower bound were linearisation, concentration, and relaxation must be dealt all at once, requires to show precise quantitative estimates for the optimal energy of straight dislocations. A key step is then the introduction in Section \ref{sec2} of an auxiliary cell problem formula which introduces an extra parameter and allows to separate the linearisation (guaranteed by the rigidity estimate and the subsequent compactness) from the relaxation process (see Subsection \ref{aux-cell}).

\section{The model and the results}\label{sec1}      
In what follows we denote by $\Omega$ the material reference configuration, that is a simply connected, bounded domain in $\mathbb{R}^3$ {with boundary of class $C^2$}. \\
We identify a distribution of dislocations as a matrix valued measure supported on a one dimensional subset of $\Omega$. In this framework the topological nature of these defects can be easily translated in the property for these measures to be divergence-free in the sense of distributions.
Precisely the set of \textit{admissible dislocation densities} is 
 the set of all divergence-free bounded measures $\mu\in\mathcal{M}(\Omega;\mathbb{R}^{3\times3})$ of the form
\begin{equation*}
    \mu=b\otimes t\mathcal{H}^1\res\gamma,
\end{equation*}
with $\gamma$ a 1-rectifiable subset of $\Omega$,  $t\colon\gamma\to S^2$ its tangent vector, and $b\in L^1(\gamma;\mathcal{B};\mathcal{H}^1\res\gamma)$ the Burgers vector field. Here $\mathcal{B}\subset\mathbb{R}^3$ is a discrete lattice and represents the space of \textit{admissible renormalised Burgers vectors} (e.g. in the case of the cubic crystal $\mathcal{B}=\mathbb{Z}^3$). In particular without loss of generality we will assume that
\begin{equation}\label{normal}
\min\{|b|:\ b\in \mathcal{B}\}=1.
\end{equation}

The divergence-free conditions reads as
\begin{equation*}
    \int_\gamma b\cdot(D\phi)td\mathcal{H}^1=0,
\end{equation*}
for all $\phi\in C_0^\infty(\Omega;\mathbb{R}^3)$.
We will denote this set of admissible dislocation densities as $\mathcal{M}_\mathcal{B}(\Omega)$, i.e.,
\begin{align}
\mathcal{M}_\mathcal{B}(\Omega):=\Bigl\{\mu\in\mathcal{M}(\Omega;\R^{3\times3}):\ & \mu=b\otimes t\mathcal{H}^1\res\gamma,\ \notag\\ &\Div\mu=0,\ b\in \mathcal{B},\ \gamma \ 1-\hbox{rectifiable} \Bigr\}.
\end{align}

\begin{oss} It can be seen (see for instance \cite[Theorem 2.5]{C.G.M.}) that for each $\mu\in\mathcal{M}_\mathcal{B}(\Omega)$ $\gamma$ must be the union of countable number of Lipschitz curves with no endpoints in $\Omega$, $b$ must be constant on each connected component of $\gamma$ away from branching points, and in each branching point the oriented sum of Burgers vector must be zero.
\end{oss}
In order to associate a semi-discrete elastic energy to a given distribution of dislocations, as already mentioned, we need to regularise the problem inside the core, i.e. at scale $\varepsilon$ proportional to the lattice spacing. Among different types of regularisations (that in our analysis give rise to the same asymptotics, see \cite{C.G.O.}) we decide to introduce a mollification kernel which has the effect of spreading the mass of  $\mu$ in a neighbourhood of order $\varepsilon$ of its support and therefore smearing out the singularity of the corresponding strain. Then we define the class
of admissible strains associated with any $\mu\in\mathcal{M}_\mathcal{B}(\Omega)$ as
\begin{equation}\label{admiss_measures}
    \mathcal{AS}_\varepsilon(\mu):=\bigl\{\beta\in L^2(\Omega;\mathbb{R}^{3\times3})\,:\,\curl\beta=\varepsilon\tilde{\mu}*\varphi_\varepsilon\quad\text{in }\quad\Omega
      \bigr\},
\end{equation}
where $\tilde \mu\in \mathcal{M}_\mathcal{B}(\R^3)$ is an extension of $\mu$, that is $\tilde\mu\res\Om=\mu$,  $\varphi_\varepsilon(x):=\varepsilon^{-3}\varphi(x/\varepsilon)$ is a mollifier, and $\curl\beta$ is intended in a distributional sense. 
To simplify the arguments we will assume that $\varphi\le C \chi_{B_{1}(0)}$, so that $\varphi_\varepsilon(x)\leq c\frac{\chi_{B_{\varepsilon}(0)}}{|B_{\varepsilon}(0)|}$.

Note that the scaling $\varepsilon$ of the dislocation density $\tilde\mu*\varphi_\varepsilon$ reflects the fact that dislocations are defects at the atomic scale, and the support of $\tilde\mu*\varphi_\eps$ represents the dislocation core. \\

To any admissible pair $(\mu,\beta)\in\mathcal{M}_\mathcal{B}(\Omega)\times\mathcal{AS}_\varepsilon(\mu)$, we associate an energy of the form 
\begin{equation}
\label{energy1}
\mathcal{E}_\varepsilon(\mu,\beta):=\int_\Omega W(\beta)dx,
\end{equation}
where $W\colon\mathbb{R}^{3\times3}\to[0,+\infty]$ satisfies the classical assumptions for the geometrically nonlinear elastic setting, that is
\begin{enumerate}[(i)]
    \item $W$ is $C^1$ and $C^2$ in a neighbourhood of $SO(3)$;
    \item $W(I)=0$ (stress-free reference configuration);
    \item $W(RF)=W(F)$ for every $R\in SO(3)$ and $F\in\mathbb{R}^{3\times3}$ (frame indifference);
    \item there exist constants $C_1$, $C_2>0$ such that for every $F\in\mathbb{R}^{3\times3}$
    \begin{equation}
        \label{quadraticgrowth}
        C_1\dist^2(F,SO(3))\le W(F)\le C_2\dist^2(F,SO(3));
    \end{equation}
    \item there exists a constant $C>0$ such that for every $F\in\mathbb{R}^{3\times3}$
    \begin{equation}
        \label{jacob}
        |\frac{\partial W}{\partial F} (F)|\le C\dist(F,SO(3)),
    \end{equation}
    where $\frac{\partial W}{\partial F} (F)\in\mathbb{R}^{3\times3}$ is the Jacobian matrix of $W$ in $F$. 
\end{enumerate} 
The main goal of this paper is to study the asymptotic behaviour of the energy in a mesoscopic scale, i.e., a scale at which lines are still visible and in the asymptotics we recover a line tension. Under our assumption for the energy the natural rescaled functional is given by
$$ \frac{1}{\varepsilon^2|\log\varepsilon|}\mathcal{E}_\varepsilon(\mu,\beta)$$
(see also \cite{s.z.}). As in \cite{C.G.O.}, in order to perform the analysis, we need to assume a diluteness condition for the admissible dislocation densities (the analogous of this condition in 2d was also considered in \cite{s.z.} and recently removed in \cite{JJ}).

\begin{defin}\label{def_dilute2} Given two positive parameters $\alpha, h>0$, a dislocation measure $\mu\in\mathcal{M}_\mathcal{B}(\Omega)$, with $\Omega\subset\mathbb{R}^3$ open, is said to be $(h,\alpha)$-dilute if there are finitely many closed segments $\gamma_j\subset\Omega$ and vectors $b_j\in\mathcal{B}$, $t_j\in S^2$ (with $t_j$ tangent to $\gamma_j$) such that
\begin{equation*}
    \mu=\sum_jb_j\otimes t_j\mathcal{H}^1\res\gamma_j,
\end{equation*}
where the closed segments $\gamma_j$ satisfy the properties:
\begin{enumerate}[(a)]
    \item each $\gamma_j$ has length at least $h$;
    \item if $\gamma_j$ and $\gamma_k$ are disjoint then their distance is at least $\alpha h$;
    \item if the segments $\gamma_j$ and $\gamma_k$ are not disjoint then they share an endpoint, and the angle between them is at least $\alpha$.
\end{enumerate}
The set of $(h,\alpha)$-dilute measures is denoted by $\mathcal{M}_\mathcal{B}^{h,\alpha}(\Omega)$. \\
Moreover  we say that a measure $\mu\in\mathcal{M}_\mathcal{B}^{h,\alpha}(\Omega)$ is a $(h,\alpha)$-dilute  in $\bar\Om$ if also the following condition holds:

\begin{itemize}
	\item[(d)] If $\gamma_j\cap \partial\Om\neq\emptyset$, then this intersection consists of a single point $x_j$ and the angle between $\gamma_j$ and the tangent plane to $\partial\Om$ at $x_j$ is at least $\alpha$. Moreover, if  $\gamma_j\cap \partial\Om=\emptyset$ the distance between $\gamma_j$ and $\partial\Om$ is at least $h\alpha$.
\end{itemize}
We will denote this space by ${\mathcal M}_\mathcal{B}^{h,\alpha}(\bar\Om)$.
\end{defin}
Condition (d) in Definition \ref{def_dilute2} essentially guarantees that any $\mu\in\mathcal{M}^{h,\alpha}_\mathcal{B}(\Om)$ can be extended to $\tilde\mu\in\mathcal{M}_\mathcal{B}^{h,\alpha}(\R^3)$ still dilute. This is clearly the case when $\Om$ is half space and the extension is obtained by reflection (the general case is more delicate but can be obtained with some suitable generalisation stated in Lemma \ref{extension_2}). Nevertheless, we point out that this assumption is not restrictive. A dilute extension if $\Omega$ is of class $C^1$ can be always obtained with ad hoc construction that we do not give here.\\

In addition we choose the diluteness parameters $h$ and $\alpha$ much larger than the core radius $\varepsilon$, namely 
\begin{equation}
    \lim_{\varepsilon\to0}\frac{\log(1/(\alpha_\varepsilon h_\varepsilon))}{|\log\varepsilon|}= \lim_{\varepsilon\to0}\alpha_\varepsilon=\lim_{\varepsilon\to0}h_\varepsilon=0.\label{diluteness1}
\end{equation}
For technical reasons (see Proposition \ref{stimaL2}) we also require the following stronger diluteness condition
\begin{equation}\label{diluteness2}
	\alpha_\varepsilon^4h_\varepsilon^6|\log\varepsilon|>1.
\end{equation}


With this choice of the diluteness parameters we will show that the rescaled functionals 
\begin{equation}\label{functional1}
    \mathcal{F}_\varepsilon(\mu,\beta):=\begin{cases}
  \displaystyle \frac{1}{\varepsilon^2|\log\varepsilon|}\mathcal{E}_\varepsilon(\mu,\beta)&\text{if $(\mu,\beta)\in \mathcal{M}_\mathcal{B}^{h_\varepsilon,\alpha_\varepsilon}(\bar\Omega)\times\mathcal{AS}_\varepsilon(\mu)$}\\
   +\infty&\text{otherwise,}
    \end{cases}
\end{equation}
 $\Gamma$-converges to  the following functional
\begin{equation}    \label{functional2}
    \mathcal{F}_0(\mu,\beta,Q):= \begin{cases}
  \displaystyle \int_\Omega\frac{1}{2}\mathbb{C}\beta:\beta\ dx+\int_\gamma\tilde{\Psi}_0(Q^Tb,t)d\mathcal{H}^1&\text{if $(\mu,\beta,Q)\in \mathcal{AS}$}
   \\ +\infty&\text{otherwise,}
    \end{cases}
\end{equation}
where 
\begin{equation*}
	\mathcal{AS}:=\{(\mu,\beta,Q)\in\mathcal{M}_\mathcal{B}(\Om)\times L^2(\Om;\R^{3\times3})\times SO(3):\,\mu=b\otimes t\mathcal{H}^1\res\gamma,\,\curl\beta=0\,\text{in $\Om$}\}.
\end{equation*}
Here $\mathbb{C}:=\frac{\partial^2W}{\partial F^2}(I)$ is the Hessian of $W$ at the identity, $A:B=\sum_{i,j}A_{ij}B_{ij}$ denotes the Euclidean scalar product of matrices,
while $\tilde{\Psi}_0$ is the $\mathcal{H}^1$-elliptic envelope of $\Psi_0$, defined as
\begin{multline}
    \tilde{\Psi}_0(b',t):=\inf\biggl\{\int_\gamma\Psi_0(\theta(x),\tau(x))d\mathcal{H}^1(x):\,\nu=\theta\otimes\tau\mathcal{H}^1\res\gamma\in\mathcal{M}_{\mathcal{B}}(B_{1/2}(0)),\\
    \supp(\nu-b'\otimes t\mathcal{H}^1\res(\mathbb{R}t\cap B_{1/2}(0)))\subset\subset B_{1/2}(0)\biggr\}.
    \label{relaxedenergy}
\end{multline}
The function
$\Psi_0$ is defined in \eqref{sself} and \eqref{selfen1} below and  represents the self-energy per unit length of a straight infinite dislocation. Note that the dependence of the limit functional on a rotation $Q$ is due to the geometric nonlinear nature of the energy, namely to its frame indifference, and comes out from a Taylor expansion of the energy density $W$ near a constant rotation $Q$.\\
 
 The first important result concerns a compactness property for the dislocation measures and associated fields with equibounded energies.

\begin{teo}[Compactness]
\label{compactness}
 Let $\varepsilon_j\to0$ and $(h_{\varepsilon_j},\alpha_{\varepsilon_j})$  be as in \eqref{diluteness1}. If  $(\mu_j,\beta_j)\in{\mathcal{M}}_\mathcal{B}^{h_{\varepsilon_j},\alpha_{\varepsilon_j}}(\bar\Om)\times\mathcal{AS}_{\varepsilon_j}(\mu_j)$ is a sequence such that $\mathcal{F}_{\varepsilon_j}(\mu_j,\beta_j)\le C$, for some $C>0$, then the following hold:
\begin{enumerate}[(i)]
\item
There exists a measure $\mu\in\mathcal{M}_\mathcal{B}(\Om)$ such that, up to subsequence,
$$\mu_j\stackrel{*}{\rightharpoonup}\mu\quad\text{in}\quad \mathcal{M}_\mathcal{B}(\Om);$$

\item There exist a sequence $\{Q_j\}\subset SO(3)$ and $\beta\in L^2(\Om;\R^{3\times3})$ with $\curl\beta=0$ in $\Om$ such that, up to subsequence,
$$ \frac{Q_j^T\beta_j-I}{\varepsilon_j\sqrt{|\log\varepsilon_j|}}\chi_{\Om_j}\rightharpoonup\beta\quad\text{in}\quad L^2(\Om;\R^{3\times3})\quad\text{and}\quad Q_j\to Q\in SO(3),$$
where $\chi_{\Om_j}$ denotes the characteristic function of $\Om_j\colon=\{x\in\Omega:\dist(x,\partial\Omega)>\varepsilon_j\}$.
  
\end{enumerate}
\end{teo}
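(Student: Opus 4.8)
\emph{Overview.} The plan is to convert the energy bound into an $L^2$ control of $\dist(\beta_j,SO(3))$ via \eqref{quadraticgrowth}, to bound the dislocation mass by a localized self-energy estimate, and to linearise by means of a three-dimensional rigidity estimate for incompatible fields; the constraint $\curl\beta=0$ will then follow by testing against compactly supported fields.

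\emph{Proof of (i).} From $\mathcal F_{\eps_j}(\mu_j,\beta_j)\le C$ and \eqref{quadraticgrowth} one gets $\int_\Om\dist^2(\beta_j,SO(3))\,dx\le C\eps_j^2|\log\eps_j|$. I would first prove that $|\mu_j|(\Om)\le C$: around each segment $\gamma_{j,i}$ of $\mu_j$ consider the tube of radius comparable to $\alpha_{\eps_j}h_{\eps_j}$; by \eqref{diluteness1}--\eqref{diluteness2} these tubes are pairwise disjoint and contained in $\Om$, and on each of them the classical lower bound for the elastic energy of a straight dislocation (the self-energy estimate recalled in the Introduction, applied after the local rigidity estimate has put $\beta_j$ near a rotation on the tube) gives a contribution $\ge c\,|b_{j,i}|^2\,\ell_{j,i}\,\eps_j^2|\log\eps_j|$. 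Summing over $i$ and using $|b_{j,i}|\ge 1$ from \eqref{normal} yields $\sum_i\ell_{j,i}\le C$, hence $|\mu_j|(\Om)\le C$. Thus $\{\mu_j\}$ is bounded in $\mathcal M(\Om;\R^{3\times3})$ and, up to a subsequence, $\mu_j\weakstar\mu$. The constraint $\Div\mu_j=0$, being linear and tested on $C_0^\infty(\Om;\R^3)$, passes to the limit, so $\Div\mu=0$; the structure $\mu=b\otimes t\,\mathcal H^1\res\gamma$ with $\gamma$ $1$-rectifiable and $b\in\mathcal B$ follows from the weak-$*$ closure of the class of divergence-free $\mathcal B$-valued one-dimensional measures of equibounded mass (a Federer--Fleming-type compactness, see e.g. \cite{C.G.M.}), the discreteness of $\mathcal B$ preventing loss of mass to a diffuse limit. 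Hence $\mu\in\mathcal M_{\mathcal B}(\Om)$.

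\emph{Proof of (ii).} On $\Om_j$ I would apply the three-dimensional rigidity estimate for incompatible fields (the analogue of the $2$d estimate of \cite{MSZ}): there exists $Q_j\in SO(3)$ such that
\begin{equation*}
\|\beta_j-Q_j\|_{L^2(\Om_j)}^2\le C\Big(\|\dist(\beta_j,SO(3))\|_{L^2(\Om)}^2+|\curl\beta_j|(\Om)^2\,|\log\eps_j|\Big).
\end{equation*}
Since $|\curl\beta_j|(\Om)=\eps_j\,|\tilde\mu_j*\varphi_{\eps_j}|(\Om)\le\eps_j\,|\tilde\mu_j|(\R^3)\le C\eps_j$ (the dilute extension of $\mu_j$ has mass comparable to $|\mu_j|(\Om)$, by (i)), the right-hand side is $\le C\eps_j^2|\log\eps_j|$; and as $|Q_j^T\beta_j-I|=|\beta_j-Q_j|$, this gives
\begin{equation*}
\Big\|\frac{Q_j^T\beta_j-I}{\eps_j\sqrt{|\log\eps_j|}}\Big\|_{L^2(\Om_j)}\le C .
\end{equation*}
Therefore the fields $\frac{Q_j^T\beta_j-I}{\eps_j\sqrt{|\log\eps_j|}}\chi_{\Om_j}$ are bounded in $L^2(\Om;\R^{3\times3})$ and, up to a subsequence, converge weakly to some $\beta\in L^2(\Om;\R^{3\times3})$; and, along a further subsequence, $Q_j\to Q\in SO(3)$ by compactness of $SO(3)$. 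To see that $\curl\beta=0$, fix $\phi\in C_0^\infty(\Om;\R^3)$: for $j$ large $\supp\phi\subset\subset\Om_j$, so the truncation by $\chi_{\Om_j}$ contributes no boundary term and, using $\curl\beta_j=\eps_j\tilde\mu_j*\varphi_{\eps_j}$,
\begin{equation*}
\Big\langle\curl\Big(\frac{Q_j^T\beta_j-I}{\eps_j\sqrt{|\log\eps_j|}}\chi_{\Om_j}\Big),\phi\Big\rangle=\frac{1}{\sqrt{|\log\eps_j|}}\big\langle Q_j^T(\tilde\mu_j*\varphi_{\eps_j}),\phi\big\rangle\longrightarrow0,
\end{equation*}
because $|\tilde\mu_j*\varphi_{\eps_j}|(\Om)$ is bounded while $\sqrt{|\log\eps_j|}\to\infty$; the left-hand side converges to $\langle\curl\beta,\phi\rangle$ by the weak $L^2$ convergence, so $\curl\beta=0$ in $\Om$.

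\emph{Main obstacle.} The crux is the rigidity step: one needs a three-dimensional rigidity estimate for incompatible fields whose error term scales compatibly with the logarithmic regime, and --- since the natural \emph{local} version of such an estimate only produces a rotation field that is piecewise constant on a decomposition of $\Om$ adapted to the dilute geometry --- one must upgrade it to a \emph{single} constant $Q_j$. This is precisely where the diluteness conditions \eqref{diluteness1}--\eqref{diluteness2} intervene: they force the incompatibility, hence the mismatch between the rotations of adjacent pieces, to be negligible on the scale $\eps_j\sqrt{|\log\eps_j|}$, so that $\Om$ remains a single grain; they also explain the cut-off $\chi_{\Om_j}$, which discards the (merely $L^2$, possibly large) part of $\beta_j$ contained in the dislocation cores reaching $\partial\Om$. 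The localized self-energy lower bound used in (i), and the weak-$*$ closure of $\mathcal M_{\mathcal B}(\Om)$, are the remaining --- more standard --- ingredients.
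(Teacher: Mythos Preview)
Your proof of (i) matches the paper's approach: disjoint tubes around the segments, a cell-problem lower bound on each (the paper uses \eqref{consequence1}, derived from the nonlinear cell formula of Section~\ref{sec2}), summation to bound $\sum_i|b_j^i|^2\mathcal H^1(\gamma_j^i)$, and weak-$*$ closure of $\mathcal M_{\mathcal B}(\Omega)$ via \cite{C.G.M.}.

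For (ii) your route differs substantially from the paper's, and this is where the gap lies. You invoke a three-dimensional rigidity estimate for incompatible fields, applied directly to $\beta_j$ on $\Omega_j$. No such estimate is established or used in the paper; the form you write (with a $|\log\eps_j|$ weight on the curl term) is neither the MSZ form nor otherwise cited, and you yourself flag this as the unresolved ``crux''. The paper instead devotes all of Section~\ref{sec3} to Proposition~\ref{stimaL2}: using the extension Lemma~\ref{extension_2} and pointwise bounds for deformed polyhedral measures (Lemma~\ref{lemma_deform}, Lemma~\ref{Stima_Palla}) together with Lemmas~\ref{estimatefromabove}--\ref{estimatefromabovemoll}, it builds an auxiliary field $\hat\eta_j\in L^2(\Omega;\R^{3\times3})$ with $\curl\hat\eta_j=\mu_j*\varphi_{\eps_j}$ in $\Omega_j$ and $\|\hat\eta_j\|_{L^2(\Omega)}^2\le c|\log\eps_j|$. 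Then $\beta_j-\eps_j\hat\eta_j=\nabla u_j$ is a genuine gradient on $\Omega_j$, the \emph{classical} Friesecke--James--M\"uller rigidity applies to $u_j$ (with constant uniform in $j$, since the $\Omega_j$ are uniform bi-Lipschitz images of $\Omega$), and the triangle inequality gives $\|\beta_j-Q_j\|_{L^2(\Omega_j)}^2\le C\eps_j^2|\log\eps_j|$. This indirect route is precisely designed to avoid a global 3D incompatible-field rigidity. Separately, your bound $|\tilde\mu_j|(\R^3)\le C$ is unjustified: the extension $\tilde\mu_j$ in the definition of $\mathcal{AS}_{\eps_j}(\mu_j)$ is arbitrary, as the paper remarks explicitly after the theorem; one only controls $|\curl\beta_j|(\Omega_j)\le\eps_j|\mu_j|(\Omega)$, which is exactly why the cut-off $\chi_{\Omega_j}$ is needed.
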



Since in the definition of admissible strains we do not specify how do we extend the measure outside $\Omega$ in order to define the regularised density, we cannot expect a control on the total mass of $\tilde\mu_j*\varphi_{\varepsilon_j}$. This is the reason why in the compactness of the strains (property (ii) of the above result) we need to remove a neighbourhood of the boundary. Whether the control on the energy of the $\beta$'s is enough to deduce compactness without any further assumption on the extension is not clear to us. 

In order to recover compactness in the all of $\Om$ we can fix a specific extension operator $ T\colon\mathcal{M}_\mathcal{B}(\Om)\to\mathcal{M}_\mathcal{B}(\R^3)$, and replace the class of admissible strains $\mathcal{AS}_\varepsilon(\mu)$ defined in \eqref{admiss_measures} with the class
\begin{equation}\label{ASstar}
\mathcal{AS}^\star_\varepsilon(\mu):=\{\beta\in L^2(\Om;\R^{3\times3})\,:\,\curl\beta=\varepsilon T\mu*\varphi_\varepsilon\,\text{in $\Om$}\}.
\end{equation} There are many possible choices of the extension operator $T$, one that will work is defined by Lemma \ref{extension_2}.
\begin{oss}
	If in Theorem \ref{compactness} we assume that $\beta_j$ belongs to $\mathcal{AS}^\star_\varepsilon(\mu_j)$ property (ii)  in Theorem \ref{compactness} can be replaced by the following 
	\begin{itemize}
		\item[(ii')] There exist a sequence $\{Q_j\}\subset SO(3)$ and $\beta\in L^2(\Om;\R^{3\times3})$ with $\curl\beta=0$ such that, up to subsequence,
		\begin{equation}\label{compactness2}
		\frac{Q_j^T\beta_j-I}{\varepsilon_j\sqrt{|\log\varepsilon_j|}}\rightharpoonup\beta\quad\text{in $L^2(\Om;\R^{3\times3})$}\quad\text{and}\quad Q_j\to Q\in SO(3). 
		\end{equation}
	\end{itemize}
\end{oss}
\begin{defin}\label{def_conv}
We say that $(\mu_j,\beta_j)\in {\mathcal M}^{h_{\varepsilon_j},\alpha_{\varepsilon_j}}_{\mathcal B}(\bar\Om)\times \mathcal {AS}_{\varepsilon_j}(\mu_j)$ converges to $(\mu,\beta,Q)\in \mathcal M_{\mathcal B}(\Om)\times L^2(\Om;\R^{3\times3})\times SO(3),$ if
(i) and (ii) of Theorem \ref{compactness} hold. 
\end{defin}
\begin{teo}[$\Gamma$-convergence]
\label{gammalimit}
The energy functional $\mathcal{F}_\varepsilon$ $\Gamma$-converges to $\mathcal{F}_0$ in the following sense.
\begin{enumerate}[(i)]
\item (Lower Bound). For any sequence $\varepsilon_j\to0$ and any $(\mu_j,\beta_j)\in {\mathcal{M}}_\mathcal{B}^{h_{\varepsilon_j},\alpha_{\varepsilon_j}}(\bar\Omega)\times\mathcal{AS}_{\varepsilon_j}(\mu_{\varepsilon_j})$ converging to $(\mu,\beta,Q)$ in the sense of Definition \ref{def_conv}, one has  
    \begin{equation*}
        \mathcal{F}_0(\mu,\beta,Q)\le\liminf_{j\to\infty}\mathcal{F}_{\varepsilon_j}(\mu_j,\beta_j);
    \end{equation*}
    \item (Upper Bound). For any $(\mu,\beta,Q)\in\mathcal{M}_\mathcal{B}(\Omega)\times L^2(\Omega;\mathbb{R}^{3\times3})\times SO(3)$ with $\curl\beta=0$ and any sequence $\varepsilon_j\to0$ there exists a sequence $(\mu_j,\beta_j)\in {\mathcal{M}}_\mathcal{B}^{h_{\varepsilon_j},\alpha_{\varepsilon_j}}(\bar\Omega)\times\mathcal{AS}_{\varepsilon_j}(\mu_{\varepsilon_j})$ converging to $(\mu,\beta,Q)$ in the sense of Definition \ref{def_conv} such that
 \begin{equation*}
     \limsup_{j\to\infty}\mathcal{F}_{\varepsilon_j}(\mu_j,\beta_j)\le\mathcal{F}_0(\mu,\beta,Q).
 \end{equation*}
\end{enumerate}
\end{teo}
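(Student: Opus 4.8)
\emph{Plan.} I would prove the two $\Gamma$-convergence inequalities separately, in each case isolating the bulk (linearised elastic) term from the concentrated line-tension term by a localisation around the dislocation lines. The two structural tools are the nonlinear rigidity estimate for incompatible fields (the three-dimensional analogue of the Korn inequality used in \cite{C.G.O.}) together with the compactness statement of Theorem \ref{compactness}, and the quantitative analysis of straight dislocations carried out through the auxiliary cell problem of Section \ref{sec2}, whose additional parameter decouples the linearisation from the relaxation.

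\emph{Lower bound.} Given $(\mu_j,\beta_j)$ converging to $(\mu,\beta,Q)$ with equibounded energy, after passing to a subsequence realising the $\liminf$ I would fix a small radius $\rho>0$ and split $\Omega$ into the union $N_\rho$ of the $\rho$-tubes around the segments of the (localised) limit measure and its complement. On $\Omega\setminus N_\rho$ the strain is asymptotically compatible: by frame indifference $W(\beta_j)=W(Q_j^T\beta_j)$, the quadratic growth \eqref{quadraticgrowth} and the rigidity estimate force $Q_j^T\beta_j\to I$ in $L^2$ away from the cores while the rescaled fields converge weakly to $\beta$ by Theorem \ref{compactness}(ii); a Taylor expansion of $W$ at $I$, controlled by \eqref{jacob}, followed by weak $L^2$ lower semicontinuity of $F\mapsto\int\tfrac12\mathbb{C}F:F$, gives the bulk inequality on $\Omega\setminus N_\rho$. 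On each tube I would localise around the limit segment $\gamma_k$ and slice orthogonally to its direction: on a.e. slice one is reduced to a planar incompatible field whose curl is a mollified point mass $\varepsilon_j b_k$, the planar rigidity estimate again linearises the energy, and -- after peeling off the leading logarithmic term and recognising the linear self-energy -- the auxiliary cell problem of Subsection \ref{aux-cell} yields $\liminf_j \varepsilon_j^{-2}|\log\varepsilon_j|^{-1}\int_{N_\rho}W(\beta_j)\ge \int_\gamma\tilde\Psi_0(Q^Tb,t)\,d\mathcal{H}^1$. Adding the two contributions and letting $\rho\to0$ (the bulk weight of $N_\rho$ vanishes, the line term is $\rho$-independent) concludes.

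\emph{Upper bound.} By a diagonal and density argument it is enough to produce a recovery sequence when $\mu$ is a finite union of segments and, by the definition \eqref{relaxedenergy} of the $\mathcal{H}^1$-elliptic envelope together with the constructions of \cite{C.G.M.}, when $\tilde\Psi_0$ is (almost) attained by explicit microstructures on a scale much larger than $\varepsilon_j$. For such $\mu$ I would pick a dilute $\mu_j\in\mathcal{M}_\mathcal{B}^{h_{\varepsilon_j},\alpha_{\varepsilon_j}}(\bar\Omega)$ with $\mu_j\weakstar\mu$ and set $\beta_j:=Q(I+\varepsilon_j\sqrt{|\log\varepsilon_j|}\,\beta+\eta_j)$, using a potential of $\beta$ (legitimate since $\curl\beta=0$ and $\Omega$ is simply connected) away from the cores and letting $\eta_j$ be the superposition of the optimal strains of the straight cell problem of \cite{C.G.O.} around the segments of $\mu_j$, corrected by a curl-free field so that $\curl\beta_j=\varepsilon_j\tilde\mu_j*\varphi_{\varepsilon_j}$ holds exactly; the diluteness conditions \eqref{diluteness1}--\eqref{diluteness2} guarantee that the patches do not interact and that $(\mu_j,\beta_j)$ is admissible and converges to $(\mu,\beta,Q)$ in the sense of Definition \ref{def_conv}. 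Then $W(\beta_j)=W(I+\varepsilon_j\sqrt{|\log\varepsilon_j|}\beta+\eta_j)$ by frame indifference, and a Taylor expansion controlled by \eqref{jacob} splits the total energy into $\varepsilon_j^2|\log\varepsilon_j|\int_\Omega\tfrac12\mathbb{C}\beta:\beta$, the core term $\varepsilon_j^2|\log\varepsilon_j|\int_\gamma\tilde\Psi_0(Q^Tb,t)\,d\mathcal{H}^1$, and cross and error terms that are $o(\varepsilon_j^2|\log\varepsilon_j|)$ by the quantitative estimates on straight dislocations and the scaling of $h_{\varepsilon_j}$ and $\alpha_{\varepsilon_j}$; this gives the desired $\limsup$ inequality.

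\emph{Main difficulty.} The crux is the lower bound on the tubes, where linearisation (via rigidity), concentration of the logarithmic energy, and the $\mathcal{H}^1$-relaxation must all be handled at once; the auxiliary cell problem of Section \ref{sec2}, carrying the extra parameter that separates the rotation from the relaxed self-energy, together with sharp quantitative estimates for the optimal energy of straight dislocations, is exactly what makes this step go through. A secondary but nontrivial issue in the upper bound is to enforce the curl constraint exactly on the patched field while keeping it dilute and avoiding spurious interaction energy between nearby segments.
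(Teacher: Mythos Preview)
Your upper bound sketch is essentially the paper's: reduce by density and the relaxation formula \eqref{relaxedenergy} to polyhedral measures, take $\beta_\varepsilon=Q+\varepsilon\sqrt{|\log\varepsilon|}\,\beta+\varepsilon\hat\theta_\varepsilon^\nu$ with $\hat\theta_\varepsilon^\nu$ the mollified linear recovery field, and diagonalise. That part is fine.

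The lower bound, however, has a genuine gap in the localisation. You split $\Omega$ into $\rho$-tubes $N_\rho$ around the segments of the \emph{limit} measure $\mu$ and its complement, and claim that on $\Omega\setminus N_\rho$ the strain is ``asymptotically compatible''. This is false in general: weak-$*$ convergence $\mu_j\weakstar\mu$ gives no control on $\supp\mu_j$, so large portions of the dislocation lines of $\mu_j$ (hence of the singular set of $\beta_j$) can lie outside $N_\rho$. On $\Omega\setminus N_\rho$ the field $\beta_j$ is then \emph{not} curl-free and its energy does not linearise to the bulk term; conversely $N_\rho$ may miss most of the line-tension contribution of $\mu_j$. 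Moreover $\mu\in\mathcal{M}_\mathcal{B}(\Omega)$ is only $1$-rectifiable, so there are no ``limit segments'' to slice along, and a planar slicing in any case does not produce the anisotropic 3D density $\Psi_0(b,t)$.

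The paper instead localises around $\supp\mu_j$: with $\rho_j=(\alpha_{\varepsilon_j}h_{\varepsilon_j})^3\to0$ one sets $\Omega'_j=\{\dist(\cdot,\supp\mu_j)\ge2\rho_j\}$ and $\Omega''_j=\Omega\setminus\Omega'_j$. On $\Omega'_j$ the Taylor/lower-semicontinuity argument you describe works verbatim. On $\Omega''_j$ one covers with pairwise disjoint cylinders $U^i_j$ around the \emph{segments of} $\mu_j$ and uses the 3D auxiliary cell problem $\Psi_\lambda^{nl}$ directly (no slicing). The point you do not address is how the extra $\lambda|\beta-Q|^2$ penalty is smuggled in: the paper splits $F''_j=(1-\nu)F''_j+\nu F''_j$, bounds $\nu F''_j$ from below by a constant via \eqref{**}, and then uses the compactness bound \eqref{lowbound2} to dominate $\tilde\lambda\int|\beta_j-Q_j|^2$ by that constant; only then is each cylinder contribution bounded below by $\Psi_\lambda^{nl}$ and the uniform estimate of Lemma~\ref{lemma4} applied. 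Finally the relaxed density $\tilde\Psi_0$ appears not from the cell problem but from the lower semicontinuity of $\int_\gamma\tilde\Psi_0$ under $\mu_j\weakstar\mu$, after separating small and large Burgers vectors. Your plan conflates these three steps (rigidity on the right domain, the $\lambda$-trick, and relaxation at the level of measures) and, as written, would not close.
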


\section{Asymptotics for straight dislocations }\label{sec2}
The elastic energy $\mathcal{F}_\varepsilon$ of a given polyhedral measure $\mu\in{\mathcal M}_\mathcal{B}^{h,\alpha}(\bar\Om)$ is asymptotically equivalent to the sum of the energy contributions of each segment. The latter is obtained by studying a cell problem, which provides the energy per unit length of a straight infinite dislocation. In the following we analyse a three-dimensional cell problem in a nonlinear framework combining and developing techniques used by \cite{C.G.O.} for the linear case and by \cite{s.z.} for the two-dimensional nonlinear case.
\subsection{Linear Cell Problem}
In the three-dimensional linear framework, corresponding to a given elasticity tensor $\mathbb{C}\colon\R^{3\times3}\to\R^{3\times3}$ linear, symmetric and positive definite, the line tension density was characterised in \cite{C.G.O.}. For completeness we give here the main results.\\
For $t\in S^2$ we fix a matrix 
\begin{equation}
    Q_t\in SO(3)\quad\text{such that $Q_te_3=t$},
    \label{notation1}
\end{equation}
and let 
\begin{equation}
\label{notation2}
\Phi_t(r,\theta,z):=Q_t(r\cos\theta,r\sin\theta,z),
\end{equation}
be the change of variables to cylindrical coordinates with axis $t$. The local basis in cylindrical coordinates is
\begin{equation}
    e_r:=(\cos\theta,\sin\theta,0),\quad e_\theta:=(-\sin\theta,\cos\theta,0),\quad e_3:=(0,0,1).\label{notation3}
\end{equation}
We denote by $B'_R$ (respectively $B_R$) the ball of radius $R$ in $\mathbb{R}^2$ (respectively $\mathbb{R}^3$) centered in the origin.\\

For any $b\in\R^3$ and $t\in S^2$ we define $\mu_{b,t}:=b\otimes t\mathcal{H}^1\res\mathbb{R}t$ and denote by $\eta_{b,t}\in L^1(\mathbb{R}^3;\mathbb{R}^{3\times3})$ the distributional solution to 
\begin{equation}
    \begin{cases}
    \Div\mathbb{C}\xi=0&\text{in $\mathbb{R}^3$}\\
    \curl\xi=\mu_{b,t}&\text{in $\mathbb{R}^3$}.
    \end{cases}\label{eul1}
\end{equation}
The function $\eta_{b,t}$ is of the form
\begin{equation}
     \eta_{b,t}(\Phi_t(r,\theta,z))=\frac{1}{r}(f(\theta)\otimes Q_te_\theta+g\otimes Q_te_r),\label{eul2}
\end{equation}
where $(f,g)\in L^2((0,2\pi);\mathbb{R}^3)\times\mathbb{R}^3$, with $\int_0^{2\pi} f(s)ds=b$, are solutions to the following minimum problem
\begin{equation}\label{sself}
\Psi_0(b,t):=\min\left\{\int_0^{2\pi}\frac12\mathbb{C}G(\theta):G(\theta)\ d\theta\right\},
\end{equation}
the minimum being taken over  all functions $G\colon(0,2\pi)\to\R^{3\times3}$ of the form $G(\theta)=f(\theta)\otimes Q_te_3+g\otimes Q_te_r$ as in \eqref{eul2} 
(see \cite[Lemma 5.1]{C.G.O.} ).
In particular
\begin{equation}
    \label{eul3}
    |\eta_{b,t}|(x)\le c\frac{|b|}{\dist(x,\R t)},
\end{equation}
for a constant $c>0$ depending only on $\mathbb C$. The line-tension energy density associated to the measure $\mu_{b,t}$ given  in \eqref{sself} can be rewritten as
\begin{equation}
    \Psi_0(b,t):=\int_0^{2\pi}\frac{1}{2}\mathbb{C}\eta_{b,t}(\Phi_t(1,\theta,0)):\eta_{b,t}(\Phi_t(1,\theta,0))d\theta.
    \label{selfen1}
\end{equation}

\begin{oss}
The function $\Psi_0$ is continuous and satisfies
\begin{itemize}
    \item there exist $c_0,c_1>0$ such that
    \begin{equation}
    c_0|b|^2\le\Psi_0(b,t)\le c_1|b|^2;\label{selfen2}
\end{equation}
\item for any $t\in S^2$ the map $b\mapsto\Psi_0(b,t)$ is quadratic;
\item there exists $c>0$ such that for any $t,t'\in S^2$
\begin{equation}
    \Psi_0(b,t)\le(1+c|t-t'|)\Psi_0(b,t').\label{selfen3}
\end{equation}
\end{itemize}
\label{remark1}
\end{oss}

This line-tension energy density for straight infinite dislocations is the starting point in order to characterise the asymptotics in \cite{C.G.O.}. Indeed it can be shown that the line-tension energy 
\begin{equation}
\int_\gamma  \Psi_0(b,t)d\mathcal H^1,    
\end{equation}
may be not lower semicontinuous, therefore the limiting energy in the linear framework requires a relaxation procedure. 
More precisely in \cite{C.G.M.} the authors show that the relaxed energy is given by 
\begin{equation*}
    \int_\gamma\tilde{\Psi}_0(b,t)d\mathcal{H}^1,
\end{equation*}
where $\tilde{\Psi}_0$ is the $\mathcal H^1$-elliptic envelope of $\Psi_0$ as defined in \eqref{relaxedenergy}. We stress that $\Psi_0(b,t)\ge\tilde\Psi_0(b,t)$ and, in particular, it has linear growth with respect to $b$, namely, there exist $\tilde{c}_0,\tilde{c}_1>0$ such that
\begin{equation}
    \tilde{c}_0|b|\le\tilde\Psi_0(b,t)\le\tilde{c}_1|b|.\label{selfen4}
\end{equation}

Next we characterise the elastic energy induced by $\mu_{b,t}$ in a finite cylinder.
To this aim fix $b\in\mathbb{R}^3$, $t\in S^2$, $h,r,R\in(0,\infty)$ with $r<R\le h$, and denote $T_h^r:=Q_t((B'_R\setminus B'_r)\times(0,h))$ with $Q_t$ defined in (\ref{notation1}).
Then consider the three-dimensional linear cell problem 
\begin{align}
    \Psi(b,t,h,r,R)&:=\frac{1}{h\log\frac{R}{r}}\min\biggl\{\int_{T_h^r}\frac{1}{2}\mathbb{C}\eta:\eta\ dx,\,\eta\in L^1_{loc}(\mathbb{R}^3;\mathbb{R}^{3\times3}),\curl\eta=\mu_{b,t}\biggr\}\notag\\ 
   &=\frac{1}{h\log\frac{R}{r}}\min\biggl\{\int_{T_h^r}\frac{1}{2}\mathbb{C}\eta:\eta\ dx,\,\eta\in L^2(T_h^r;\mathbb{R}^{3\times3}),\,\curl\eta=0\,\text{in $T_h^r$},\notag \\
       &\;\;\;\;\int_0^{2\pi}\eta(\Phi_t(\rho,\theta,z))Q_te_\theta\rho\ d\theta=b\ \text{for }\,(\rho,z)\in(r,R)\times(0,h)\biggr\}
       \label{linearcell}.
    \end{align}
 
\begin{oss} The condition
\begin{equation*}
   \int_0^{2\pi}\eta(\Phi_t(\rho,\theta,z))Q_te_\theta\rho\ d\theta=b\quad\text{for }\,(\rho,z)\in(r,R)\times(0,h),
\end{equation*}
is intended in the  following integral sense 
\begin{equation}\label{add3}
    \int_0^h\int_r^R\varphi(\rho,z) \int_0^{2\pi}\eta(\Phi_t(\rho,\theta,z))Q_te_\theta\rho\ d\theta d\rho dz=\int_0^h\int_r^R\varphi(\rho,z)b\ d\rho dz,
\end{equation}
for all $\varphi\in L^2((r,R)\times(0,h))$.
\end{oss}
The following asymptotic analysis is proved in \cite[Lemma 5.6, 5.10 and 5.11]{C.G.O.}
\begin{lem} There is a constant $c>0$ such that for every $M\ge1$, there is a function $\omega_M\colon(0,\infty)\to(0,\infty)$ with 
\begin{equation*}
    \lim_{r\to0}\omega_M(r)=0,
\end{equation*}
and
\begin{equation*}
    \left(1-\frac{c}{M}-\omega_M\Big(\frac{r}{R}\Big)\right)\Psi_0(b,t)\le\Psi(b,t,h,r,R)\le\Psi_0(b,t),
\end{equation*}
for all $b\in\mathbb{R}^3$, $t\in S^2$, $r,R,h>0$ such that $MR\le h$. In particular 
\begin{equation} \label{cgolem1}
  \lim_{h\to\infty}\lim_{r\to0}\Psi(b,t,h,r,R)=\Psi_0(b,t).
\end{equation}
Furthermore, there exists $c_*>0$ such that, for all $b,t,h,r,R$ with $2r\le R\le h$,
\begin{equation}
    c_*\left|b\right|^2\le\Psi(b,t,h,r,R).
\end{equation}\label{cgolem}
\end{lem}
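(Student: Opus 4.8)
\noindent\emph{Strategy of proof.} The statement is exactly \cite[Lemma 5.6, 5.10 and 5.11]{C.G.O.}, and here I only sketch the argument one would follow. The \emph{upper bound} comes from using $\eta_{b,t}$ of \eqref{eul2} as a competitor in \eqref{linearcell}. The cylinder $T_h^r$ does not meet the axis $\R t$, so $\curl\eta_{b,t}=0$ there, and from \eqref{eul2} one computes $\eta_{b,t}(\Phi_t(\rho,\theta,z))Q_te_\theta=\tfrac1\rho f(\theta)$ (using $Q_te_\theta\cdot Q_te_\theta=1$ and $Q_te_r\cdot Q_te_\theta=0$), so the circulation condition \eqref{add3} holds with value $\int_0^{2\pi}f(\theta)\,d\theta=b$; hence $\eta_{b,t}$ is admissible. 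Since $\eta_{b,t}(\Phi_t(\rho,\theta,z))=\tfrac1\rho\,\eta_{b,t}(\Phi_t(1,\theta,0))$, passing to cylindrical coordinates and using \eqref{selfen1},
\[
\int_{T_h^r}\tfrac12\mathbb{C}\eta_{b,t}:\eta_{b,t}\,dx=\int_0^h\!\!\int_r^R\!\!\int_0^{2\pi}\frac{1}{\rho^{2}}\Big(\tfrac12\mathbb{C}\eta_{b,t}(\Phi_t(1,\theta,0)):\eta_{b,t}(\Phi_t(1,\theta,0))\Big)\rho\,d\theta\,d\rho\,dz=h\log\tfrac{R}{r}\,\Psi_0(b,t),
\]
whence $\Psi(b,t,h,r,R)\le\Psi_0(b,t)$.

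For the \emph{lower bound} I would take an almost optimal $\eta$ for \eqref{linearcell} and reduce to a two-dimensional estimate by slicing along the $t$-axis. Split $B'_R\setminus B'_r$ into $\sim\log_2(R/r)$ dyadic sub-annuli $A_k=B'_{2^{k+1}r}\setminus B'_{2^k r}$; on $Q_t(A_k\times(0,h))$ the target energy is $h\log 2\cdot\Psi_0(b,t)$ (the contribution of that scale to the profile's energy), and since $h\ge MR\ge M\cdot(\text{outer radius of }A_k)$ this region has aspect ratio $\ge M$, so a rescaled single-scale estimate yields energy at least $(1-c/M)\,h\log 2\cdot\Psi_0(b,t)$, the loss $c/M$ coming from the $O(1)$-long (in units of the cross-sectional scale) end regions near $\{z=0\}$ and $\{z=h\}$ where $\eta$ need not resemble the profile. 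Summing over the $\sim\log_2(R/r)$ scales and dividing by $h\log(R/r)$ produces $\Psi\ge(1-c/M-\omega_M(r/R))\Psi_0(b,t)$, with $\omega_M(r/R)\to0$ as $r/R\to0$ absorbing the rounding in $\log_2(R/r)$ and the truncation of the outermost annulus; \eqref{cgolem1} then follows by letting $r\to0$ and afterwards $h\to\infty$ (so that $M$ may be taken to infinity). The single-scale estimate is the crux: expanding in Fourier modes in $\theta$, the zeroth angular mode is forced by the circulation constraint to be close to $\tfrac1\rho\,\eta_{b,t}(\Phi_t(1,\cdot,0))$ up to a skew-symmetric part that carries no energy (here a Korn-type inequality for incompatible fields is needed), the higher modes and the $z$-dependence only add energy, and a cut-off in $z$ localizes the bound at the price of the $c/M$ end-loss.

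For the bound $c_*|b|^2\le\Psi$, where $M$ cannot be taken large, I would argue crudely: by the circulation constraint and Cauchy--Schwarz, for a.e.\ $(\rho,z)\in(r,R)\times(0,h)$ one has $\int_0^{2\pi}|\eta(\Phi_t(\rho,\theta,z))|^2\,d\theta\ge\tfrac{|b|^2}{2\pi\rho^{2}}$, hence $\int_{T_h^r}|\eta|^2\,dx\ge\tfrac{|b|^2}{2\pi}\,h\log\tfrac{R}{r}$; as $\mathbb{C}$ is coercive only on symmetric matrices, this must be upgraded via a Korn-type inequality for incompatible (curl-free) fields, as in \cite{GLP,C.G.O.}, to $\int_{T_h^r}\tfrac12\mathbb{C}\eta:\eta\,dx\ge c_*|b|^2 h\log\tfrac{R}{r}$, and dividing by $h\log(R/r)$ concludes.

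The hard part is the single-scale lower bound underpinning the first estimate: showing that a three-dimensional curl-free incompatible field on a short annular cylinder cannot beat the two-dimensional optimal profile beyond a controlled $O(1/M)$ end-loss. This rests on a rigidity/Korn inequality for incompatible fields (to prevent $\eta$ from being essentially skew, and hence energy-free) together with the angular Fourier analysis and the $z$ cut-off; the remainder is bookkeeping over the dyadic scales.
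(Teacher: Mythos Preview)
The paper does not prove this lemma at all; it simply cites \cite[Lemma 5.6, 5.10 and 5.11]{C.G.O.} as the source, and you correctly identify this at the outset. Your sketch is therefore not being compared against a proof in the paper but against the cited reference, and as a sketch it is sound: the upper bound via $\eta_{b,t}$ is exactly right (and is reused verbatim later in the paper, e.g.\ in the proof of Lemma~\ref{upperbound}), and your argument for the crude bound $c_*|b|^2\le\Psi$ via the circulation constraint, Jensen/Cauchy--Schwarz, and a Korn inequality to pass from $|\eta|^2$ to $\mathbb{C}\eta:\eta$ is precisely the mechanism used in \cite{C.G.O.} (and mirrored in this paper's nonlinear analogue, the proof of \eqref{unif2} in Lemma~\ref{lemma4}).

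One remark on the lower bound: your dyadic decomposition in the radial variable alone, keeping the full height $h$ on each annulus, is a valid route, but the argument in \cite{C.G.O.} (and the nonlinear adaptation in Lemma~\ref{lowerbound} here) is organized somewhat differently---it rescales to a fixed annular cylinder and uses a compactness/lower-semicontinuity argument rather than an explicit Fourier decomposition in $\theta$. Your Fourier/Korn description of the single-scale estimate is a reasonable heuristic, but the actual proof in \cite{C.G.O.} proceeds via the variational characterization of $\Psi_0$ (their Lemma~5.1) combined with a Korn inequality on hollow cylinders (their Lemma~5.9), not by mode-by-mode analysis. This is a difference of packaging rather than substance; both routes hinge on the same rigidity input, and you correctly flag the single-scale lower bound as the crux.
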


\begin{lem}\label{estimatefromabove}
Let $h,R>0$ with $R\le h$, $t\in S^2$, $b\in\R^3$, $T:=Q_t(B'_R\times(0,h))$. Let $\eta\in L^1(T;\R^{3\times3})$ be such that 
\begin{equation}\label{estimatefromabove1}
    |\eta|(x)\le\frac{c^*|b|}{\dist(x,\R t)}\quad\text{for all}\quad x\in T,
\end{equation}
for some constant $c^*>0$, and
\begin{equation}\label{estimatefromabove2}
    \curl\eta=b\otimes t\mathcal{H}^1\res\R t\quad\text{in $T$}.
\end{equation}
Then, there exists $\tilde\eta_r\in L^1(T;\R^{3\times3})$ such that $\curl\tilde\eta_r=\curl\eta$ in $T$, $\tilde\eta_r=\eta$ in a neighbourhood of $\partial T$ and, for all $r\in(0,R/3)$,
\begin{equation*}
    \int_{T_h^r}\frac12{\mathbb C}\tilde\eta_r:\tilde\eta_r dx\le \Psi_0(b,t)h\log\frac Rr+C_0(1+c^*)^2|b|^2\left(h\left(\log\frac Rr\right)^{1/2}+\frac{h^3}{R^2}\right),
\end{equation*}
where $T_h^r:=Q_t((B'_R\setminus B'_r)\times(0,h))$ and $C_0$ is an universal constant (thus independent of $c^*$). 
\end{lem}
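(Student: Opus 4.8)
The plan is to glue the explicit minimising radial profile $\eta_{b,t}$ from \eqref{eul2} --- which is $z$-independent and realises the self-energy exactly --- to the given field $\eta$ in a neighbourhood of $\partial T$, and then to estimate the gluing error. A direct computation in the cylindrical coordinates of \eqref{notation2}, using that $\eta_{b,t}(\Phi_t(\rho,\theta,z))$ is homogeneous of degree $-1$ in $\rho$ and recalling \eqref{selfen1}, gives the exact identity
\begin{equation*}
\int_{T_h^r}\tfrac12\,\mathbb{C}\eta_{b,t}:\eta_{b,t}\,dx=\Psi_0(b,t)\,h\log\tfrac{R}{r},
\end{equation*}
so $\eta_{b,t}$ alone produces the leading term. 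Since by \eqref{estimatefromabove2} and \eqref{eul1} both $\eta$ and $\eta_{b,t}$ have the same curl $b\otimes t\,\mathcal H^1\res(\R t\cap T)$ on the simply connected set $T$, the difference $w:=\eta-\eta_{b,t}\in L^1(T;\R^{3\times3})$ is a row-wise gradient, $w=\nabla u$ with $u\in W^{1,1}(T;\R^3)$, and by \eqref{estimatefromabove1} and \eqref{eul3} it satisfies the pointwise bound $|\nabla u|=|w|\le c\,(1+c^*)|b|/\dist(x,\R t)$ in $T$.

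Next I would fix a cut-off $\chi$ with $\chi\equiv1$ in a (relative) neighbourhood of $\partial T$ and $\chi\equiv0$ on a ``core'' $\{\dist(\cdot,\R t)<R/3\}\cap\{2\delta<z<h-2\delta\}$, the transitions taking place over the radial annulus $\{R/3<\dist(\cdot,\R t)<2R/3\}$ (so $|\nabla\chi|\lesssim1/R$ there) and over two horizontal layers of thickness $\sim\delta$ adjacent to the caps $\{z=0\}$ and $\{z=h\}$ (so $|\nabla\chi|\lesssim1/\delta$ there), where the thickness is calibrated as $\delta:=h/\bigl(4\sqrt{1+\log(R/r)}\bigr)$; and then set
\begin{equation*}
\tilde\eta_r:=\eta_{b,t}+\nabla\bigl(\chi\,(u-a)\bigr),
\end{equation*}
with $a\in\R^{3\times3}$ the mean of $u$ over the radial annulus. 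Then $\curl\tilde\eta_r=\curl\eta_{b,t}+\curl\nabla(\chi(u-a))=\curl\eta$ in $T$; where $\chi\equiv1$, hence near $\partial T$, one has $\tilde\eta_r=\eta_{b,t}+\nabla u=\eta$; and $\tilde\eta_r=\eta_{b,t}$ on the core.

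It then remains to bound $\int_{T_h^r}\tfrac12\mathbb{C}\tilde\eta_r:\tilde\eta_r\,dx-\Psi_0(b,t)h\log\tfrac{R}{r}$, i.e.\ (after expanding the quadratic form) the cross term $\int_{T_h^r}\mathbb{C}\eta_{b,t}:\nabla(\chi(u-a))$ and the quadratic term $\int_{T_h^r}\tfrac12\mathbb{C}\nabla(\chi(u-a)):\nabla(\chi(u-a))$, whose integrands vanish where $\chi=0$. On the horizontal layers $\chi\equiv1$, so the relevant field is essentially $w$, and since both $\|\eta_{b,t}\|_{L^2}^2$ and $\|w\|_{L^2}^2$ over a layer of thickness $\delta$ inside $T_h^r$ are $\lesssim(1+c^*)^2|b|^2\,\delta\log\tfrac{R}{r}$ (the radial integral of $\rho^{-2}\cdot\rho$ over $(r,R)$), Cauchy--Schwarz and the choice of $\delta$ turn this into a contribution $\lesssim(1+c^*)^2|b|^2\,h(\log(R/r))^{1/2}$. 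On the radial annulus I would use $\nabla(\chi(u-a))=\chi w+(u-a)\otimes\nabla\chi$: the term $\chi w$ is harmless because $\|w\|_{L^2(\{\dist(\cdot,\R t)>R/3\})}^2\lesssim(1+c^*)^2|b|^2h$, while for $(u-a)\otimes\nabla\chi$ I would invoke the Poincaré inequality on the annulus $\{R/3<\dist(\cdot,\R t)<2R/3,\ 0<z<h\}$, whose Poincaré constant is of order $h^2$ because $h\ge R$ makes the height the dominant dimension; together with $\|\nabla u\|_{L^2(\text{annulus})}^2\lesssim(1+c^*)^2|b|^2h$ and $|\nabla\chi|\lesssim1/R$ this gives $\|(u-a)\otimes\nabla\chi\|_{L^2}^2\lesssim(1+c^*)^2|b|^2\,h^3/R^2$, and the cross term of $\eta_{b,t}$ with this part is absorbed by Cauchy--Schwarz using $h/R\ge1$. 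Collecting the contributions yields $\int_{T_h^r}\tfrac12\mathbb{C}\tilde\eta_r:\tilde\eta_r\le\Psi_0(b,t)h\log\tfrac{R}{r}+C_0(1+c^*)^2|b|^2\bigl(h(\log(R/r))^{1/2}+h^3/R^2\bigr)$ with $C_0$ universal.

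The main obstacle is exactly this last step: the cut-off and the constant $a$ must be arranged so that the two error mechanisms produce precisely the stated powers. The horizontal layers must not be too thick, or they contribute $\delta\log\tfrac{R}{r}$ with $\delta$ of order $h$ instead of $h(\log(R/r))^{1/2}$ --- hence the calibration $\delta\sim h/\sqrt{\log(R/r)}$; on the other hand the radial transition cannot be thinned without $|\nabla\chi|\sim1/R$ blowing up against the (a priori unbounded) potential $u$, which is what forces the genuinely $r$-independent, aspect-ratio-dependent term $h^3/R^2$ through the tall-cylinder Poincaré constant $\sim h^2$. One must also handle that $u$ is only $W^{1,1}$ near $\R t$, where $|\nabla u|\sim1/\dist(\cdot,\R t)$: this is precisely why the estimate is confined to $T_h^r$ and why the factor $\log(R/r)$ cannot be removed. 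A clean way to prevent the lateral and cap matchings from interfering is to perform the gluing in two successive steps --- first a purely radial cut-off correcting near the lateral surface, then a cut-off in $z$ applied to the potential of the resulting difference, which is already localised away from the lateral surface --- with the estimates carried out as above.
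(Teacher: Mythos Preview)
The paper does not give its own proof of this lemma; it cites \cite[Lemma 5.10]{C.G.O.} and only remarks that the $c^*$-dependence can be read off by inspecting that proof. Your construction is the right strategy and is the one used there: write $w:=\eta-\eta_{b,t}=\nabla u$ on the simply connected cylinder $T$, set $\tilde\eta_r=\eta_{b,t}+\nabla(\chi(u-a))$ with a cut-off $\chi$ equal to $1$ near $\partial T$ and $0$ on an inner core, and estimate the cross and quadratic remainders. Your treatment of the \emph{radial} transition is correct: with $a$ the mean of $u$ over the tall annulus $\{R/3<\dist<2R/3\}\times(0,h)$, the Poincar\'e constant is of order $h^2$, and together with $|\nabla\chi_R|\lesssim 1/R$ and $\|w\|_{L^2(\text{annulus})}^2\lesssim(1+c^*)^2|b|^2h$ this yields the $h^3/R^2$ term exactly as you say.

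The gap is in the cap-layer analysis. You write ``on the horizontal layers $\chi\equiv 1$, so the relevant field is essentially $w$'', which correctly produces the contribution $\delta\log(R/r)$ that you then calibrate to $h(\log(R/r))^{1/2}$. But $\chi$ must also transition in $z$ from $1$ (near the caps) to $0$ (in the core), and with any product-type cut-off this $z$-transition slab sits inside $\{\dist<2R/3\}$ and generates the term $(u-a)\otimes\partial_z\chi$ with $|\partial_z\chi|\sim 1/\delta$. Here one needs $\|u-a\|_{L^2}$ on a thin slab reaching down to $\dist=r$, and with $a$ fixed as the mean over the \emph{full} tall annulus there is no control of $u-a$ on that particular slab. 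Your two-step fix does not close this: after the radial step the new potential is exactly $v=(1-\chi_R)(u-a)$, which equals $u-a$ on $\{\dist<R/3\}$, so the obstruction reappears. A radial Poincar\'e for $v$ on the slab (using $v=0$ on $\{\dist>2R/3\}$) gives
\[
\|v\,\nabla\chi_z\|_{L^2}^2\ \gtrsim\ (1+c^*)^2|b|^2\Big(\frac{R^2}{\delta}\log\tfrac{R}{r}+\frac{1}{\delta^2}\|u-a\|_{L^2(\text{slab}\cap\text{annulus})}^2\Big),
\]
and with your calibration $\delta=h/\sqrt{\log(R/r)}$ both summands can exceed the stated error (take $h\sim R$). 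What is missing --- and what \cite{C.G.O.} supplies --- is a slice-selection step: one chooses by averaging a good height $z^*$ where the two-dimensional trace of the potential on the annulus is controlled by $(1+c^*)^2|b|^2h^2$, and performs the vertical matching by interpolating to the field frozen at that slice rather than by cutting off the full three-dimensional potential. This localises the vertical correction to the annulus and removes the parasitic $\log(R/r)$ factor, yielding the $h(\log(R/r))^{1/2}$ term as stated.
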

\begin{lem}\label{estimatefromabovemoll} 
	Let $h,R,r>0$, with $3r\le R\le h$, $t\in S^2$, $b\in\R^3$, $T:=Q_t(B'_R\times(0,h))$. Let $\eta\in L^1(T;\R^{3\times3})$ be such that 
	\begin{equation}\label{estimatefromabove1moll}
	|\eta|(x)\le\frac{c^*|b|}{r+\dist(x,\R t)}\quad\text{for all}\quad x\in T,
	\end{equation}
	for some constant $c^*>0$, and
	\begin{equation}\label{estimatefromabove2moll}
	\curl\eta=(b\otimes t\mathcal{H}^1\res\R t)*\varphi_r\quad\text{in $T$},
	\end{equation}
	where $\varphi_r(x)=r^{-3}\varphi(x/r)$ is a mollifier.
	Then, there exists $\hat\eta_r\in L^1(T;\R^{3\times3})$ such that $\curl\hat\eta_r=\curl\eta$ in $T$, $\hat\eta_r=\eta$ in a neighbourhood of $\partial T$ and, for all $r\in(0,R/3)$,
	\begin{equation*}
	\int_{T}\frac12{\mathbb C}\hat\eta_r:\hat\eta_r dx\le \Psi_0(b,t)h\log\frac Rr+C_0(1+c^*)^2|b|^2\left(h\left(\log\frac Rr\right)^{1/2}+\frac{h^3}{R^2}\right),
	\end{equation*}
	with $C_0$ an universal constant.
\end{lem}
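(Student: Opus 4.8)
\emph{Strategy.} The plan is to reduce the statement to the already-established non-mollified Lemma~\ref{estimatefromabove}, using the explicit whole-space field $\eta_{b,t}$ of \eqref{eul1}--\eqref{eul2} as a bridge, and to treat the core $Q_t(B'_r\times(0,h))$ separately by means of the regularized field $\eta_{b,t}*\varphi_r$. Since convolution commutes with $\curl$, the field $\eta_{b,t}*\varphi_r$ satisfies $\curl(\eta_{b,t}*\varphi_r)=(b\otimes t\,\mathcal H^1\res\R t)*\varphi_r$ on all of $\R^3$, i.e.\ exactly the curl prescribed for $\eta$ in $T$ by \eqref{estimatefromabove2moll}. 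Hence
\[
\bar\eta:=\eta-\eta_{b,t}*\varphi_r+\eta_{b,t}
\]
satisfies $\curl\bar\eta=b\otimes t\,\mathcal H^1\res\R t$ in $T$, which is the un-mollified constraint of Lemma~\ref{estimatefromabove}; moreover $\bar\eta\in L^1(T;\R^{3\times3})$ and, by \eqref{estimatefromabove1moll}, \eqref{eul3} and the pointwise bound recorded in the next paragraph, $|\bar\eta|(x)\le C(1+c^*)|b|/\dist(x,\R t)$ on $T$. Applying Lemma~\ref{estimatefromabove} to $\bar\eta$ (with $C(1+c^*)$ in place of $c^*$) yields $\bar\eta_r\in L^1(T;\R^{3\times3})$ with $\curl\bar\eta_r=b\otimes t\,\mathcal H^1\res\R t$ in $T$, $\bar\eta_r=\bar\eta$ near $\partial T$, and $\int_{T_h^r}\tfrac12\mathbb C\bar\eta_r:\bar\eta_r\,dx\le\Psi_0(b,t)h\log\tfrac Rr+C(1+c^*)^2|b|^2(h(\log\tfrac Rr)^{1/2}+h^3/R^2)$. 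One then defines
\[
\hat\eta_r:=\bar\eta_r+\eta_{b,t}*\varphi_r-\eta_{b,t}\qquad\text{in }T,
\]
so that $\curl\hat\eta_r=\curl\bar\eta_r+(b\otimes t\,\mathcal H^1\res\R t)*\varphi_r-b\otimes t\,\mathcal H^1\res\R t=\curl\eta$ in $T$, and in a neighbourhood of $\partial T$ one has $\hat\eta_r=\bar\eta+\eta_{b,t}*\varphi_r-\eta_{b,t}=\eta$.

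\emph{Two mollification estimates} (with constants depending only on $\mathbb C$ and $\varphi$). First, splitting according to whether $\dist(x,\R t)\lessgtr 2r$ and using $\varphi_r\le c\,r^{-3}\chi_{B_r}$, the local integrability of $y\mapsto1/\dist(y,\R t)$ in $\R^3$ and the bound \eqref{eul3}, one gets
\[
|\eta_{b,t}*\varphi_r|(x)\le\frac{\tilde c\,|b|}{r+\dist(x,\R t)}\qquad\text{for all }x\in\R^3.
\]
Second,
\[
\int_{T_h^r}|\eta_{b,t}*\varphi_r-\eta_{b,t}|^2\,dx\le\tilde C\,|b|^2h\,;
\]
indeed on $\{r<\dist(x,\R t)\le 3r\}$ both fields are $O(|b|/r)$ and the volume is $O(r^2h)$, while on $\{3r<\dist(x,\R t)<R\}$ a first-order Taylor estimate together with the interior gradient bound $|\nabla\eta_{b,t}|(x)\le C|b|/\dist(x,\R t)^2$ (valid because, away from $\R t$, $\eta_{b,t}$ is locally a gradient solving the linear elasticity system, so interior elliptic regularity and scaling apply) gives $|\eta_{b,t}*\varphi_r-\eta_{b,t}|(x)\le C\,r|b|/\dist(x,\R t)^2$, which is square-integrable on $\{3r<\dist(x,\R t)<R\}$ with the stated bound.

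\emph{Energy estimate and conclusion.} On the hollow cylinder $T_h^r$ write $\hat\eta_r=\bar\eta_r+(\eta_{b,t}*\varphi_r-\eta_{b,t})$ and apply the Cauchy--Schwarz inequality for the positive semidefinite form $\mathbb C$, $\mathbb C(A+B):(A+B)\le(1+\eps)\,\mathbb C A:A+(1+\eps^{-1})\,\mathbb C B:B$, together with $\mathbb C B:B\le C|B|^2$, the bound on $\int_{T_h^r}\tfrac12\mathbb C\bar\eta_r:\bar\eta_r$ above, the second estimate just proved, and $\Psi_0(b,t)\le c_1|b|^2$ from \eqref{selfen2}; choosing $\eps:=(\log\tfrac Rr)^{-1/2}\le(\log 3)^{-1/2}$ (recall $3r\le R$) one obtains $\int_{T_h^r}\tfrac12\mathbb C\hat\eta_r:\hat\eta_r\le\Psi_0(b,t)h\log\tfrac Rr+C(1+c^*)^2|b|^2(h(\log\tfrac Rr)^{1/2}+h^3/R^2)$, where the terms $h^2/R$ arising from the mixed term are absorbed into $h^3/R^2$ using $R\le h$. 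On the core $Q_t(B'_r\times(0,h))$ one uses that the field $\bar\eta_r$ produced by the proof of Lemma~\ref{estimatefromabove} coincides with the explicit field $\eta_{b,t}$ away from a neighbourhood of $\partial T$, in particular near $\R t$; hence $\hat\eta_r=\eta_{b,t}*\varphi_r$ on the core, and the first estimate above gives $\int_{Q_t(B'_r\times(0,h))}\tfrac12\mathbb C\hat\eta_r:\hat\eta_r\le C|b|^2h$. Adding the two contributions and using $\log\tfrac Rr\ge\log 3>1$ to absorb the $O(|b|^2h)$ terms into $|b|^2h(\log\tfrac Rr)^{1/2}$ yields the assertion with a universal constant $C_0$.

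\emph{Main obstacle.} The genuinely new—and delicate—point is to guarantee that $\hat\eta_r$ is square-integrable right up to the axis $\R t$ \emph{without} spoiling the sharp leading constant $\Psi_0(b,t)h\log\tfrac Rr$; this is precisely why the correction is taken to be $\eta_{b,t}*\varphi_r-\eta_{b,t}$, which by the second mollification estimate is uniformly small in $L^2(T_h^r)$ while at the same time cancelling the non-integrable singularity of $\bar\eta_r$ on the core. A secondary technical difficulty is the clean derivation of the two mollification estimates up to $\R t$, where $1/\dist(\cdot,\R t)$ is only borderline integrable in $\R^3$.
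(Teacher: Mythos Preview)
Your argument is correct. The paper itself does not give a proof of this lemma: immediately after the statement it simply records that both Lemma~\ref{estimatefromabove} and Lemma~\ref{estimatefromabovemoll} ``can be easily obtained by inspecting the proof of \cite[Lemma 5.10 and 5.11]{C.G.O.}'', keeping track of the dependence on~$c^*$. So there is no detailed proof in the paper to compare against; your reduction to the un-mollified Lemma~\ref{estimatefromabove} via the explicit correction $\eta_{b,t}*\varphi_r-\eta_{b,t}$ is exactly the natural route and is in the same spirit as the cited source, where the mollified lemma is also deduced from the sharp construction for $\eta_{b,t}$.

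One remark on presentation. The step where you assert that the field $\bar\eta_r$ produced by Lemma~\ref{estimatefromabove} coincides with $\eta_{b,t}$ near the axis uses the \emph{construction} in \cite[Lemma~5.10]{C.G.O.}, not merely the black-box statement of Lemma~\ref{estimatefromabove} (which only controls the energy on the hollow cylinder $T_h^r$ and says nothing about the core). That is perfectly consistent with how the paper itself proceeds---it, too, refers the reader to the proof in \cite{C.G.O.}---and the claim is indeed true for that construction; but it would be worth flagging explicitly, since without it $\hat\eta_r=\bar\eta_r+\eta_{b,t}*\varphi_r-\eta_{b,t}$ need not even lie in $L^2$ on $Q_t(B'_r\times(0,h))$. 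Your two mollification estimates and the $\eps=(\log\tfrac Rr)^{-1/2}$ interpolation are clean; the stray mention of ``terms $h^2/R$'' does not actually appear in your computation and can be dropped.
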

We have stated Lemma \ref{estimatefromabove} and Lemma \ref{estimatefromabovemoll} by explicitly keeping track of the dependence on the constant $c^*$ in the energy estimates, which can be easily obtained by inspecting the proof of \cite[Lemma 5.10 and 5.11]{C.G.O.}.
\subsection{Rigidity} In this section we state and prove a rigidity estimate (see \cite{F.J.M.02}) for a hollow cylinder with a constant which does not depend on the radius of the central hole. This result combines the two-dimensional nonlinear version proved in \cite{s.z.} with the three-dimensional Korn's inequality in \cite[Lemma 5.9]{C.G.O.}.
\begin{lem}[Rigidity with a hole] For every $h>0$ there exists $c=c(h)>0$ with the following property: let $R>0$, $\varepsilon\in(0,\frac{\min\{h,1\}}{2}R]$, $T_\varepsilon:=(B'_R\setminus B'_\varepsilon)\times(0,hR)$ and $u\in W^{1,2}(T_\varepsilon; \mathbb{R}^3)$, then there exists a rotation $Q\in SO(3)$  such that
\begin{equation*}
    \left\|\nabla u-Q\right\|_{L^2(T_\varepsilon)}\le c\left\|\dist(\nabla u,SO(3))\right\|_{L^2(T_\varepsilon)}. 
    \end{equation*}\label{rigidity}
\end{lem}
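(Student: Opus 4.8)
The plan is to reduce everything to the unit scale and then to a covering/patching argument. First I would rescale: setting $v(y):=\frac1R u(Ry)$ for $y$ in the normalized hollow cylinder $\hat T_\varepsilon:=(B'_1\setminus B'_{\varepsilon/R})\times(0,h)$, the distance $\dist(\nabla v,SO(3))$ and the deviation $\nabla v-Q$ scale the same way, so it suffices to prove the estimate with $R=1$, i.e.\ for $\rho:=\varepsilon/R\in(0,\min\{h,1\}/2]$ and the domain $T_\rho:=(B'_1\setminus B'_\rho)\times(0,h)$. The point of the lemma is that the constant $c$ may be taken independent of $\rho$; the dependence on $h$ is allowed.

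Next I would cover the punctured cross-section by finitely many overlapping ``nice'' subdomains on which the classical geometric rigidity estimate of \cite{F.J.M.02} applies with a fixed constant. Concretely: for dyadic annuli $A_k:=(B'_{2^{-k}}\setminus B'_{2^{-k-1}})$ the rescaled pieces $A_k\times(0,h)$ are all bi-Lipschitz equivalent to the fixed domain $A_0\times(0,h)$ with uniform bi-Lipschitz constants, so on each cylindrical shell $S_k:=A_k\times(0,h)$ geometric rigidity gives a rotation $R_k$ with
\begin{equation*}
\|\nabla u-R_k\|_{L^2(S_k)}\le c\,\|\dist(\nabla u,SO(3))\|_{L^2(S_k)},
\end{equation*}
with $c$ independent of $k$. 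The innermost shell containing $\partial B'_\rho$ is handled the same way (again a fixed shape up to uniform bi-Lipschitz change of variables). I would then control the mismatch between rotations on consecutive overlapping shells: on the overlap $S_k\cap S_{k+1}$ (which contains a ball of radius comparable to $2^{-k}$) one has
\begin{equation*}
|R_k-R_{k+1}|\,|S_k\cap S_{k+1}|^{1/2}\le \|\nabla u-R_k\|_{L^2(S_k\cap S_{k+1})}+\|\nabla u-R_{k+1}\|_{L^2(S_k\cap S_{k+1})},
\end{equation*}
so $|R_k-R_{k+1}|^2 2^{-2k}\lesssim \|\dist(\nabla u,SO(3))\|^2_{L^2(S_k\cup S_{k+1})}$. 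This is exactly the point where the argument of \cite{s.z.} in 2d and the weighted Korn-type estimate \cite[Lemma 5.9]{C.G.O.} in 3d enter: the gain of the factor $2^{-2k}$ (equivalently, measuring the rotation mismatch against the size of the annulus rather than its area alone) is what makes the telescoping sum $\sum_k |R_k-R_{k+1}|$ summable with a constant that does \emph{not} blow up as $\rho\to0$, because $\sum_k 2^{-k}$ converges while the energies $\|\dist(\nabla u,SO(3))\|_{L^2(S_k)}$ have finite-overlap and sum to at most $C\|\dist(\nabla u,SO(3))\|_{L^2(T_\rho)}$.

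With the mismatch estimate in hand I would fix $Q:=R_0$ (the rotation on the outermost shell), bound $|R_k-Q|\le\sum_{j<k}|R_{j+1}-R_j|$, and assemble
\begin{align*}
\|\nabla u-Q\|_{L^2(T_\rho)}^2
&\le 2\sum_k\|\nabla u-R_k\|_{L^2(S_k)}^2+2\sum_k|R_k-Q|^2|S_k|\\
&\le C\,\|\dist(\nabla u,SO(3))\|_{L^2(T_\rho)}^2+C\sum_k\Big(\sum_{j<k}|R_{j+1}-R_j|\Big)^2 2^{-2k},
\end{align*}
and the last double sum is controlled, via Cauchy--Schwarz against the convergent geometric series $\sum 2^{-k}$ and the finite-overlap bound on the shell energies, by $C\|\dist(\nabla u,SO(3))\|_{L^2(T_\rho)}^2$, with $C=C(h)$ independent of $\rho$. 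Undoing the rescaling gives the claim for general $R$. The main obstacle is the summability of the rotation defects near the hole: a naive application of rigidity on the full punctured cylinder would produce a constant degenerating logarithmically (or worse) as $\rho\to0$, and avoiding this requires precisely the annular decomposition together with the scale-sensitive control $|R_k-R_{k+1}|\lesssim 2^{k}\|\dist(\nabla u,SO(3))\|_{L^2(S_k\cup S_{k+1})}$ coming from the incompatible/weighted Korn inequality of \cite[Lemma 5.9]{C.G.O.} and its nonlinear counterpart in \cite{s.z.}; the rest is bookkeeping with the translation-invariance in the $z$-variable making the overlaps genuinely of the right size.
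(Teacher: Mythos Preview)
Your dyadic-shell decomposition has a genuine gap: the claim that the shells $S_k=A_k\times(0,h)$ are uniformly bi-Lipschitz equivalent to $A_0\times(0,h)$ is false in three dimensions. Mapping $A_k$ onto $A_0$ requires a dilation by $2^{k}$ in the cross-section only, while the height stays equal to $h$; equivalently, under the isotropic rescaling that sends $A_k$ to $A_0$ the height becomes $2^{k}h$. Thus $S_k$ is a thin cylindrical shell whose aspect ratio (height over radial thickness) is $\sim h\,2^{k}$, and the Friesecke--James--M\"uller rigidity constant on such a domain behaves like $h\,2^{k}$ (think of a slightly bent rod of length $2^{k}h$ and unit thickness: the distance to $SO(3)$ is uniformly small while the end rotations differ by an amount proportional to the length). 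With the correct constant your first assembly term is $\sum_k (c\,2^{k})^2\|\dist(\nabla u,SO(3))\|_{L^2(S_k)}^2$, which is not controlled by the total energy, and the telescoping that follows inherits the same blow-up. The references you invoke do not close the gap: \cite[Lemma~5.9]{C.G.O.} is the \emph{linear} Korn inequality on the hollow cylinder (i.e.\ the very statement you are proving, in its linear incarnation), and the argument of \cite{s.z.} is genuinely two-dimensional, where $A_k$ is similar to $A_0$ and no aspect-ratio degeneration occurs.

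The paper proceeds by a different idea altogether: instead of patching concentric shells, it \emph{fills the hole} and then applies classical rigidity once on the fixed full cylinder $B'_1\times(0,h)$. To construct the extension it works at the single scale $\varepsilon$: the inner annulus $(B'_{2\varepsilon}\setminus B'_\varepsilon)\times(0,h)$ is sliced \emph{vertically} into $\sim h/\varepsilon$ pieces $F_j=(B'_{2\varepsilon}\setminus B'_\varepsilon)\times(z_j,z_j+2\varepsilon)$, each of which rescales to a fixed domain, so rigidity and Poincar\'e on $F_j$ give affine approximations $Q_jx+q_j$ with uniform constants. An extension lemma produces $u_j$ on the full disc $\hat F_j=B'_{2\varepsilon}\times(z_j,z_j+2\varepsilon)$, and a partition of unity in the $z$-variable glues these into $\tilde u$ on $B'_\varepsilon\times(0,h)$ with $\|\dist(\nabla\tilde u,SO(3))\|_{L^2}$ controlled by the original distance. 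The vertical patching involves $O(h/\varepsilon)$ pieces, but the local errors combine with bounded overlap and no geometric weight, so the final constant depends only on $h$.
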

\begin{proof} In order to obtain the estimate we need to extend the function $u$ in the inner cylinder. This should be done in two steps.
	By scaling we can take $R=1$. We set $N:=\lfloor h/\varepsilon\rfloor-1$ and $z_j:=j\varepsilon$ for $j=0,...,N-1$, $z_N:=h-2\varepsilon$, so that $z_N-z_{N-1}\in[0,\varepsilon)$. For every $j$, we now apply the rigidity estimate \cite[Theorem 3.1]{F.J.M.02} on the domain $F_j:=(B'_{2\varepsilon}\setminus B'_\varepsilon)\times(z_j,z_j+2\varepsilon)$ we get
	\begin{equation*}
	\left\|\nabla u-Q_j\right\|_{L^2(F_j)}\le c\left\|\dist(\nabla u,SO(3))\right\|_{L^2(F_j)},
	\end{equation*}
	for a rotation $ Q_j\in SO(3)$, with a constant that, by scaling, does not depend on $\varepsilon$. 
	Moreover applying Poincare's inequality we find $q_j\in\R^3$ such that 
	\begin{equation*}
	\frac{1}{\varepsilon}\|u-Q_jx-q_j\|_{L^2( F_j)}\le c\|\dist(\nabla u,SO(3))\|_{L^2(F_j)}.
	\end{equation*}
	Therefore by applying the triangular inequality we derive for $k=j-1$ and $k=j+1$ 
	\begin{equation}\label{Roby}
	\frac{1}{\varepsilon}\left\|Q_kx+q_k-Q_jx-q_j\right\|_{L^2(F_j\cap F_k)}\le c\left\|\dist(\nabla u,SO(3))\right\|_{L^2(F_j\cup F_k)}.
	\end{equation}
	Now, using Lemma 2.6 in \cite{extension}, for any $j$ we can extend $u$ to a function $u_j$ on $\hat F_j:=B'_{2\varepsilon}\times (z_j,z_j+2\varepsilon)$ satisfying
	\begin{equation}
	\frac{1}{\varepsilon}\|u_j-Q_jx-q_j\|_{L^2(\hat F_j)}+  \left\|\nabla u_j-Q_j\right\|_{L^2(\hat{F}_j)}\le c\left\|\dist(\nabla u,SO(3))\right\|_{L^2(F_j)}.\label{extension*}
	\end{equation} 
	Again, by scaling, the constant does not depend on $\varepsilon$. 
	 Finally we interpolate the different extensions by choosing $\varphi_j\in C_c^\infty(\R)$ such that
	$$\sum_j\varphi_j=1\quad\text{on $(0,h)$},\quad\varphi_j=0\quad\text{on $(0,h)\setminus(z_j,z_j+2\varepsilon)$},\quad |\nabla\varphi_j|\le\frac{c}{\varepsilon}.$$
	In particular $\varphi_0(0)=\varphi_N(h)=1$.
	We define $\tilde u:=\sum_j\varphi_ju_j$ in $B'_\varepsilon\times(0,h)$ and write 
	\begin{equation}\label{extension***}
	\nabla\tilde u=\sum_j\varphi_j\nabla u_j+ \sum_j(u_j-Q_jx-q_j)\otimes\nabla\varphi_j+\sum_j(Q_jx+q_j)\otimes\nabla\varphi_j.
	\end{equation}
	Observe that, since $\sum_j\nabla\varphi_j=0$, for every $k$ we have
	\begin{equation}\label{pieceofext}
	\sum_j(Q_jx+q_j)\otimes\nabla\varphi_j=\sum_j(Q_jx+q_j-Q_kx-q_k)\otimes\nabla\varphi_j.
	\end{equation}
We can now write, by triangular inequality
	\begin{align}
	\left\|\dist(\nabla \tilde u,SO(3))\right\|_{L^2(B'_\varepsilon\times(0,h))}&\le\sum_j\|\dist(\nabla \tilde u,SO(3))\|_{L^2(\hat F_j)}\notag\\
	&\le \sum_j\|\nabla \tilde u-Q_j\|_{L^2(\hat F_j)}.\label{triangineq}
	\end{align}
	In turn, by \eqref{extension***}, \eqref{pieceofext} and recalling that $\varphi_k=0$ on $(z_j,z_j+2\eps)$ for $k\notin\{j-1,j,j+1\}$ we have
	\begin{align}
	&	\|\nabla\tilde u-Q_j\|_{L^2(\hat F_j)}\le \|\sum_{k=j-1}^{j+1}\varphi_k(\nabla u_k-Q_j)\|_{L^2(\hat F_j)}\notag
	\\&	\,+ \|\sum_{k=j-1}^{j+1}(u_k-Q_kx-q_k)\otimes\nabla\varphi_k\|_{L^2(\hat F_j)}	+\|\sum_{k=j-1}^{j+1}(Q_jx+q_j-Q_kx+q_k)\otimes\nabla\varphi_k\|_{L^2(\hat F_j)} \notag\\
	&	\le \sum_{k=j-1}^{j+1}\|\varphi_k(\nabla u_k-Q_k)\|_{L^2(\hat F_j)}+ \sum_{k=j-1}^{j+1}\|\varphi_k(Q_k-Q_j)\|_{L^2(\hat F_j)}\notag\\
		&\,+\sum_{k=j-1}^{j+1}\|(u_k-Q_kx-q_k)\otimes\nabla\varphi_k\|_{L^2(\hat F_j)}+ \sum_{k=j-1}^{j+1}\|(Q_jx+q_j-Q_kx+q_k)\otimes\nabla\varphi_k\|_{L^2(\hat F_j)}.\label{triangg}
	\end{align}
Using \eqref{extension*} in the first term on the right hand side of \eqref{triangg} we get
	\begin{align*}
		 \sum_{k=j-1}^{j+1}\|\varphi_k(\nabla u_k-Q_k)\|_{L^2(\hat F_j)}& \le \sum_{k=j-1}^{j+1}\|(\nabla u_k-Q_k)\|_{L^2(\hat F_j\cap\hat F_k)}\\
		 &\le c\sum_{k=j-1}^{j+1} \|\dist(\nabla u, SO(3))\|_{L^2(F_k)}.
	\end{align*}
	Similarly we estimate the third term 
		\begin{align*}
\sum_{k=j-1}^{j+1}\|(u_k-Q_kx-q_k)\otimes\nabla\varphi_k\|_{L^2(\hat F_j)}&\le \frac{c}{\eps}\sum_{k=j-1}^{j+1}\|(u_k-Q_kx-q_k)\|_{L^2(\hat F_j\cap\hat F_k)}
	\\
	&\le c\sum_{k=j-1}^{j+1} \|\dist(\nabla u, SO(3))\|_{L^2(F_k)}.
	\end{align*}
	The fourth term is instead bounded by
	\begin{align*}
	&	\sum_{k=j-1}^{j+1}\|(Q_jx+q_j-Q_kx+q_k)\otimes\nabla\varphi_k\|_{L^2(\hat F_j)}\\&\,\,\le \frac c\eps \|Q_jx+q_j-Q_{j+1}x+q_{j+1}\|_{L^2(\hat F_j\cap\hat F_{j+1})}	+ \frac c\eps \|Q_jx+q_j-Q_{j-1}x+q_{j-1}\|_{L^2(\hat F_j\cap\hat F_{j-1})}\\
		&\,\,\le c\|\dist(\nabla u,SO(3))\|_{L^2(F_j\cup F_{j+1})}+ c\|\dist(\nabla u,SO(3))\|_{L^2(F_j\cup F_{j-1})},
	\end{align*}
	where the second inequality follows by \eqref{Roby} and by the fact that
		\begin{equation}\label{extension**}
		\frac{1}{\varepsilon}\|Q_jx+q_j-Q_kx-q_k\|_{L^2(\hat F_j\cap \hat F_k)}\le 	\frac{c}{\varepsilon}\|Q_jx+q_j-Q_kx-q_k\|_{L^2(F_j\cap F_k)}
		\end{equation}
		for $k=j-1,j+1$. It remains to estimate the second term on the right hand side of \eqref{triangg}, to this end we observe that 
	\begin{align*}
		\sum_{k=j-1}^{j+1}\|\varphi_k(Q_k-Q_j)\|_{L^2(\hat F_j)}&\le 
		|Q_{j+1}-Q_j||\hat F_j|^{1/2}+	|Q_{j-1}-Q_j||\hat F_j|^{1/2}\\
		&\le c\eps^{3/2}|Q_{j+1}-Q_j|+c\eps^{3/2}|Q_{j-1}-Q_j|\\
		&\le \frac c\eps \|Q_jx+q_j-Q_{j+1}x-q_{j+1}\|_{L^2(\hat F_j\cap \hat F_{j+1})}\\
		&\,\,\,\,\,+\frac c\eps \|Q_jx+q_j-Q_{j-1}x-q_{j-1}\|_{L^2(\hat F_j\cap \hat F_{j+1})}\\
		&\le c\|\dist(\nabla u,SO(3))\|_{L^2(F_j\cup F_{j+1})}\\&\,\,\,\,\,+ c\|\dist(\nabla u,SO(3))\|_{L^2(F_j\cup F_{j-1})}.
	\end{align*}
	Going back to \eqref{triangineq} we have that 
	\begin{align*}
			\left\|\dist(\nabla \tilde u,SO(3))\right\|_{L^2(B'_\varepsilon\times(0,h))} &\le c\sum_j\|\dist(\nabla u, SO(3))\|_{L^2(F_j)}\\
			&\le  c\|\dist(\nabla u,SO(3))\|_{L^2((B'_1\setminus B'_\varepsilon)\times(0,h))}.
	\end{align*}
	Therefore we infer
	\begin{align*}
&	\|\dist(\nabla\tilde u,SO(3))\|_{L^2(B'_1\times(0,h))}\le \|\dist(\nabla u,SO(3))\|_{L^2((B'_1\setminus B'_\varepsilon)\times(0,h))}\\
&\,\,\,	+ \|\dist(\nabla \tilde u,SO(3))\|_{L^2(B'_\varepsilon\times(0,h))}\le c\|\dist(\nabla u,SO(3))\|_{L^2(B'_1\setminus B'_\varepsilon\times(0,h))},\label{rigidity2*}
	\end{align*}
	for a constant $c$ independent of $\varepsilon$. We can conclude the proof by applying the rigidity estimate on the fixed domain $B'_1\times(0,h)$ (see \cite[Theorem 6]{F.J.M.}  and \cite{A.B.P.}).
\end{proof}
\begin{oss}\label{circularsector}
	The estimate in Lemma \ref{rigidity} still holds if we replace $T_\varepsilon$ with a set $S_\varepsilon(\vartheta)\times(0,hR)$ where $S_\varepsilon(\vartheta):=\{(r,\theta):\,\varepsilon<r<R,\,0<\theta<\vartheta\}$ for $0<\vartheta<2\pi$ with a constant depending on $\vartheta$.
\end{oss}
Lemma \ref{rigidity} cannot be directly applied to strains $\beta$ whose curl is concentrated on a line, as they are not globally gradients in the hollow cylinder. They become gradients if we cut the domain to let it be simply connected. Therefore as in \cite{s.z.} we need a variant of Lemma \ref{rigidity} for a domain with a hole and a cut.\\
We fix the segment $L_\varepsilon:=\{(x,0):\,\varepsilon<x<R\}\subset\mathbb{R}^2$ and denote
\begin{equation}
    \tilde{T}_\varepsilon:=[(B'_R\setminus B'_\varepsilon)\setminus L_\varepsilon]\times(0,hR).\label{cutcylinder}
\end{equation}
\begin{cor}[Rigidity with a "hole" and a "cut"]
For every $h,R>0$, $\varepsilon\in(0,\frac{\min\{h,1\}}{2}R]$ let $\tilde{T}_\varepsilon$ be as in (\ref{cutcylinder}). Then there exists a constant $c=c(h)>0$ such that for every $u\in W^{1,2}(\tilde{T}_\varepsilon;\R^3)$ there exists a rotation $Q\in SO(3)$ that satisfies
\begin{equation}
    \left\|\nabla u-Q\right\|_{L^2(\tilde{T}_\varepsilon)}\le c\left\|\dist(\nabla u,SO(3))\right\|_{L^2(\tilde{T}_\varepsilon)}. \label{cut_rigidity}
    \end{equation}\label{rigiditybis}
\end{cor}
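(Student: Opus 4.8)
The plan is to cover $\tilde{T}_\varepsilon$ by two overlapping sector–shaped cylinders, apply the rigidity estimate of Remark~\ref{circularsector} to each of them, and then glue the two resulting rotations using the fact that the two sectors overlap on a set whose Lebesgue measure stays bounded away from zero as $\varepsilon\to0$. A single application of Lemma~\ref{rigidity} or of Remark~\ref{circularsector} does not suffice because $\tilde{T}_\varepsilon$, written in cylindrical coordinates, is the sector of full opening angle $2\pi$, while Remark~\ref{circularsector} requires a strictly smaller angle.

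By scaling I may assume $R=1$, so that $\varepsilon\in(0,\tfrac{\min\{h,1\}}{2}]$, in particular $\varepsilon\le\tfrac12$, and $\tilde{T}_\varepsilon=[(B'_1\setminus B'_\varepsilon)\setminus L_\varepsilon]\times(0,h)$ with $L_\varepsilon$ the slit along the positive $x_1$–axis. Writing points of $B'_1\setminus B'_\varepsilon$ in polar coordinates $(r,\theta)$, $\varepsilon<r<1$, $\theta\in(0,2\pi)$, I set
$$S^1_\varepsilon:=\Bigl\{\varepsilon<r<1,\ 0<\theta<\tfrac{4\pi}{3}\Bigr\},\qquad S^2_\varepsilon:=\Bigl\{\varepsilon<r<1,\ \tfrac{2\pi}{3}<\theta<2\pi\Bigr\},$$
so that $(S^1_\varepsilon\cup S^2_\varepsilon)\times(0,h)=\tilde{T}_\varepsilon$ and the overlap $\mathcal{O}_\varepsilon:=S^1_\varepsilon\cap S^2_\varepsilon=\{\varepsilon<r<1,\ \tfrac{2\pi}{3}<\theta<\tfrac{4\pi}{3}\}$ is again a sector with a hole, with $|\mathcal{O}_\varepsilon\times(0,h)|=\tfrac{\pi}{3}(1-\varepsilon^2)h\ge c_0(h)>0$ uniformly in $\varepsilon$. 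Each of $S^1_\varepsilon\times(0,h)$ and $S^2_\varepsilon\times(0,h)$ is, after a rotation of the $(x_1,x_2)$–plane (hence a rotation in $SO(3)$, which leaves all the relevant $L^2$–quantities invariant), of the form $S_\varepsilon(\tfrac{4\pi}{3})\times(0,h)$ treated in Remark~\ref{circularsector}; applying that remark I obtain rotations $Q_1,Q_2\in SO(3)$ with
$$\left\|\nabla u-Q_i\right\|_{L^2(S^i_\varepsilon\times(0,h))}\le c(h)\,\left\|\dist(\nabla u,SO(3))\right\|_{L^2(S^i_\varepsilon\times(0,h))},\qquad i=1,2,$$
the constant depending only on $h$, the fixed opening angle $\tfrac{4\pi}{3}$ being absorbed into $c(h)$.

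It remains to glue. On $\mathcal{O}_\varepsilon\times(0,h)$ both estimates hold, so by the triangle inequality
\begin{align*}
c_0(h)^{1/2}\,|Q_1-Q_2| &\le \left\|Q_1-Q_2\right\|_{L^2(\mathcal{O}_\varepsilon\times(0,h))}\le \sum_{i=1}^{2}\left\|\nabla u-Q_i\right\|_{L^2(\mathcal{O}_\varepsilon\times(0,h))}\\
&\le c(h)\,\left\|\dist(\nabla u,SO(3))\right\|_{L^2(\tilde{T}_\varepsilon)}.
\end{align*}
Then I set $Q:=Q_1$ and split $\tilde{T}_\varepsilon=(S^1_\varepsilon\times(0,h))\cup(S^2_\varepsilon\times(0,h))$: on the first piece $\|\nabla u-Q\|_{L^2}$ is already controlled, while on the second I write $\nabla u-Q_1=(\nabla u-Q_2)+(Q_2-Q_1)$ and bound the last summand by $|Q_1-Q_2|\,|S^2_\varepsilon\times(0,h)|^{1/2}\le (\pi h)^{1/2}|Q_1-Q_2|$. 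Summing the two contributions yields \eqref{cut_rigidity} with a constant depending only on $h$. The only point that needs care — and the main obstacle — is precisely this uniformity in $\varepsilon$: it is the non-degeneracy of $|\mathcal{O}_\varepsilon\times(0,h)|$ as $\varepsilon\to0$ that keeps the gluing constant bounded. Note also that no Poincaré inequality, and hence no $\varepsilon$–dependent translation vector, is needed here, since only the rotations (not the full affine maps) must be matched across the overlap.
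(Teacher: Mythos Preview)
Your argument is correct and follows the same strategy as the paper: cover the cut annular cylinder by overlapping sector-cylinders, apply the sector rigidity of Remark~\ref{circularsector} on each piece, and glue the resulting rotations using that the overlap has measure bounded below uniformly in~$\varepsilon$. The only difference is cosmetic: the paper partitions into four sector-cylinders of opening $\pi/2$ (as in \cite[Proposition~3.3]{s.z.}), whereas you use two sectors of opening $4\pi/3$ with a single overlap, which makes the gluing step slightly shorter.
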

\begin{proof}
The proof is analogous to the two dimensional case (see \cite[Proposition 3.3]{s.z.}), and consists in partitioning the domain with four cylindrical subdomains of the form $S_\varepsilon(\pi/2)\times(0,hR)$ (as in Remark \ref{circularsector}) and consequently applying the rigidity estimate to each of them and find four constant rotations. The thesis follows by showing that for these rotations the estimate also holds in the whole domain.   
\end{proof}
\begin{cor}
\label{rigiditytris}
For every $h>0$ there exists $c=c(h)>0$ with the following property: let $R>0$, $\varepsilon\in(0,\frac{\min\{h,1\}}{2}R]$ and $T_\varepsilon$ as in Lemma \ref{rigidity} and let $b\in\R^3$ be fixed; then for every $\beta\in L^2(T_\varepsilon;\mathbb{R}^{3\times3})$ with $\curl\beta=0$ in $T_\varepsilon$ and $\int_0^{2\pi}\beta(\rho,\theta,z)e_\theta \rho \ d\theta=\varepsilon b$ for every $\rho\in(\varepsilon,R)$, $z\in(0,h)$, there exists a rotation $Q\in SO(3)$ such that
\begin{equation*}
    \|\beta-Q\|_{L^2({T}_\varepsilon)}\le c\| \dist(\beta,SO(3))\|_{L^2({T}_\varepsilon)}.
\end{equation*}
\end{cor}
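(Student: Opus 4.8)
The plan is to reduce Corollary~\ref{rigiditytris} to the rigidity estimate with a hole and a cut, namely Corollary~\ref{rigiditybis}. The key observation is that a field $\beta\in L^2(T_\varepsilon;\mathbb{R}^{3\times 3})$ with $\curl\beta=0$ in the hollow cylinder $T_\varepsilon$ is not necessarily a gradient there, because $T_\varepsilon$ is not simply connected; but after removing the cut $L_\varepsilon$ (the radial slit) the domain $\tilde T_\varepsilon$ of \eqref{cutcylinder} is simply connected, so there $\beta=\nabla u$ for some $u\in W^{1,2}(\tilde T_\varepsilon;\mathbb{R}^3)$. Applying Corollary~\ref{rigiditybis} to this $u$ gives a rotation $Q\in SO(3)$ with
\begin{equation*}
\|\beta-Q\|_{L^2(\tilde T_\varepsilon)}\le c\,\|\dist(\beta,SO(3))\|_{L^2(\tilde T_\varepsilon)}.
\end{equation*}
Since $L_\varepsilon$ is $\mathcal{H}^3$-negligible, the $L^2$ norms over $\tilde T_\varepsilon$ and over $T_\varepsilon$ coincide, which yields exactly the claimed estimate. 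In fact the circulation condition $\int_0^{2\pi}\beta(\rho,\theta,z)e_\theta\,\rho\,d\theta=\varepsilon b$ is not even needed for this bound: it is recorded in the statement only because this corollary is the form in which the estimate gets used later (it pins down which curl-free fields arise from strains associated with a straight dislocation of Burgers vector $\varepsilon b$), but the rigidity inequality itself follows purely from $\curl\beta=0$.

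Thus the proof has essentially three short steps. First, invoke the fact that a curl-free $L^2$ field on the simply connected open set $\tilde T_\varepsilon$ is a distributional gradient $\nabla u$ with $u\in W^{1,2}(\tilde T_\varepsilon;\mathbb{R}^3)$; this is standard (Poincaré lemma / De Rham), using that $(B_R'\setminus B_\varepsilon')\setminus L_\varepsilon$ is simply connected and has Lipschitz boundary so the potential lies in $W^{1,2}$. Second, apply Corollary~\ref{rigiditybis} with the radii $R$, $\varepsilon$ and height $hR$, obtaining a rotation $Q$ and the constant $c=c(h)$ independent of $\varepsilon$. Third, observe $\|\cdot\|_{L^2(T_\varepsilon)}=\|\cdot\|_{L^2(\tilde T_\varepsilon)}$ for both sides and conclude.

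The only genuinely delicate point — and it is the one already absorbed into Corollary~\ref{rigiditybis} and ultimately into Lemma~\ref{rigidity} — is that the rigidity constant must not degenerate as $\varepsilon\to 0$, i.e. as the central hole shrinks. This is precisely why one cannot simply quote the classical Friesecke–James–Müller estimate on $\tilde T_\varepsilon$: the geometric constant there would blow up. Here that obstacle has already been overcome in Lemma~\ref{rigidity} by the extension-and-interpolation argument (extending $u$ across the hole via Lemma~2.6 of \cite{extension} on a chain of small annular cylinders and gluing with a partition of unity), and in Corollary~\ref{rigiditybis} by the four-sector decomposition that handles the additional slit. So at the level of Corollary~\ref{rigiditytris} there is no remaining difficulty: it is a one-line translation from ``gradient'' language to ``curl-free strain'' language, with the $\varepsilon$-uniformity inherited for free. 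I would state the proof essentially as: choose $u$ with $\nabla u=\beta$ on $\tilde T_\varepsilon$, apply Corollary~\ref{rigiditybis}, and use that $L_\varepsilon$ has zero measure.
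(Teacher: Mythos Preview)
Your proposal is correct and is essentially identical to the paper's own proof: pass to the simply connected cut cylinder $\tilde T_\varepsilon$, write $\beta=\nabla u$ there, and apply Corollary~\ref{rigiditybis}. Your additional remarks---that the circulation condition plays no role in the inequality itself and that the $\varepsilon$-uniformity of the constant is entirely inherited from Lemma~\ref{rigidity} and Corollary~\ref{rigiditybis}---are accurate and worth noting, even though the paper does not spell them out.
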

\begin{proof} Let $\tilde T_\varepsilon$ be the cut cylinder as defined in \eqref{cutcylinder} and let $\beta$ be as in the statement. Then $\curl\beta=0$ in the simply connected domain $\tilde T_\varepsilon$, therefore there exists $u\in W^{1,2}(\tilde T_\varepsilon;\R^3)$ such that $\beta=\nabla u$ in $\tilde T_\varepsilon$. We conclude by applying Corollary \ref{rigiditybis} to the function $u$. 
\end{proof}

Analogously to the 2-dimensional case derived in \cite{s.z.}, Corollary \ref{rigiditytris} points out that any elastic strain $\beta$ whose curl is concentrated on the vertical axis of a cylinder with multiplicity $\varepsilon b$ and whose energy tends to zero, is close to a constant rotation $Q\in SO(3)$ in $L^2(T_\varepsilon;\R^{3\times 3})$. This suggests that in the limit as $\varepsilon\to0$ the energy density $W(\beta)$ linearises near such rotation, or, equivalently by frame indifference, $W(Q^T\beta)$ linearises near the identity $I$. This will imply that the nonlinear energy of a straight dislocation $\mu_{b,t}$ on a hollow cylinder is asymptotically comparable (as the inner radius goes to zero) to the self-energy \eqref{selfen1} where the normalised Burgers vector is rotated by $Q^T$.\\
In order to deal with this additional variable that appears in the asymptotic it is convenient to define an auxiliary cell problem formula which describes the nonlinear elastic energy of a straight dislocation in a cylinder with the constraint that the elastic strain is close to a fixed rotation. 

\subsection{Nonlinear Auxiliary Cell Problem Formula}\label{aux-cell}

For every $Q\in SO(3)$, $b\in\mathbb{R}^3$, $t\in S^2$, $R>r>0$, $h>0$ and $\lambda>0$ we consider the variational problem

\begin{align}
    &\Psi_\lambda^{nl}(Q,b,t,h,r,R):=\frac{1}{hr^2\log\frac{R}{r}}\inf\biggl\{ \int_{T_h^r}W(\beta)+\lambda|\beta-Q|^2dx:\beta\in L^2(T_h^r;\mathbb{R}^{3\times3}),\notag\\&\,\,\curl\beta=0\,\text{in $T_h^r$},\,
     \int_0^{2\pi}\beta(\Phi_t(\rho,\theta,z))Q_te_\theta\rho \ d\theta=rb\,\text{for all}\,\rho\in(r,R),\,z\in(0,h)
    \biggr\},
    \label{nonlinearcell}
\end{align}
where we recall $T_h^r:=Q_t((B'_R\setminus B'_r)\times(0,h))$.
The dependence on $Q\in SO(3)$ introduced in this auxiliary cell problem formula is needed since, a priori, due to the frame indifference of the energy, the latter can linearise close to any rotation giving rise to different optimal energies. This dependence which helps to control the convergence will not appear in the $\Gamma$-limit and will be removed by taking $\lambda\to0$.
\begin{oss} By scaling, it holds
\begin{equation}\label{scalingg}
    \Psi_\lambda^{nl}(Q,b,t,k h,k r,k R)=\Psi_\lambda^{nl}(Q,b,t,h,r,R)\quad\text{for all $k>0$}.
\end{equation}
Indeed, for every $\beta$ admissible competitor for $\Psi_\lambda^{nl}(Q,b,t,h,r,R)$, the function $\tilde{\beta}(x):=\beta(x/k)$ is admissible for $\Psi_\lambda^{nl}(Q,b,t,k h,k r,k R)$, since $\int_0^{2\pi}\tilde{\beta}(\Phi_t(\rho,\theta,z))Q_te_\theta\ d\theta=krb$.\\
Furthermore, by the frame indifference of $W$, we have
\begin{equation}
    \Psi_\lambda^{nl}(Q,b,t,h,r,R)= \Psi_\lambda^{nl}(I,Q^Tb,t,h,r,R)\quad \forall Q\in SO(3).\label{frameindif}
\end{equation}
\end{oss} 
The main result of this section regards the asymptotic analysis of the cell problem, more precisely, we show that $\Psi_\lambda^{nl}(Q,b,t,h,r,R)$ converges to $\Psi_0(Q^Tb,t)$ as $r\to0$, $h\to\infty$, $\lambda\to0$. 
\begin{lem}[Lower Bound]
\label{lowerbound}
There exists a constant $C>0$ such that, for every $Q\in SO(3)$, $b\in\mathbb{R}^3$, $t\in S^2$, $h\ge R>0$, and $\lambda>0$, it holds
\begin{equation*}
    \liminf_{r\to0}\Psi_\lambda^{nl}(Q,b,t,h,r,R)\ge\Psi_0(Q^Tb,t)-C|b|^2\frac{R}{h}.
\end{equation*}
\end{lem}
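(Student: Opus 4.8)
The plan is to reduce the nonlinear cell problem to the linear one by exploiting the rigidity estimate (Corollary \ref{rigiditytris}), which forces a near-optimal strain to be $L^2$-close to some constant rotation, and then to identify that rotation with $Q$ up to a controlled error and linearise. Fix $r$ small and let $\beta=\beta_r$ be a competitor for $\Psi_\lambda^{nl}(Q,b,t,h,r,R)$ that is almost optimal, so that $\int_{T_h^r}W(\beta)+\lambda|\beta-Q|^2\,dx$ is within $o(1)$ of $hr^2\log\frac{R}{r}\,\Psi_\lambda^{nl}$. By scaling (rescale by $1/r$) we may pass to the normalised cylinder with inner radius $1$ and Burgers multiplicity $rb$ of order $r$; on this domain $\curl\beta=0$, the circulation constraint holds, and $\|\dist(\beta,SO(3))\|_{L^2}^2\le C^{-1}\int W(\beta)$ is small. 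Corollary \ref{rigiditytris} then yields a rotation $\bar Q\in SO(3)$ with $\|\beta-\bar Q\|_{L^2}\le c\|\dist(\beta,SO(3))\|_{L^2}$; combining with the growth bound \eqref{quadraticgrowth} gives $\|\beta-\bar Q\|_{L^2}^2\le C\int W(\beta)$. The penalty term $\lambda|\beta-Q|^2$ and the triangle inequality then force $|\bar Q-Q|$ to be small, of the order $\lambda^{-1/2}$ times the square root of the energy, divided by the measure of the domain — this is where the extra parameter $\lambda$ earns its keep, since without it one cannot pin down $\bar Q$.

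Next I would linearise $W$ near $\bar Q$. Write $\beta=\bar Q(I+\eta)$ with $\eta:=\bar Q^T\beta-I$; by frame indifference $W(\beta)=W(I+\eta)$, and $\|\eta\|_{L^2}$ is small. Using the $C^2$ regularity of $W$ near $SO(3)$ together with $W(I)=0$, $\frac{\partial W}{\partial F}(I)=0$ (which follows from $W\ge0$ and $W(I)=0$), and \eqref{jacob}, one gets the pointwise Taylor expansion $W(I+\eta)\ge \frac12\mathbb{C}\eta:\eta - \omega(|\eta|)|\eta|^2$ with $\omega(s)\to0$ as $s\to0$; integrating and using the smallness of $\|\eta\|_{L^\infty}$-in-$L^2$-averaged sense (more precisely, splitting the domain into where $|\eta|$ is small and where it is large, the latter controlled by the quadratic lower bound) gives $\int_{T_h^r}W(\beta)\,dx\ge (1-o(1))\int_{T_h^r}\frac12\mathbb{C}\eta:\eta\,dx - o(r^2 h\log\tfrac Rr)$. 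Now $\eta=\bar Q^T\beta-I$ is curl-free and satisfies a circulation identity $\int_0^{2\pi}\eta(\Phi_t(\rho,\theta,z))Q_te_\theta\rho\,d\theta = \bar Q^T(rb)$ (the $-I$ integrates to zero against $e_\theta$), so after dividing by $r$ and scaling, $\eta/r$ is admissible for the linear cell problem $\Psi(\bar Q^T b, t, h, 1, R)$ on the hollow cylinder — hence $\int\frac12\mathbb{C}\eta:\eta\,dx\ge r^2 h\log\frac Rr\,\Psi(\bar Q^Tb,t,h,1,R)$.

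Finally I would pass to the limit $r\to0$. By Lemma \ref{cgolem} (with $M$ fixed, say $M=R/1$ so that $MR\le h$ using $h\ge R$... here one uses $h\ge R$ and the explicit bound $\Psi(b,t,h,r,R)\ge(1-\frac cM-\omega_M(r/R))\Psi_0(b,t)$ with the choice balancing the $R/h$ loss), the linear cell energy is bounded below by $\Psi_0(\bar Q^Tb,t)$ minus an error of order $|b|^2 R/h$ plus $\omega_M(1/R)$; letting $r\to0$ kills $\omega_M(1/R)\to$ a fixed small quantity, and optimising $M$ against $R/h$ produces the stated $-C|b|^2 R/h$. It remains to replace $\bar Q$ by $Q$: since $\Psi_0(\cdot,t)$ is quadratic and continuous in its first argument (Remark \ref{remark1}) and $|b|$ is bounded, $|\Psi_0(\bar Q^Tb,t)-\Psi_0(Q^Tb,t)|\le C|b|^2|\bar Q-Q|$, and we showed $|\bar Q-Q|\to0$ as $r\to0$ (for fixed $\lambda$) because the optimal energy stays bounded while $\lambda$ is fixed; taking $\liminf_{r\to0}$ disposes of this term entirely. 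The main obstacle is the first step: making the rigidity-plus-penalty argument quantitative enough to show $\bar Q\to Q$ and simultaneously to carry the linearisation error uniformly, while keeping all constants independent of $r$; one must be careful that the rigidity constant from Corollary \ref{rigiditytris} depends only on $h/R$ (hence on the fixed shape ratio after scaling) and not on the vanishing inner radius, which is exactly the content of Lemma \ref{rigidity}.
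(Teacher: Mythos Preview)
Your overall strategy is natural, but the linearisation step has a genuine gap. You claim that by splitting into $\{|\eta|\le\delta\}$ and $\{|\eta|>\delta\}$ and using the quadratic lower bound on the latter you obtain
\[
\int_{T_h^r}W(\beta)\,dx\ \ge\ (1-o(1))\int_{T_h^r}\tfrac12\mathbb{C}\eta:\eta\,dx\ -\ o\bigl(r^2 h\log\tfrac Rr\bigr).
\]
This does not follow. From the $\lambda$-penalty (rigidity is in fact unnecessary here, since the penalty already gives $\|\beta_r-Q\|_{L^2}^2\le C\lambda^{-1}hr^2\log\frac Rr$) you only get $\|\eta_r\|_{L^2(T_h^r)}^2\le Chr^2\log\frac Rr$, which is of the \emph{same} order as the energy you are trying to bound from below. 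Nothing prevents a fixed fraction of this $L^2$ mass from sitting on $\{|\eta_r|>\delta\}$, and on that set the Taylor remainder $\sigma(\eta_r)=W(I+\eta_r)-\tfrac12\mathbb C\eta_r:\eta_r$ is in general comparable to $\tfrac12\mathbb C\eta_r:\eta_r$; the growth condition $W\ge C_1\dist^2(\cdot,SO(3))$ does not dominate $\mathbb C\eta:\eta$ for $|\eta|$ bounded away from zero (take $I+\eta\in SO(3)$ far from $I$: then $W=0$ while $\mathbb C\eta:\eta>0$). So the error term you discard may swallow a positive fraction of the leading term, and the sharp constant $\Psi_0$ is lost.

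The paper overcomes this by a dyadic decomposition of $T_h^r$ into annular cylinders $C_i^k$ at radial scales $R\delta^k$, stopping at a smallest scale $R\delta^{\tilde k_r}\gg r$ (see \eqref{ordinedigrandezza}). Each $C_i^k$ is rescaled to the fixed domain $(B_1'\setminus B_\delta')\times(0,h/R)$; the rescaled field $\tilde\eta_j$ is then \emph{bounded} in $L^2$ thanks to the $\lambda$-penalty (this is the actual role of $\lambda$: providing compactness at each dyadic scale, not pinning down a global rotation), while the effective Taylor parameter on that block is $\lambda_j^k=r_j/(R\delta^{k-1})$, which tends to zero uniformly in $k$ because one stays well above the core radius. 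One then truncates at the diverging level $r_j^{(s-1)/2}$, passes to a weak $L^2$ limit, and uses lower semicontinuity together with Lemma \ref{cgolem} on the fixed domain; a contradiction argument turns this into the uniform estimate \eqref{claim}. The dyadic rescaling is exactly what separates the two scales your single-scale argument conflates: after rescaling the strain is $O(1)$ in $L^2$ while the small parameter multiplying it in the Taylor expansion goes to zero, and that is what makes the remainder negligible.
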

\begin{proof}
Fix $Q$, $b$ and $t$.
To simplify the notation we assume $t=e_3$ (the general case being similar) so that $T_h^r=(B'_R\setminus B'_r)\times(0,h)$.\\
For every $0<r<R$, let $\beta_r$ be admissible for the definition of $\Psi_\lambda^{nl}$ (see \eqref{nonlinearcell}), and be such that
\begin{equation}
    \frac{1}{hr^2\log\frac{R}{r}}\int_{T_h^r}W(\beta_r)+\lambda|\beta_r-Q|^2dx
    \le\Psi_\lambda^{nl}(Q,b,t,h,r,R)+|b|^2\frac{R}{h}.\label{stima1}
\end{equation}
Next fix $\delta\in(0,1/2)$ and divide $T_h^r$ into dyadic cylinders
 \begin{equation*}
    C_i^k:=(B'_{R\delta^{k-1}}\setminus B'_{R\delta^k})\times((i-1)h\delta^{k-1},ih\delta^{k-1}),
\end{equation*}
with $k=1,...,\tilde{k}_r$ where $\tilde{k}_r:=\lfloor k_r\rfloor+1$,
\begin{equation}
    k_r:=s\frac{\log \frac{R}{r}}{|\log\delta|} \quad\text{for some fixed $s\in(0,1)$,}
    \label{kerre}
\end{equation}
and $i=1,...,i_k:=\lfloor1/\delta^{k-1}\rfloor$. 
Since $\delta$, $R$, and $s$, are fixed, the smallest inner radius of the dyadic annuli, namely $R\delta^{\tilde{k}_r}$, is much bigger than $r$ as $r\to0$, indeed 
\begin{equation}\label{ordinedigrandezza}
    R\delta^{\tilde{k}_r}\ge R\delta^{k_r+1}=\delta R^{1-s}r^s\gg r.
\end{equation}
In particular
\begin{equation}
  \frac{1}{hr^2\log\frac{R}{r}}\int_{T_h^r}W(\beta_r)+\lambda|\beta_r-Q|^2dx\ge\frac{1}{\log\frac{R}{r}}\sum_{k=1}^{\tilde{k}_r}\sum_{i=1}^{i_k}\frac{1}{h}\int_{C_i^k}\frac{W(\beta_r)+\lambda|\beta_r-Q|^2}{r^2}dx.  \label{dim1}
\end{equation}
We stress that the cylinders $C_i^k$ (and the corresponding indices $i$ and $k$) depend on $r$. We will denote by $I(r)$ the set of such pairs of indices, i.e.,
\begin{equation}
    I(r)=\{(i,k)\,:\,i=1,...,i_k,\,k=1,...,\tilde{k}_r\}.\label{indici}
\end{equation}
For every $0<r<R$ and $k=1,...,\tilde{k}_r$ we denote by $i(k,r)$ an index $i$ such that
\begin{equation}
    \label{dim1bis}
    \sum_{i=1}^{i_k}\int_{C_i^k}W(\beta_r)+\lambda|Q -\beta_r|^2dx\ge i_k
    \int_{C^k_{i(k,r)}}W(\beta_r)+\lambda|Q-\beta_r|^2dx.
\end{equation}
Let $M:= h/R$ and let $\omega_M\colon(0,\infty)\to(0,\infty)$ be the function given in the statement of Lemma \ref{cgolem}. 

We will show that there exist a sequence of positive numbers $\sigma_r$, infinitesimal for $r\to0$, and a positive constant $C>0$, such that
\begin{equation}\begin{split}
    \frac{1}{h\delta^{k-1}}\int_{C^k_{i(k,r)}}&\frac{W(\beta_r)+\lambda|\beta_r-Q|^2}{r^2}dx\\
    &\ge\left|\log\delta\right|\Psi_0(Q^Tb,t)\Big(1-\frac{C}{M}-\omega_M(\delta)\Big)-\sigma_r,
    \end{split}
     \label{claim}
\end{equation}
for every $r>0$ and for every $k=1,...,\tilde{k}_r$. Equivalently we will show that
\begin{align}
    \label{claimbis}
    \lim_{r\to0}\bigg(\left|\log\delta\right|&\Psi_0(Q^Tb,t)\Big(1-\frac{C}{M}-\omega_M(\delta)\Big)\nonumber\\
    &-\frac{1}{h\delta^{k-1}}\int_{C_{i(k,r)}^k}\frac{W(\beta_r)+\lambda|\beta_r-Q|^2}{r^2}dx\bigg)_+=0,
\end{align}
uniformly in $k$, where $(a)_+$ denotes the positive part of $a$. We establish \eqref{claimbis} arguing by contradiction. The argument is based on the following claim which is a variation of the one used in \cite{s.z.} for the proof of the lower bound.\\
\noindent {\it Claim:
Given a sequence $r_j\to 0$, assume that there exists a constant $\tilde C>0$ such that
\begin{equation}\label{ipotesiclaim}
    \frac{1}{h\delta^{{k_j}-1}}\int_{C^{k_j}_{i_j}}\frac{W(\beta_j)+\lambda|\beta_j-Q|^2}{r_j^2}dx< \tilde C<+\infty,
\end{equation}
for some sequence $(i_j,k_j)\in I(r_j)$ as defined in \eqref{indici},  $\beta_j\in L^2(C_{i_j}^{k_j};\mathbb{R}^{3\times3})$ with 
$\curl\beta_j=0$ in $C_{i_j}^{k_j}$, and 
\begin{equation}
    \int_0^{2\pi}\beta_j(\rho,\theta,z)e_\theta\rho\ d\theta=r_jb,\label{ipotesiclaim2}
\end{equation}
for $(\rho,z)\in(R\delta^{k_j},R\delta^{k_j-1})\times((i-1)h\delta^{k_j-1},ih\delta^{k_j-1})$. Then it holds
\begin{multline}
  \liminf_{j\to +\infty}  \frac{1}{h\delta^{{k_j}-1}}\int_{C^{k_j}_{i_j}}\frac{W(\beta_j)+\lambda|\beta_j-Q|^2}{r_j^2}dx\\\geq \left|\log\delta\right|\Psi_0(Q^Tb,t)\Big(1-\frac{C}{M}-\omega_M(\delta)\Big).\label{tesiclaim}
\end{multline}
}
\bigskip

Before proving the Claim, we show that it implies \eqref{claimbis} and how this concludes the proof of the Lemma. If, by contradiction, \eqref{claimbis} does not hold, then there exists an $\varepsilon_0>0$ such that for all $\sigma>0$ there exists $r_\sigma\le\sigma$ and $k_\sigma$ such that 
\begin{align}\nonumber
    \bigg(\left|\log\delta\right|\Psi_0(Q^Tb,t)&\Big(1-\frac{C}{M}-\omega_M(\delta)\Big)\\
    &-\frac{1}{h\delta^{k_\sigma-1}}\int_{C_{i(k_\sigma,r_\sigma)}^{k_\sigma}}\frac{W(\beta_{r_\sigma})+\lambda|\beta_{r_\sigma}-Q|^2}{r_\sigma^2}dx\bigg)_+>\varepsilon_0,
    \label{contraddizione}
\end{align}
and therefore we can assume that there exist a sequence $r_j\to0$ and $k_j$ such that for the corresponding $i_j:=i(k_j,r_j)$, \eqref{ipotesiclaim} holds with
\begin{equation*}
    \tilde{C}=\left|\log\delta\right|\Psi_0(Q^Tb,t)\Big(1-\frac{C}{M}-\omega_M(\delta)\Big)-\varepsilon_0,
\end{equation*}
and $\beta_j:=\beta_{r_j}$.
Namely 
\begin{equation}
    \frac{1}{h\delta^{k-1}}\int_{C^{k_j}_{i_j}}\frac{W(\beta_j)+\lambda|\beta_j-Q|^2}{r_j^2}dx\le|\log\delta|\Psi_0(Q^Tb,t)\Big(1-\frac{C}{M}-\omega_M(\delta)\Big)-\varepsilon_0.\label{contr2}
\end{equation}
Combining this with \eqref{tesiclaim} we get the contradiction and we then prove \eqref{claim}.

Therefore by (\ref{dim1}), \eqref{dim1bis} and (\ref{claim}) we have
\begin{align*}
\liminf_{r\to0}\frac{1}{hr^2\log\frac{R}{r}}
\int_{T_h^r}&W(\beta_r)+\lambda|\beta_r-Q|^2dx\\
&\ge\liminf_{r\to0}\frac{1}{\log\frac{R}{r}}\sum_{k=1}^{\tilde{k}_r}\sum_{i=1}^{i_k}\frac{1}{h}\int_{C_i^k}\frac{W(\beta_r)+\lambda|\beta_r-Q|^2}{r^2}dx\\
&\ge \liminf_{r\to0}\frac{(1-\delta)}{\log\frac{R}{r}}\sum_{k=1}^{\tilde{k}_r}\frac{1}{h\delta^{k-1}}\int_{C^k_{i(k,r)}}\frac{W(\beta_r)+\lambda|\beta_r-Q|^2}{r^2}dx\\
&\ge \liminf_{r\to0} s(1-\delta)\Psi_0(Q^Tb,t)(1-\frac{C}{M}-\omega_M(\delta))-\sigma_r\\
&=s(1-\delta)\Psi_0(Q^Tb,t)(1-\frac{C}{M}-\omega_M(\delta)),
\end{align*}
where the quantity $(1-\delta)$ comes out from the fact that $\delta^{k-1}\cdot i_k=\delta^{k-1}\cdot\lfloor1/\delta^{k-1}\rfloor\ge(1-\delta)$.
Since $\delta\in(0,1/2)$ and $s\in(0,1)$ are arbitrary we can pass to the limit as $\delta\to0$ and $s\to1$ to get
\begin{equation*}
\liminf_{r\to0}\Psi_\lambda^{nl}(Q,b,t,h,r,R)\ge\Psi_0(Q^Tb,t)\Big(1-C\frac{R}{h}\Big)\ge \Psi_0(Q^Tb,t)-C|b|^2\frac{R}{h},
\end{equation*}
where the last inequality follows by (\ref{selfen2}) and the statement is proved.

It remains to prove the Claim. To simplify the notation we drop the dependence of $i$ and $k$ on $j$. We proceed in steps. First we scale the problem to a fixed cylinder (Step 1), then we estimate separately the quadratic (Step 2) and the nonlinear (Step 3) terms. \\

\textit{Step 1: Scaling and Compactness.} 
Here we reduce the analysis to a fixed cylinder which, with a little abuse of notation, we denote by
\[T_{h/R}^\delta:=(B'_1\setminus B'_\delta)\times(0,h/R)\]
and we consider the scaled function
\[\tilde{\eta}_j(x):=R\delta^{k-1}\eta_j\bigl(R\delta^{k-1}(x+(i-1)\frac{h}{R}e_3)\bigr)\quad \text{for $x\in T^\delta_{h/R}$},\]
where
\begin{equation}
\eta_j:=\frac{Q^T\beta_j-I}{r_j}.
\label{etaj}
\end{equation}

In this Step we show that up to a subsequence
\begin{equation}\label{add1}
\tilde{\eta}_j\rightharpoonup \tilde{\eta}\quad\text{in $L^2(T_{h/R}^\delta;\mathbb{R}^{3\times3})$,}
\end{equation}
and that $\tilde\eta$ is admissible for the cell problem defining $\Psi(Q^Tb,t,h/R,\delta,1)$ in \eqref{linearcell}.

Indeed from \eqref{ipotesiclaim} we have
\begin{equation}
    \frac{\lambda}{h\delta^{k-1}}\int_{C^k_i}\frac{|\beta_j-Q|^2}{r_j^2}dx< \tilde{C},\label{claim2}
\end{equation}
and then by a change of variable we have
\begin{equation*}
\int_{T^\delta_{h/R}}|\tilde{\eta}_j|^2dx=\frac{1}{R\delta^{k-1}}\int_{C_i^k}|\eta_j|^2dx\le\frac{h}{R\lambda}\tilde{C},
\end{equation*}
which implies the weak convergence in \eqref{add1} up to a subsequence. To verify the admissibility of the limiting function we recall that by \eqref{ipotesiclaim2} it holds
\begin{equation*}
\int_0^h\int_r^R\varphi(\rho,z)  \int_0^{2\pi}\eta_j(\rho,\theta,z)e_\theta\rho\ d\theta d\rho dz=\int_0^h\int_r^R\varphi(\rho,z)Q^Tb\ d\rho dz,
\end{equation*}
for all $\varphi\in L^2((R\delta^{k},R\delta^{k-1})\times((i-1)h\delta^{k-1},ih\delta^{k-1}))$, and therefore
\begin{equation}\label{add2}
\int_0^{h/R}\int_\delta^1\varphi(\rho,z)  \int_0^{2\pi}\tilde\eta_j(\rho,\theta,z)e_\theta\rho\ d\theta d\rho dz=\int_0^h\int_r^R\varphi(\rho,z)Q^Tb\ d\rho dz,
\end{equation}
for all $\varphi\in L^2((\delta,1)\times(0,h/R))$. The conclusion follows by taking the limit as $j\to\infty$, obtaining 
\begin{equation}\label{claim3}
\int_0^{h/R}\int_\delta^1\varphi(\rho,z)  \int_0^{2\pi}\tilde\eta(\rho,\theta,z)e_\theta\rho\ d\theta d\rho dz=\int_0^h\int_r^R\varphi(\rho,z)Q^Tb\ d\rho dz,
\end{equation}
for all $\varphi\in L^2((\delta,1)\times(0,h/R))$, and recalling  \eqref{add3}.
\bigskip

\textit{Step 2: Estimate of the quadratic term.} 
\\
By the lower semicontinuity of the $L^2$ norm it follows
\begin{equation}
    \label{aggiunta1}
  \liminf_{j\to\infty}  \int_{{T}^\delta_{h/R}}|\tilde{\eta}_j|^2dx\ge \int_{{T}^\delta_{h/R}}|\tilde{\eta}|^2dx.
\end{equation}
Using Jensen's inequality, \eqref{claim3}, and \eqref{selfen2}, we get the following estimate
\begin{equation*}
    \int_{{T}^\delta_{ h/R}}|\tilde\eta|^2dx\ge\int_0^{\frac hR}\int_\delta^1\frac{1}{2\pi\rho}
    \left|\int_0^{2\pi}\tilde\eta(\rho,\theta,z) e_\theta\rho\ d\theta \right|^2d\rho dz
\end{equation*} 
\begin{equation}
    \ge\frac{|Q^Tb|^2}{2\pi} \frac{h}{R}|\log\delta|\ge \Psi_0(Q^Tb,t)\frac{|\log\delta|}{2\pi c_1}\frac{h}{R}.\label{aggiunta}
\end{equation}
Therefore we obtain the following estimate of the quadratic part of the energy
\begin{align}
   \liminf_{j\to\infty} \frac{\lambda}{h\delta^{k-1}}\int_{C^k_i}\frac{|\beta_j-Q|^2}{r_j^2}dx&= \liminf_{j\to\infty}\lambda\frac{R}{h}\int_{{T}^\delta_{h/R}}|\tilde\eta_j|^2dx\notag\\
&\ge \lambda\frac{R}{h}\int_{{T}^\delta_{h/R}}|\tilde{\eta}|^2dx\ge |\log\delta|\lambda C \Psi_0(Q^Tb,t).\label{aggiunta2}
\end{align}

\textit{Step 3: Estimate of the nonlinear term.} 
\\
We will show, following the ideas in \cite{s.z.}, that the scaled problem linearises in the limit giving rise to the expected estimate. 
From \eqref{kerre} we have
\[\lambda_j^k:=\frac{r_j}{R\delta^{k-1}}\le\frac{r_j}{R\delta^{\tilde{k}_r}}\le\delta^{-1}\left(\frac{r_j}{R}\right)^{1-s},\]
and then the sequence $\lambda_j^k$ is infinitesimal for $j\to\infty$.
We also consider the sequence $\chi_j$ of characteristic functions
\begin{equation}
    \label{claim4}
    \chi_j:=\begin{cases}1&\text{if $|\tilde{\eta}_j|\le r_j^{(s-1)/2}$}\\ 0&\text{otherwise in ${T}^\delta_{h/R}$.}
    \end{cases}
\end{equation}
By the boundedness of $\tilde{\eta}_j$ in $L^2({T}^\delta_{h/R};\mathbb{R}^{3\times3})$ it follows that $\chi_j\to1$ in measure, so that, by (\ref{add1}), $\chi_j\tilde{\eta}_j\rightharpoonup\tilde{\eta}$ in $L^2({T}^\delta_{h/R};\mathbb{R}^{3\times3})$.
By a Taylor expansion, using assumption (ii) and (iv) on $W$,
we get 
$$
W(I+F)=\frac{1}{2}\mathbb{C}F:F+\sigma(F),
$$ 
where $\mathbb{C}:=\frac{\partial^2W}{\partial F^2}(I)$ and $\sigma(F)/|F|^2\to0$ as $|F|\to0$. Setting $\omega(t):=\sup_{|F|\le t}|\sigma(F)|$, we have
\begin{equation}
    \label{taylorexp}
    W(I+rF)\ge\frac{1}{2}r^2\mathbb{C}F:F-\omega(r|F|),
\end{equation}
with $\omega(t)/t^2\to0$ as $t\to0$.
Thus, by the frame indifference of $W$ and using (\ref{taylorexp}) and \eqref{etaj} we obtain
\begin{multline}
    \frac{1}{h\delta^{k-1}}\int_{C_i^k}\frac{W(\beta_j)}{r_j^2}\ dx=\frac{1}{h\delta^{k-1}}\int_{{C}_i^k}\frac{W(I+r_j\eta_j)}{r_j^2}\ dx\ge \frac{R}{h}\int_{{T}^\delta_{h/R}}\chi_j\frac{W(I+\lambda_j^k\tilde{\eta}_j)}{(\lambda_j^k)^2}\ dx\\
\ge\frac{R}{h}\int_{{T}^\delta_{h/R}}\left(\frac{1}{2}\mathbb{C}(\chi_j\tilde{\eta}_j):(\chi_j\tilde{\eta}_j)-\chi_j\frac{\omega(\lambda_j^k|\tilde{\eta}_j|)}{(\lambda_j^k)^2}\right)\ dx.
    \label{claim5}
\end{multline}
Again the first term in the right-hand side of \eqref{claim5} is lower semicontinuous with respect to the weak $L^2({T}^\delta_{h/R};\mathbb{R}^{3\times3})$-convergence, that is
\begin{equation}
    \label{claim6}
\liminf_{j\to\infty} \frac{R}{h}\int_{{T}^\delta_{h/R}}\frac{1}{2}\mathbb{C}(\chi_j\tilde{\eta}_j):(\chi_j\tilde{\eta}_j)\ dx\ge   \frac{R}{h}\int_{{T}^\delta_{h/R}}\frac{1}{2}\mathbb{C}\tilde{\eta}:\tilde{\eta}\ dx,
\end{equation}
while the last term in (\ref{claim5}) converges to zero as $j\to\infty$. Indeed we can rewrite its integrand as 
\begin{equation*}
    \chi_j\frac{\omega(\lambda_j^k|\tilde{\eta}_j|)}{(\lambda_j^k)^2}=|\tilde{\eta}_j|^2\cdot \chi_j\frac{\omega(\lambda_j^k|\tilde{\eta}_j|)}{(\lambda_j^k|\tilde{\eta}_j|)^2},
\end{equation*}
which is the product of a bounded sequence in $L^1({T}^\delta_{h/R})$ and a sequence converging to zero in $L^\infty({T}^\delta_{h/R})$, since $\lambda_j^k|\tilde{\eta}_j|\le\delta^{-1}R^{s-1}r_j^{(1-s)/2}$ for every $k$, when $\chi_j\ne0$. This combined with \eqref{claim6} yields
\begin{equation*}
 \liminf_{j\to\infty} \frac{1}{h\delta^{k-1}}\int_{C_i^k}\frac{W(\beta_j)}{r_j^2}dx\ge\frac{R}{h}\int_{\tilde{T}^\delta_{h/R}}\frac{1}{2}\mathbb{C}\tilde{\eta}:\tilde{\eta}\ dx.
\end{equation*}
 Finally, since $\tilde\eta$ is admissible for $\Psi(Q^Tb,t,h/R,\delta,1)$, recalling the estimate in Lemma \ref{cgolem} with $M:= \frac hR$, we get
\begin{align}
\liminf_{j\to\infty}&\frac{1}{h\delta^{k-1}}\int_{C_i^k}\frac{W(\beta_j)}{r_j^2}dx\ge \left|\log\delta\right|\Psi(Q^Tb,t,h/R,\delta,1)\notag\\
&\ge\left|\log\delta\right|\Psi_0(Q^Tb,t)\Big(1-\frac{C}{M}-\omega_M(\delta)\Big).\label{claim8}
\end{align}
From \eqref{claim8} and \eqref{aggiunta2} it follows
\begin{align*}
   \liminf_{j\to\infty} \frac{1}{h\delta^{k-1}}\int_{C_i^k}&\frac{W(\beta_j)+\lambda|\beta_j-Q|^2}{r_j^2}dx\\
   &\ge|\log\delta|\Psi_0(Q^Tb,t)\Big(1+\lambda C-\frac{C}{M}-\omega_M(\delta)\Big)\\
  & \ge |\log\delta|\Psi_0(Q^Tb,t)\Big(1-\frac CM-\omega_M(\delta)\Big).
\end{align*}

This concludes the proof of the Claim.

\end{proof}
\begin{lem}[Upper Bound]\label{upperbound} 
There exists a constant $C>0$ such that for every $Q\in SO(3)$, $b\in\mathbb{R}^3$, $t\in S^2$, $h\ge R>0$, it holds
\begin{equation*}
    \limsup_{r\to0}\Psi_\lambda^{nl}(Q,b,t,h,r,R)\le(1+\lambda C)\Psi_0(Q^Tb,t).
\end{equation*}
\end{lem}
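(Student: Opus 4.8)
The plan is to exhibit, for each small $r>0$, a near-optimal admissible competitor for \eqref{nonlinearcell} and to estimate its energy sharply. By the frame indifference relation \eqref{frameindif} it suffices to take $Q=I$ and prove $\limsup_{r\to0}\Psi_\lambda^{nl}(I,b',t,h,r,R)\le(1+\lambda C)\Psi_0(b',t)$ with $b':=Q^Tb$. As competitor I would use $\beta_r:=I+r\,\eta_{b',t}$ on $T_h^r$, with $\eta_{b',t}$ the explicit linear optimal strain from \eqref{eul2}--\eqref{selfen1}. First I would verify admissibility: since $\R t\cap T_h^r=\emptyset$, $\curl\beta_r=r\,\curl\eta_{b',t}=r\,\mu_{b',t}=0$ in $T_h^r$; and since $\int_0^{2\pi}f(\theta)\,d\theta=b'$ (see after \eqref{eul2}) while $\int_0^{2\pi}Q_te_\theta\,d\theta=0$, using the identity $\eta_{b',t}(\Phi_t(\rho,\theta,z))Q_te_\theta=\rho^{-1}f(\theta)$ one gets $\int_0^{2\pi}\beta_r(\Phi_t(\rho,\theta,z))Q_te_\theta\,\rho\,d\theta=rb'$ for every $\rho\in(r,R)$, $z\in(0,h)$, as required.

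Next I would treat the two terms of the energy separately. The penalisation term is immediate: by the homogeneity $\eta_{b',t}(\Phi_t(\rho,\theta,z))=\rho^{-1}\eta_{b',t}(\Phi_t(1,\theta,0))$ coming from \eqref{eul2}, one has $\lambda\int_{T_h^r}|\beta_r-I|^2dx=\lambda r^2\,h\log(R/r)\int_0^{2\pi}|\eta_{b',t}(\Phi_t(1,\theta,0))|^2 d\theta$, and by \eqref{eul3} and \eqref{selfen2} the angular integral is bounded by $C|b'|^2\le C\Psi_0(b',t)$; dividing by $hr^2\log(R/r)$ this produces exactly the term $\lambda C\Psi_0(b',t)$. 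For the elastic term the obstacle is that $r\,\eta_{b',t}$ has size of order $|b'|$ --- not infinitesimal --- near the line $\R t$, so one cannot Taylor-expand $W$ at $I$ on all of $T_h^r$. I would therefore split $T_h^r=A_r\cup B_r$ at an intermediate radius $\rho_0:=r|\log r|$, with $A_r:=T_h^r\cap\{\dist(\cdot,\R t)>\rho_0\}$ and $B_r$ the remaining thin core layer; note $r\ll\rho_0\ll R$ and $\log(R/\rho_0)/\log(R/r)\to1$ as $r\to0$.

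On $A_r$ one has $|r\,\eta_{b',t}|\le c|b'|/|\log r|\to0$ uniformly, so the Taylor expansion of $W$ at $I$ (cf. \eqref{taylorexp}, using (ii) and (iv)) gives $W(I+r\eta_{b',t})\le\tfrac12 r^2\,\mathbb{C}\eta_{b',t}:\eta_{b',t}+r^2|\eta_{b',t}|^2\,\omega(c|b'|/|\log r|)$ for a nonnegative $\omega$ with $\omega(s)\to0$ as $s\to0^+$; integrating, and using \eqref{selfen1}, which gives $\int_{T_h^r}\tfrac12\mathbb{C}\eta_{b',t}:\eta_{b',t}\,dx=\Psi_0(b',t)\,h\log(R/r)$, together with the computation of the previous paragraph, one bounds $\int_{A_r}W(\beta_r)\,dx$ by $(1+o(1))\,\Psi_0(b',t)\,r^2 h\log(R/r)$. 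On the core layer $B_r$ I would use only the crude upper bound \eqref{quadraticgrowth} and $\dist(I+r\eta_{b',t},SO(3))\le r|\eta_{b',t}|$, so that $\int_{B_r}W(\beta_r)\,dx\le C_2 r^2\int_{B_r}|\eta_{b',t}|^2dx= C_2 r^2\,h\log(\rho_0/r)\int_0^{2\pi}|\eta_{b',t}(\Phi_t(1,\theta,0))|^2 d\theta\le C r^2 h\,\Psi_0(b',t)\,\log|\log r|$, which is $o(r^2 h\log(R/r))$ precisely because $\log(\rho_0/r)=\log|\log r|$ is logarithmically negligible. Summing the two contributions, dividing by $hr^2\log(R/r)$, adding the penalisation estimate, and letting $r\to0$ yields $(1+\lambda C)\Psi_0(b',t)$; undoing the reduction via \eqref{frameindif} gives the statement. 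The main obstacle is exactly the treatment of the core layer just described, where the nonlinearity of $W$ is genuinely felt; everything else is a routine change of variables. As an alternative to using $\eta_{b',t}$ directly one could replace it on $T_h^r$ by the field $\tilde\eta_r$ furnished by Lemma \ref{estimatefromabove} and argue in the same way, at the price of an extra $L^2$-bound on $\tilde\eta_r$ for the penalisation term.
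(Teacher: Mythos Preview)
Your proof is correct and follows essentially the same route as the paper: the same competitor $\beta_r=Q(I+r\,\eta_{Q^Tb,t})$ (you reduce to $Q=I$ first via \eqref{frameindif}, which is equivalent), the same direct estimate of the $\lambda$-penalty via \eqref{eul3} and \eqref{selfen2}, and the same idea of splitting the hollow cylinder at an intermediate radius so that on the outer annulus $r\eta$ is uniformly small (Taylor expand) while on the thin inner annulus one only uses the quadratic upper growth of $W$.

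The one difference is the choice of intermediate radius. The paper splits at $r^{1-s}$ for a fixed $s\in(0,1)$: this makes the inner contribution bounded by $Cs$ after normalisation, and requires a final limit $s\to0$. Your choice $\rho_0=r|\log r|$ is sharper and avoids that auxiliary parameter: the inner contribution is $O(\log|\log r|/|\log r|)\to0$ directly, so you get the bound in a single limit. Both choices work; yours is a bit cleaner, at the cost of needing to note that $\rho_0\ll R$ eventually (which holds since $R>0$ is fixed).
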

\begin{proof}
Analogously to the previous lemma we perform the proof for $t=e_3$ (the general case is similar) so that $T_h^r:=(B'_R\setminus B'_r)\times(0,h)$.\\
We define the sequence
\begin{equation}
    \beta_r:=Q(I+r\eta_{Q^Tb,t})\quad\text{in $T_h^r$},
\end{equation}
where $\eta_{Q^Tb,t}$ is the solution to (\ref{eul2}) with $\mu_{b,t}=Q^Tb\otimes t\mathcal{H}^1\res\mathbb{R}t$.
Then $\beta_r$ is admissible for $\Psi_\lambda^{nl}(Q,b,t,h,r,R)$. We observe that from \eqref{eul3} we immediately get 
\begin{equation}
    \label{upperbound0}
    \int_{T_h^r}|\beta_r-Q|^2dx=\int_{T_h^r}|r\eta_{Q^Tb,t}|^2dx\le C|b|^2hr^2\log\frac{R}{r}.
\end{equation}
Hence by \eqref{selfen2} we have
\begin{align}\label{upper_new}
\lambda\int_{T_h^r}|\beta_r-Q|^2dx\leq\lambda C\Psi_0(Q^Tb,t)hr^2\log\frac{R}{r}.
\end{align}
Next, fix $s\in(0,1)$ and set $T_h^{r^{1-s}}:=(B'_R\setminus B'_{r^{1-s}})\times(0,h)$ and $\tilde{T}_h^r:=(B'_{r^{1-s}}\setminus B'_r)\times(0,h)$. In view of the frame indifference of $W$, we have 
\begin{equation*}
    \frac{1}{hr^2\log\frac{R}{r}}\int_{T_h^r}W(\beta_r)dx= \frac{1}{hr^2\log\frac{R}{r}}\int_{T_h^r}W(I+r\eta_{Q^Tb,t})dx
\end{equation*}
\begin{equation*}
    =\frac{1}{hr^2\log\frac{R}{r}}\int_{T_h^{r^{1-s}}}W(I+r\eta_{Q^Tb,t})dx+\frac{1}{hr^2\log\frac{R}{r}}\int_{\tilde{T}_h^r}W(I+r\eta_{Q^Tb,t})dx=:I_r^1+I_r^2.
\end{equation*}
We now estimate $I_r^1$ and $I_r^2$. Regarding $I_r^1$, by a Taylor expansion of $W$ near the identity we get
\begin{equation*}
    I_r^1=\frac{1}{h\log\frac{R}{r}}\int_{T_h^{r^{1-s}}}\frac{1}{2}\mathbb{C}\eta_{Q^Tb,t}:\eta_{Q^Tb,t}\ dx+\frac{1}{h}\int_{T_h^{r^{1-s}}}\frac{\sigma(r\eta_{Q^Tb,t})}{r^2\log\frac{R}{r}}\ dx,
\end{equation*}
where $\sigma(F)/|F|^2\to0$ as $|F|\to0$.
By (\ref{eul2}) and \eqref{selfen1} we deduce 
\begin{equation*}
    \int_{T_h^{r^{1-s}}}\frac{1}{2}\mathbb{C}\eta_{Q^Tb,t}:\eta_{Q^Tb,t}\ dx=\int_0^h\int_{r^{1-s}}^R\frac{1}{\rho^2}\Psi_0(Q^Tb,t)\rho \ d\rho dz=h\log\frac{R}{r^{1-s}}\Psi_0(Q^Tb,t),
\end{equation*}
and then
\begin{equation}
    \label{upperbound1}
    \limsup_{r\to0}\frac{1}{h\log\frac{R}{r}}\int_{T_h^{r^{1-s}}}\frac{1}{2}\mathbb{C}\eta_{Q^Tb,t}:\eta_{Q^Tb,t}\ dx
    \le\Psi_0(Q^Tb,t).
\end{equation}
Moreover setting $\chi_r:=\chi_{T_h^{r^{1-s}}}$ and again $\omega(t):=\sup_{|F|\le t}|\sigma(F)|$, we have
\begin{equation}\label{upperbound2}
    \lim_{r\to0}\left|\frac{1}{h}\int_{T_h^{r^{1-s}}}\frac{\sigma(r\eta_{Q^Tb,t})}{r^2\log\frac{R}{r}}dx\right|  \le  \lim_{r\to0}\frac{1}{h}\int_{B'_R\times(0,h)}\chi_r\frac{\omega(r|\eta_{Q^Tb,t}|)}{|r\eta_{Q^Tb,t}|^2}\cdot\frac{|r\eta_{Q^Tb,t}|^2}{r^2\log\frac{R}{r}}dx=0.
\end{equation}
In fact the above integrand is the product of a sequence converging to zero in $L^\infty(B'_R\times(0,h))$ and a bounded sequence in $L^1(B'_R\times(0,h))$ by \eqref{upperbound0}. Thus combining (\ref{upperbound1}) and (\ref{upperbound2}), we infer
\begin{equation}
    \label{upperbound3}
    \limsup_{r\to0}I_r^1\le \Psi_0(Q^Tb,t).
\end{equation}
Finally the growth assumption on $W$ and the definition of $\eta_{Q^Tb,t}$ gives
\begin{equation*}
    I_r^2\le\frac{C}{hr^2\log\frac{R}{r}}\int_{\tilde{T}_h^r}|r\eta_{Q^Tb,t}|^2dx\le C\frac{\log r^{-s}}{\log\frac{R}{r}},
\end{equation*}
then, as $s<1$, we get
\begin{equation}\label{upperbound4}
    \limsup_{r\to0}I_r^2\le Cs.
\end{equation}
In view of (\ref{upper_new}), (\ref{upperbound3}) and \eqref{upperbound4} we conclude
\begin{equation*}
    \limsup_{r\to0}\Psi_\lambda^{nl}(Q,b,t,h,r,R)\le(1+\lambda C) \Psi_0(Q^Tb,t)+Cs,
\end{equation*}
hence the thesis follows by the arbitrariness of $s\in(0,1)$.
\end{proof}
\begin{cor}
\label{cor1}
For any $Q\in SO(3)$, $b\in\mathbb{R}^3$, $t\in S^2$, $R>0$, one has
\begin{equation}
  \lim_{\lambda\to0}  \lim_{h\to\infty}\lim_{r\to0}\Psi_\lambda^{nl}(Q,b,t,h,r,R)=\Psi_0(Q^Tb,t).\label{upper+lower}
  \end{equation}
\end{cor}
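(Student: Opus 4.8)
The plan is to obtain the statement directly by combining the two matching asymptotic bounds of Lemma \ref{lowerbound} and Lemma \ref{upperbound} and then sending the three parameters to their limits \emph{in the prescribed order}, exploiting that the error term in the lower estimate is an $O(R/h)$ and the one in the upper estimate is an $O(\lambda)$, so that both vanish in the limit. No new construction is needed: all the analytic content — the scaling to a fixed cylinder, the separate treatment of the quadratic and nonlinear parts, the linearisation via the Taylor expansion of $W$, and the comparison with the linear cell problem through Lemma \ref{cgolem} — has already been carried out in the proofs of the two lemmas.

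Concretely, I would first fix $\lambda>0$ and $h\ge R$ and let $r\to0$. Lemma \ref{upperbound} gives
\[
\limsup_{r\to0}\Psi_\lambda^{nl}(Q,b,t,h,r,R)\le(1+\lambda C)\,\Psi_0(Q^Tb,t),
\]
and Lemma \ref{lowerbound} gives
\[
\liminf_{r\to0}\Psi_\lambda^{nl}(Q,b,t,h,r,R)\ge\Psi_0(Q^Tb,t)-C|b|^2\frac{R}{h}.
\]
Since the right-hand side of the first inequality is independent of $h$, taking $h\to\infty$ and then $\lambda\to0$ in the upper estimate yields
\[
\limsup_{\lambda\to0}\limsup_{h\to\infty}\limsup_{r\to0}\Psi_\lambda^{nl}(Q,b,t,h,r,R)\le\Psi_0(Q^Tb,t).
\]
On the other hand, the correction $C|b|^2R/h$ in the lower estimate is infinitesimal as $h\to\infty$, and the resulting bound $\Psi_0(Q^Tb,t)$ does not depend on $\lambda$, whence
\[
\liminf_{\lambda\to0}\liminf_{h\to\infty}\liminf_{r\to0}\Psi_\lambda^{nl}(Q,b,t,h,r,R)\ge\Psi_0(Q^Tb,t).
\]

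Finally, since $\liminf\le\limsup$ at every stage, the two displayed chains force all the iterated $\liminf$'s and $\limsup$'s to coincide with $\Psi_0(Q^Tb,t)$; in particular the iterated limit in \eqref{upper+lower} exists and equals $\Psi_0(Q^Tb,t)$. I do not expect any genuine obstacle here: the only point deserving a word of care is the order of the limits — the lower bound carries an $h$-dependent error, so one must send $r\to0$ first, then $h\to\infty$, and only then $\lambda\to0$, exactly as in the statement; swapping the order would not be justified by the estimates at hand.
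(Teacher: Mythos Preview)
Your argument is correct and follows exactly the same route as the paper, which simply states that the corollary is an immediate consequence of Lemma \ref{lowerbound} and Lemma \ref{upperbound}. Your explicit handling of the iterated $\liminf$/$\limsup$ and the order of the limits spells out what the paper leaves implicit, but nothing more is needed.
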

\begin{proof}
The thesis is an immediate consequence of Lemma \ref{lowerbound} and Lemma \ref{upperbound}.
\end{proof}

Property \eqref{upper+lower} cannot be directly applied in the analysis of the $\Gamma$-limit in Section \ref{sec4}, in fact we need an uniform bound from below (which does not depend on the parameters) for the cell formula, in order to deal with the relaxation process. The uniform bound is proved in the following lemma.
\begin{lem}\label{lemma4}
For every $\lambda>0$, $M\ge1$, $K>0$, there exist $C>0$ and $\omega_{M,K}\colon(0,\infty)\to(0,\infty)$ with
\begin{equation*}
    \lim_{r\to0}\omega_{M,K}(r)=0,
\end{equation*}
such that
\begin{equation}
    \Psi_\lambda^{nl}(Q,b,t,h,r,R)\ge\Psi_0(Q^Tb,t)-\frac{CK^2}{M}-\omega_{M,K}\left(\frac{r}{R}\right),\label{unif1}
\end{equation}
for all $Q\in SO(3)$,  $b\in\bar{B}_K(0)\subset\mathbb{R}^3$, $t\in S^2$, $r, R$, $h>0$ with $MR\le h$. Furthermore, there exists $c_*>0$ (which does not depend on the parameters), such that, for all $\lambda$, $Q$, $b$, $t$, $h$, $r$, $R$ with $2r\le R\le h$,

\begin{equation}
\Psi_\lambda^{nl}(Q,b,t,h,r,R)\ge\left(1-\frac{R}{h}\right) c_*|b|^2.
    \label{unif2}
\end{equation}
\end{lem}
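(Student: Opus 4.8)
The plan is to deduce both \eqref{unif1} and \eqref{unif2} by closely mimicking the structure of the proof of Lemma \ref{lowerbound}, but carrying the parameters $M$ and $K$ explicitly through the estimates rather than passing to the limits $r\to 0$, $h\to\infty$, $\delta\to 0$, $s\to 1$. First I would fix $\lambda,M,K$ and take $b\in\bar B_K(0)$, $t\in S^2$, $Q\in SO(3)$, and $r,R,h$ with $MR\le h$. As in Lemma \ref{lowerbound} I would assume $t=e_3$ without loss of generality, pick a competitor $\beta$ for $\Psi_\lambda^{nl}(Q,b,t,h,r,R)$ that is almost optimal (up to an additive error $|b|^2 R/h$, say), set $\eta:=(Q^T\beta-I)/r$, and subdivide $T_h^r$ into the dyadic cylinders $C_i^k$, $(i,k)\in I(r)$, with $k_r$ as in \eqref{kerre} for a fixed $s\in(0,1)$ and a fixed $\delta\in(0,1/2)$. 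The key point is that the Claim inside the proof of Lemma \ref{lowerbound} is already formulated as a fixed-$j$-free, quantitative statement: on the annulus $C^k_{i(k,r)}$ of smallest average energy one has, by scaling to the fixed cylinder $T^\delta_{h/R}$, weak $L^2$ compactness of the scaled strains $\tilde\eta_j$, a lower bound for the quadratic term via Jensen, a Taylor-expansion/linearisation argument for $W(I+\lambda_j^k\tilde\eta_j)$, and finally Lemma \ref{cgolem} with $M=h/R$. The only new ingredient is that now I must keep track of how the error terms depend on $|b|\le K$: the Jensen estimate \eqref{aggiunta} produces a factor $|Q^Tb|^2=|b|^2\le K^2$, and the subtracted terms in Lemma \ref{cgolem} are of the form $(c/M)\Psi_0\le (c/M)c_1|b|^2\le (c c_1 K^2)/M$, which is exactly the shape of the $CK^2/M$ term in \eqref{unif1}. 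The residual, genuinely $r$-dependent error (coming from the Taylor remainder $\omega$ and from the discarded innermost region $R\delta^{\tilde k_r}$ down to $r$) is collected into a single function $\omega_{M,K}(r/R)\to 0$; here the dependence on $M,K$ enters only through fixed multiplicative constants, so this is harmless.

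Assembling the dyadic sum exactly as in the display following \eqref{claimbis} gives
\[
\Psi_\lambda^{nl}(Q,b,t,h,r,R)\ \ge\ s(1-\delta)\,\Psi_0(Q^Tb,t)\Big(1-\tfrac{c}{M}-\omega_M(\delta)\Big)-\omega_{M,K}(r/R),
\]
and now, \emph{in contrast to} Lemma \ref{lowerbound}, I do \textbf{not} send $\delta\to 0$, $s\to 1$: instead I fix, once and for all, a small $\delta=\delta(M)$ with $\omega_M(\delta)\le c/M$ and a value $s$ close to $1$, so that $s(1-\delta)(1-2c/M)\ge 1-c'/M$ for a universal $c'$. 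Using $\Psi_0(Q^Tb,t)\le c_1|b|^2\le c_1 K^2$ to absorb the $c'/M$ correction, I obtain \eqref{unif1} with $C=c'c_1$ (after renaming $\omega_{M,K}$). Note $\Psi_0(Q^Tb,t)=\Psi_0(Q^Tb,t)$ is the correct target since the linear cell problem on the fixed cylinder that appears in Step 3 is $\Psi(Q^Tb,t,h/R,\delta,1)$, again by frame indifference as recorded in \eqref{frameindif}.

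For the uniform lower bound \eqref{unif2} the argument is shorter and does not need the linearisation at all: one drops the $\lambda|\beta-Q|^2$ term (it is nonnegative), keeps only $\int_{T_h^r}W(\beta)\ge C_1\int_{T_h^r}\dist^2(\beta,SO(3))\,dx$ by \eqref{quadraticgrowth}, and then one would like to compare with the linear cell problem $\Psi(b,t,h,r,R)$ and invoke the lower bound $c_*|b|^2\le\Psi(b,t,h,r,R)$ from Lemma \ref{cgolem}. The cleanest route is: apply the rigidity estimate of Corollary \ref{rigiditytris} to $\beta$ on $T_h^r$ (its hypotheses $\curl\beta=0$, $\int_0^{2\pi}\beta e_\theta\rho\,d\theta=rb$ match the constraint in \eqref{nonlinearcell} with $\varepsilon$ replaced by $r$ and Burgers vector $rb$, up to the scaling $2r\le R\le h$) to get a rotation $\bar Q$ with $\|\beta-\bar Q\|_{L^2(T_h^r)}^2\le c\|\dist(\beta,SO(3))\|_{L^2(T_h^r)}^2$; then $\bar Q^T\beta$ is (after subtracting $I$ and dividing by $r$) an admissible competitor for the \emph{linear} cell problem with Burgers vector $b$ on $T_h^r$, and a direct Taylor/linearisation bound of $\int \dist^2(\beta,SO(3))$ from below by $\int |\mathrm{sym}(\bar Q^T\beta-I)|^2$ combined with $\Psi(b,t,h,r,R)\ge c_*|b|^2$ yields $\int_{T_h^r}W(\beta)\,dx\ge c_*|b|^2\,hr^2\log(R/r)\,(1-R/h)$, which after dividing by $hr^2\log(R/r)$ is \eqref{unif2}. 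The main obstacle, and the step requiring care, is precisely this last comparison: going from $\dist^2(\beta,SO(3))$ (a nonconvex, full-gradient quantity) down to the quadratic form $\mathbb C$ evaluated on the symmetric part of the linearised strain is not an equality but needs the rigidity estimate to first pin $\beta$ near a \emph{single} rotation, and one must check that the loss incurred is controlled by a universal constant independent of $\lambda,Q,b,t,h,r,R$ — exactly the reason Corollary \ref{rigiditytris} was proved with a constant independent of the inner radius.
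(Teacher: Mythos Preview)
Your plan for \eqref{unif1} has a genuine gap concerning uniformity. The Claim inside the proof of Lemma \ref{lowerbound} is \emph{not} a quantitative, fixed-$r$ statement: it is a $\liminf$ established by compactness and contradiction, and the resulting error $\sigma_r$ in \eqref{claim} depends on the particular $Q,b,t,h,R,\delta,\lambda$ that were fixed at the outset. When you write that the residual error ``is collected into a single function $\omega_{M,K}(r/R)\to 0$'' you are implicitly assuming this error is uniform over $b\in\bar B_K(0)$, $t\in S^2$, and $h\ge MR$, but nothing in the contradiction argument gives that. The paper handles precisely this point by a different mechanism: it first proves an equicontinuity lemma (Lemma \ref{equicontinuity}) showing that the family $\{\Psi_\lambda^{nl}(Q,\cdot,\cdot,\cdot,r,1)\}_{r\in(0,1)}$ is equicontinuous on compact subsets of $\R^3\times S^2\times(1,\infty)$. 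Then, after reducing by scaling to $R=1$ and by subdivision in height to $h\in[M,2M]$, it covers the compact parameter set $\bar B_K(0)\times S^2\times[M,2M]$ by finitely many small balls, applies the pointwise $\liminf$ of Lemma \ref{lowerbound} at each centre, and uses equicontinuity to transfer the estimate to every point of the cover. This finite-cover argument is what manufactures a single $\omega_{M,K}$; your direct route would require reproving the Claim with $b_j,t_j,h_j$ varying along the sequence (and with the scaled domain $T^\delta_{h_j/R}$ varying as well), which is a different and nontrivial argument you have not supplied.

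For \eqref{unif2} your idea to invoke Corollary \ref{rigiditytris} is right, but the detour through the linear cell problem $\Psi(b,t,h,r,R)$ via a Taylor lower bound on $\dist^2(\beta,SO(3))$ is both unnecessary and problematic: there is no smallness of $\bar Q^T\beta-I$ available, so a Taylor expansion gives no global lower bound by $\tfrac12\mathbb C\eta:\eta$. The paper's argument is more direct. After scaling to $R=1$ it first reduces, by subdivision in height, to $h=1$ --- this is essential, because the rigidity constant in Corollary \ref{rigiditytris} is $c=c(h)$ and applying rigidity on $T_h^r$ with unbounded aspect ratio, as you propose, would not yield a universal $c_*$. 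Then one simply chains $W(\beta)\ge C_1\dist^2(\beta,SO(3))$ with rigidity to get $\int_{T_1^r}W(\beta)\ge c\int_{T_1^r}|\beta-\tilde Q|^2$, and a single Jensen inequality on circles, using $\int_0^{2\pi}\tilde Q e_\theta\rho\,d\theta=0$ together with the circulation constraint $\int_0^{2\pi}\beta e_\theta\rho\,d\theta=rb$, gives $\int_{T_1^r}|\beta-\tilde Q|^2\ge c|b|^2 r^2\log(1/r)$ directly, without ever touching $\mathbb C$ or $\Psi$. The factor $(1-R/h)$ then comes from the subdivision step via $\lfloor h\rfloor/h\ge 1-1/h$.
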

\begin{oss}
	As a consequence of the above estimate we also deduce an uniform bound from below of the elastic energy which will be needed for the compactness argument. Namely if $\beta\in L^2(T^r_h;\R^{3\times3})$ is a test function for $\Psi^{nl}_\lambda(Q,b,t,h,r,R)$ then from \eqref{unif2} we get
\begin{equation}\label{consequence1}
\frac{1}{hr^2\log\frac{R}{r}}\int_{T^r_h}W(\beta)dx\ge\lim_{\lambda\to0}\Psi^{nl}_\lambda(Q,b,t,h,r,R)\ge\left(1-\frac Rh\right)c_*|b|^2.
    \end{equation}
\end{oss}

For the proof of Lemma \ref{lemma4} we follow the approach proposed in \cite[Lemma 5.7]{C.G.O.} which requires a preliminary result:
\begin{lem}\label{equicontinuity}
Let $\lambda>0$, $Q\in SO(3)$, $H:=\{(b,t,h)\in\mathbb{R}^3\times S^2\times(1,\infty)\}$, $K\subset H$ compact. 
The family of functions $\Psi_\lambda^{nl}(Q,\cdot,\cdot,\cdot,r,1)$, $r\in(0,1)$, is equicontinuous on $K$.
\end{lem}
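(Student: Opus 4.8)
The plan is to exploit the explicit structure of the cell problem: for each fixed $r \in (0,1)$ the quantity $\Psi_\lambda^{nl}(Q,b,t,h,r,1)$ is obtained by minimizing an energy over admissible strains on $T_h^r = Q_t((B'_1\setminus B'_r)\times(0,h))$ subject to an average constraint parametrized by $(b,t,h)$. Since $K$ is compact, it suffices to prove that for each pair of parameter points $(b_1,t_1,h_1)$ and $(b_2,t_2,h_2)$ in $K$ one can convert an admissible competitor for one into an admissible competitor for the other at a cost controlled by $|b_1-b_2| + |t_1-t_2| + |h_1-h_2|$, with the controlling constant uniform in $r$. The three parameters should be treated one at a time. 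First I would handle the $b$-dependence: given a near-optimal $\beta$ for $(b_1,t,h)$, set $\beta' := \beta + r(b_2-b_1)\otimes (Q_te_\theta)/(2\pi \rho)$ written in cylindrical coordinates (this is curl-free, being a gradient of a multivalued function, and adjusts the circulation by exactly $r(b_2-b_1)$); since $|(b_2-b_1)\otimes Q_te_\theta/(2\pi\rho)| \le c|b_1-b_2|/\rho$, the extra term has $L^2(T_h^r)$-norm squared bounded by $c|b_1-b_2|^2 h \log(1/r)$, and by \eqref{quadraticgrowth}, \eqref{jacob} a Taylor-type estimate gives $|W(\beta') - W(\beta)|$ integrable with the same order; dividing by $hr^2\log(1/r)$ the discrepancy in $\Psi_\lambda^{nl}$ is $O(|b_1-b_2|(|b_1|+|b_2|+1))$, uniform in $r$. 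The $\lambda$-term is handled identically.

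Second, for the $t$-dependence, since $Q_t$ can be chosen to depend continuously (indeed, away from a single bad direction, smoothly) on $t\in S^2$, the rotation $R_{t_1,t_2} := Q_{t_2}Q_{t_1}^T \in SO(3)$ satisfies $|R_{t_1,t_2} - I| \le c|t_1-t_2|$; pushing forward a competitor $\beta$ for axis $t_1$ to $x \mapsto \beta(R_{t_1,t_2}^T x)$ (composed with the appropriate pre/post rotation of matrix values so that the circulation constraint around the new axis is satisfied) changes $\dist(\beta,SO(3))$ pointwise by at most $c|t_1-t_2|(|\beta| + 1)$, and since the competitor can be taken with energy bounded (by Lemma \ref{upperbound}, or rather its finite-$r$ content, and by \eqref{cgolem}) $\|\beta\|_{L^2(T_h^r)}^2 \le C h\log(1/r)$, the energy discrepancy again divides down to $O(|t_1-t_2|)$ uniformly in $r$. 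Here one uses compactness of $K$ to stay away from the single direction where $t\mapsto Q_t$ is discontinuous, or covers $K$ by finitely many charts.

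Third, the $h$-dependence: if $h_1 < h_2$, a competitor on $T_{h_1}^r$ is extended to $T_{h_2}^r$ by translating copies of it along the axis (using $\lceil h_2/h_1\rceil$ translated copies glued with a cutoff, exactly as in the interpolation step of Lemma \ref{rigidity}), and conversely one restricts; the per-unit-length normalization $1/(hr^2\log(1/r))$ absorbs the length change, and the gluing error is controlled by $|h_1-h_2|$ times the energy density, again uniformly in $r$ because the density is bounded by Lemma \ref{cgolem}/\ref{estimatefromabove}. Combining the three estimates and using that on the compact set $K$ the quantities $|b|$, $h$ are bounded, one gets a modulus of continuity for $\Psi_\lambda^{nl}(Q,\cdot,\cdot,\cdot,r,1)$ independent of $r\in(0,1)$, which is the asserted equicontinuity.

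The main obstacle I anticipate is ensuring that all error estimates are genuinely \emph{uniform in $r$ as $r\to 0$}: the naive bounds carry factors of $\log(1/r)$ from the $L^2$-norm of the singular field, but these are exactly cancelled by the $\log(1/r)$ in the denominator of the cell formula, so one must be careful to pair each singular contribution with the normalization rather than estimating numerator and denominator separately. A secondary technical point is the matrix-valued bookkeeping in the $t$-variation: the circulation constraint $\int_0^{2\pi}\beta(\Phi_t(\rho,\theta,z))Q_te_\theta\,\rho\,d\theta = rb$ must be preserved exactly under the change of axis, which forces the correct conjugation $\beta \mapsto R_{t_1,t_2}\,\beta(R_{t_1,t_2}^T\cdot)\,R_{t_1,t_2}^T$ and a verification that $R_{t_1,t_2}Q_{t_1}e_\theta$ matches $Q_{t_2}e_\theta$ up to the controllable error; tracking this carefully is routine but is where sign/orientation mistakes would creep in.
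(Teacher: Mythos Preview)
Your three-step plan (vary $b$, then $t$, then $h$, each by modifying a near-optimal competitor at controlled cost) is exactly the structure of the paper's proof, and your identification of the key cancellation---the $\log(1/r)$ from the singular field against the $\log(1/r)$ in the normalization---is on point. Two implementation details differ. For the $t$-variation, the paper uses $\beta'(x):=\beta(S^Tx)S^T - Q(S^T-I)$ with $S=Q_{t'}Q_t^T$: since $S^TQ_{t'}=Q_t$ this preserves the circulation constraint \emph{exactly} (with the same $b$), and the correction term makes $\beta'-Q=(\beta-Q)S^T$ so the $\lambda$-term is unchanged pointwise; your conjugation $R\beta(R^T\cdot)R^T$ instead sends the circulation to $Rb$, which you would then have to fix by invoking the $b$-step with $|Rb-b|\le c|t'-t||b|$. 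For the $h$-variation (say $h'>h$), rather than gluing $\lceil h'/h\rceil$ translated copies with cutoffs, the paper writes $\beta=r\eta_{b,t}+\nabla u$, picks a slice $y_3\in(0,h)$ with below-average $\int_{B'_1\setminus B'_r}|\nabla u(\cdot,y_3)-Q|^2$, and extends $u$ to $(0,h')$ by holding it constant in $z$ on the inserted interval of length $h'-h$; this avoids all interface errors and gives the bound $c(1+\lambda)|b|^2(h'-h)/h$ directly.
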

\begin{proof} Let $Q\in SO(3)$, $\lambda>0$ be fixed.
It suffices to prove equicontinuity of $\Psi_\lambda^{nl}$ with respect to $r$ separately in each variable $b$, $t$ and $h$.
\\
\noindent\textit{Step 1. Continuity in $b$.} We show that there exists a constant $c>0$ (not depending on $r$) such that for all $b,b'\in\mathbb{R}^3$, it holds
\begin{equation*}
\Psi_\lambda^{nl}(Q,b',t,h,r,1)-\Psi_\lambda^{nl}(Q,b,t,h,r,1)\le c|b'-b|(1+c|b'-b|)
\end{equation*}
for all $t\in S^2,h>0, r\in(0,1)$.\\

Let $b,b'\in\mathbb{R}^3$, consider $\beta$ test function for $\Psi_\lambda^{nl}(Q,b,t,h,r,1)$ and define 
\[\beta':=\beta+r\eta_{b'-b,t},\]
with $\eta_{b'-b,t}$ the solution of \eqref{eul1} for $\mu_{b'-b,t}$. Then for every $\delta>0$ we have 
\begin{align*}
    \int_{T_h^r}|\beta'-Q|^2dx&\le (1+\delta)\int_{T_h^r}|\beta-Q|^2dx+ \left(1+\frac{1}{\delta}\right)\int_{T_h^r}|\beta'-\beta|^2dx\\
 &=(1+\delta)\int_{T_h^r}|\beta-Q|^2dx+
    \left(1+\frac{1}{\delta}\right)\int_{T_h^r}|r\eta_{b'-b,t}|^2dx.
\end{align*}
Moreover by \eqref{jacob} we get
\begin{align*}
    \int_{T_h^r}W(\beta')dx-\int_{T_h^r}W(\beta)dx&=\int_{T_h^r}\int_0^1 \frac{\partial W}{\partial F} (\beta+s(r\eta_{b'-b,t})):(r\eta_{b'-b,t})\ dsdx\\
& \le c\int_{T_h^r}\int_0^1\dist\bigl(\beta+s(r\eta_{b'-b,t}),SO(3)\bigr)\left|r\eta_{b'-b,t}\right|dsdx\\
& \le c\int_{T_h^r}\int_0^1\bigl[\dist(\beta,SO(3))+s\left|r\eta_{b'-b,t}\right|\bigr]\left|r\eta_{b'-b,t}\right|dsdx\\
& \le c\int_{T_h^r}\dist(\beta,SO(3))\left|r\eta_{b'-b,t}\right|dx + c\int_{T_h^r}\frac{1}{2}\left|r\eta_{b'-b,t}\right|^2dx.
\end{align*}
Again for every $\delta>0$ it holds
\begin{equation*}
    \int_{T_h^r}\dist(\beta,SO(3))\left|r\eta_{b'-b,t}\right|dx\le \frac{\delta}{2}\int_{T_h^r}\dist^2(\beta,SO(3))\ dx+\frac{1}{2\delta}\int_{T_h^r}\left|r\eta_{b'-b,t}\right|^2dx.
\end{equation*}
Then setting $\delta=|b'-b|$, after rescaling and using the arbitrariness of the test function $\beta$ we finally get
\begin{equation*}
    \Psi_\lambda^{nl}(Q,b',t,h,r,1)-\Psi_\lambda^{nl}(Q,b,t,h,r,1)\le c|b'-b|(\Psi_\lambda^{nl}(Q,b,t,h,r,1)+1)+c|b'-b|^2.
\end{equation*}
We conclude the proof of Step 1
observing that by the growth condition on $W$
\begin{align}
    \Psi_\lambda^{nl}(Q,b,t,h,r,1)&\le\frac{1}{hr^2\log\frac{1}{r}}\int_{T_h^r}W(Q+r\eta_{b,t})+\lambda|r\eta_{b,t}|^2dx\nonumber\\
   & \le \frac{c+\lambda}{hr^2\log\frac{1}{r}}\int_{T_h^r}|r\eta_{b,t}|^2dx\le c(1+\lambda)|b|^2.\label{stimadasu}
\end{align}
\noindent\textit{Step 2. Continuity in $t$.} We now show that there is a constant $c>0$ such that for all $t$, $t'\in S^2$,
\begin{equation*}
\Psi_\lambda^{nl}(Q,b,t',h,r,1)-\Psi_\lambda^{nl}(Q,b,t,h,r,1)\le c(|t'-t|+|t'-t|^2)
\end{equation*}
for all $b\in\R^3,h>0,r\in(0,1)$.

We choose $t$, $t'\in S^2$ and fix as above $\beta$ to be an admissible strain in the definition of $\Psi_\lambda^{nl}(Q,b,t,h,r,1)$. The function 
\begin{equation*}
\beta'(x):=\beta(S^Tx)S^T-Q(S^T-I),\quad S:=Q_{t'}Q_t^T\in SO(3),
\end{equation*}
is admissible for $\Psi_\lambda^{nl}(Q,b,t',h,r,1)$ and 
\begin{equation}\label{conttt}
|S^T-I|\le c|t'-t|\quad\forall t',t\in S^2.
\end{equation}
If we let $\tilde\beta(x):=\beta'(Sx)$ then a change of variable gives
\begin{equation*}
\int_{ST}W(\beta')+\lambda|\beta'-Q|^2dx=\int_TW(\tilde\beta)+\lambda|\tilde\beta-Q|^2dx,
\end{equation*}
with $T:=Q_t((B'_1\setminus B'_r)\times(0,h))$. For every $\delta>0$, adding and subtracting $\beta$ to $\tilde\beta-Q$ yields the inequality
\begin{equation*}
	\int_T|\tilde\beta-Q|^2\le\left(1+\frac{1}{\delta}\right)c|t'-t|^2
	\int_T|\beta-Q|^2dx+(1+\delta)\int_T|\beta-Q|^2dx,
\end{equation*}
so that choosing $\delta=|t'-t|$ we find
\begin{equation}\label{ldue}
\lambda\int_T(|\tilde\beta-Q|^2-|\beta-Q|^2)dx\le \lambda c(|t'-t|+|t'-t|^2)\int_T|\beta-Q|^2dx.
\end{equation}
Moreover
\begin{align*}
\int_TW(\tilde\beta)dx&-\int_TW(\beta)dx=\\
&=\int_T\int_0^1\frac{\partial W}{\partial F}(\beta+s(\beta-Q) (S^T-I)):((\beta-Q)(S^T-I))dx\\
&\le c|t'-t|\int_T\int_0^1\dist(\beta+s(\beta-Q) (S^T-I),SO(3))\cdot|\beta-Q|dsdx\\
&\le c|t'-t|\int_T\int_0^1\bigl[\dist(\beta,SO(3))+sc|t'-t||\beta-Q|\bigr]\cdot|\beta-Q|dsdx\\
&\le c|t'-t|\int_T\dist(\beta,SO(3))\cdot|\beta-Q|dx+c|t'-t|^2\int_T|\beta-Q|^2dx.
\end{align*}
Now using Young's inequality we find
\begin{equation*}
\int_T\dist(\beta,SO(3))\cdot|\beta-Q|dx\le \frac12\int_T\dist^2(\beta,SO(3))dx+\frac12\int_T|\beta-Q|^2dx.
\end{equation*}
Therefore by the growth estimate on $W$ we get
\begin{equation*}
\int_TW(\tilde\beta)-W(\beta)dx\le c(|t'-t|-|t'-t|^2)\int_TW(\beta)+|\beta-Q|^2dx.
\end{equation*}
Combining this with \eqref{ldue}, we have
\begin{equation*}
\Psi_\lambda^{nl}(Q,b,t',h,r,1)-\Psi_\lambda^{nl}(Q,b,t,h,r,1)\le
c(|t'-t|+|t'-t|^2)\Psi^{nl}_{\lambda+1}(Q,b,t,h,r,1)
\end{equation*}
which again gives the conclusion using \eqref{stimadasu}.\medspace

\noindent\textit{Step 3. Continuity in $h$.} 
We show that there exists a constant $c>0$ such that for $h, h'>1$,
 \begin{equation*}
|\Psi_\lambda^{nl}(Q,b,t,h',r,1)-\Psi_\lambda^{nl}(Q,b,t,h,r,1)|\le c|h'-h|
 \end{equation*}
 for all $b\in \R^3,t\in S^2$ with $(b,t,h)\in K$, and for all $r\in(0,1)$.
 
Assume that $h'>h$ and notice that any test function for $\Psi_\lambda^{nl}(Q,b,t,h',r,1)$ can be restricted to get a test function for $\Psi_\lambda^{nl}(Q,b,t,h,r,1)$ and in particular
\begin{equation}\label{importante}
    \Psi_\lambda^{nl}(Q,b,t,h,r,1)\le\frac{h'}{h}\Psi_\lambda^{nl}(Q,b,t,h',r,1),
\end{equation}
or equivalently using \eqref{stimadasu}
\begin{align*}
   \Psi_\lambda^{nl}(Q,b,t,h,r,1)-\Psi_\lambda^{nl}(Q,b,t,h',r,1)&\le \frac{h'-h}{h}\Psi_\lambda^{nl}(Q,b,t,h',r,1)\\&\le c(1+\lambda)|b|^2\frac{h'-h}{h}.
\end{align*}
It remains to show the other inequality. As usual we assume $t=e_3$. 
Let $\beta$ be admissible for $\Psi_\lambda^{nl}(Q,b,t,h,r,1)$ and let $u\in W^{1,2}(T_h^r;\mathbb{R}^3)$ be such that $\nabla u=\beta-r\eta_{b,t}$. Then we estimate
\begin{align*}
    \int_{T_h^r}|\nabla u-Q|^2dx&
    \le 2\int_{T_h^r}|\beta-Q|^2dx+2\int_{T_h^r}|r\eta_{b,t}|^2dx\\
   & \le 2\int_{T_h^r}|\beta-Q|^2dx+c|b|^2hr^2\log\frac{1}{r}.
\end{align*}
We denote by $w\in W^{1,2}(T_h^r,\mathbb{R}^3)$  the function such that $\nabla w=\nabla u-Q$ and 
we choose $y_3\in(0,h)$ such that
\begin{align}\label{stimadiw}
    \int_{B'_1\setminus B'_r}\left|\nabla w(x',y_3)\right|^2dx'&\le\frac{1}{h}\int_{T_h^r}\left|\nabla w(x',x_3)\right|^2dx \notag\\
 & \le \frac2h\int_{T_h^r}|\beta-Q|^2dx+ c|b|^2r^2\log\frac{1}{r}.
\end{align}
Next we define the extension $\tilde{w}\in W^{1,2}(T_{h'}^r;\mathbb{R}^3)$ with $T_{h'}^r:=(B'_1\setminus B'_r)\times(0,h')$
\begin{equation*}
    \tilde{w}(x',x_3):=\begin{cases}w(x',x_3)&\text{if $x_3\in(0,y_3)$}\\
    w(x',y_3)&\text{if $x_3\in(y_3,y_3+h'-h)$}\\
    w(x',x_3-(h'-h))&\text{if $x_3\in(y_3+h'-h,h')$}.
    \end{cases}
\end{equation*}
The function $\tilde{\beta}:=r\eta_{b,t}+\nabla\tilde{w}+Q$ is admissible for $\Psi_\lambda^{nl}(Q,b,t,h',r,1)$ and satisfies
\begin{align*}
    \int_{T_{h'}^r}|\tilde{\beta}-Q|^2dx&=\int_{T_{h}^r}|\beta-Q|^2dx +(h'-h)\int_{B'_1\setminus B'_r}|r\eta_{b,t}+\nabla \tilde w(x',y_3)|^2dx'\\
   & \le \int_{T_{h}^r}|\beta-Q|^2dx +(h'-h)\left( \frac{2}{h}\int_{T_h^r}|\beta-Q|^2dx+ c|b|^2r^2\log\frac{1}{r}\right),
\end{align*}
where we have used \eqref{stimadiw}.
Furthermore using the growth assumption on $W$ we find
\begin{align*}
    \int_{T_{h'}^r}W(\tilde{\beta})dx &=\int_{T_h^r}W(\beta)dx+(h'-h)\int_{B'_1\setminus B'_r}W(r\eta_{b,t}+\nabla \tilde w(x',y_3)+Q)dx'\\
& \le \int_{T_{h}^r}W(\beta)dx+ c(h'-h)\int_{B'_1\setminus B'_r}|r\eta_{b,t}+\nabla \tilde w(x',y_3)|^2dx'\\
&\le\int_{T_{h}^r}W(\beta)dx+ 
    c(h'-h)\left( \frac{c}{h}\int_{T_h^r}|\beta-Q|^2dx+ c|b|^2r^2\log\frac{1}{r}\right).
\end{align*}
Then we conclude
\begin{align*}
    \Psi_\lambda^{nl}(Q,b,t,h',r,1)-\Psi_\lambda^{nl}(Q,b,t,h,r,1)&\le c\frac{h'-h}{h}\left(|b|^2+\Psi_{\lambda+1}^{nl}(Q,b,t,h,r,1)\right)\\&\le c(1+\lambda)|b|^2\frac{h'-h}{h}.
\end{align*}

\end{proof}

\begin{proof}[Proof of Lemma \ref{lemma4}] By property \eqref{scalingg} we can reduce to $R=1$. In addition we can assume $h\in[M,2M]$. In fact for any $N\in\mathbb{N}$, we can subdivide $T_h^r$ into $N$ cylinders $T_i$, $i=1,...,N$, of length $h/N$, and for any $\beta$ test function for $\Psi_\lambda^{nl}(Q,b,t,h,r,1)$ we have
\begin{equation*}
 \int_{T_h^r}W(\beta)+\lambda|\beta-Q|^2dx=\sum_{i=1}^N\int_{T_i}W(\beta)+\lambda|\beta-Q|^2dx\ge N\min_i\int_{T_i}W(\beta)+\lambda|\beta-Q|^2dx
\end{equation*}
and therefore
\begin{equation}
    \Psi_\lambda^{nl}(Q,b,t,h,r,1)\ge \Psi_\lambda^{nl}(Q,b,t,\frac{h}{N},r,1).
    \label{unif3}
\end{equation}
Consider the compact set $H_{M,K}:=\bar{B}_K(0)\times S^2\times[M,2M]$ and recall that  by the previous  lemma $\Psi_\lambda^{nl}(Q,\cdot,\cdot,\cdot,r,1)$ is equicontinuous on $H_{M,K}$. Therefore  for any $j\in\mathbb{N}$ there exists $\delta_j>0$ such that
\begin{equation}\label{equicont}
\left|\Psi_\lambda^{nl}(Q,b,t,h,r,1)-\Psi_\lambda^{nl}(Q,b',t',h',r,1)\right|\le1/j,
\end{equation}
 if $|b'-b|+|t'-t|+|h'-h|\le\delta_j$, $(b,t,h)$, $(b',t',h')\in H_{M,K}$, $r\in(0,1)$. 
 We cover $H_{M,K}$ with finitely many balls of radius $\delta_j$. By Lemma \ref{lowerbound}, at every center $(b_l,t_l,h_l)$ of these balls, we have 
\begin{equation*}
    \liminf_{r\to0}\Psi_\lambda^{nl}(Q,b_l,t_l,h_l,r,1)\ge\Psi_0(Q^Tb_l,t_l)-\frac{C}{M}|b_l|^2.
\end{equation*}
Therefore for every $l$ there exists $r_j(l)\in(0,r_{j-1}(l))$ such that, for all $r<r_j(l)$ it holds
\begin{equation}\label{compact}
    \Psi_\lambda^{nl}(Q,b_l,t_l,h_l,r,1)\ge\Psi_0(Q^Tb_l,t_l)-\frac{C}{M}|b_l|^2-\frac{1}{j}.
\end{equation}
Define $r_j:=\min_lr_j(l)$ and for any $(b,t,h)\in H_{M,K}$ let $(b_l,t_l,h_l)$ be the center of the ball of radius $\delta_j$ that contains $(b,t,h)$. Then for every $r<r_j$ we get
\begin{align*}
    \Psi_\lambda^{nl}(Q,b,t,h,r,1)-\Psi_0(Q^Tb,t))&=(\Psi_\lambda^{nl}(Q,b,t,h,r,1)-\Psi_\lambda^{nl}(Q,b_l,t_l,h_l,r,1))\\
    &\,+(\Psi_\lambda^{nl}(Q,b_l,t_l,h_l,r,1)-\Psi_0(Q^Tb,t)\\
   & \ge  -\frac{3}{j}-\frac{c}{M}K^2,
\end{align*}
where we have used \eqref{compact}, \eqref{equicont}, and the continuity of $\Psi_0$.
Then, set $\omega_{M,K}(r):=3/j$ for $r\in(r_{j+1},r_j]$ and (\ref{unif1}) is proven.\\

It remains to  prove (\ref{unif2}). We assume $t=e_3$.
Using first  \eqref{importante} and then \eqref{unif3} for $N=\lfloor h\rfloor$ we get
\begin{equation}\label{importantebis}
    \Psi_\lambda^{nl}(Q,b,t,h,r,1)\ge\frac{\lfloor h\rfloor}{h}\Psi_\lambda^{nl}(Q,b,t,\lfloor h\rfloor,r,1)\ge\frac{\lfloor h\rfloor}{h}\Psi_\lambda^{nl}(Q,b,t,1,r,1). 
\end{equation}
Let $\beta$ be admissible for $\Psi_\lambda^{nl}(Q,b,t, 1,r,1)$, and denote $T_1^r:=(B'_1\setminus B'_r)\times(0,1)$. Then, by the growth condition on $W$ and the rigidity estimate on cylinders, Corollary \ref{rigiditytris}, there exist $c>0$ (independent of the parameters) and $\tilde{Q}\in SO(3)$ such that 
$$
 \int_{T_{1}^r}W(\beta)dx\ge c\int_{T_{1}^r}\left|\beta-\tilde{Q}\right|^2dx,
$$
and by Jensen inequality and using the condition on the curl of $\beta$ we obtain
 \begin{align*}
    \int_{T_{1}^r}W(\beta)dx\ge & c\  \int_0^{1}\int_r^1\frac{1}{2\pi\rho}\left|\int_0^{2\pi}(\beta -\tilde{Q})\cdot e_\theta\rho\ d\theta\right|^2d\rho dz\\
 =&c\int_0^{1}\int_r^1\frac{1}{2\pi\rho}\left|\int_0^{2\pi}\beta\cdot e_\theta\rho\ d\theta\right|^2d\rho dz\\
 =&c\int_0^{1}\int_r^1\frac{r^2}{2\pi\rho}|b|^2d\rho dz=c|b|^2r^2\log\frac{1}{r}.
\end{align*}
Analogously we obtain
\begin{equation*}
 \int_{T_{1}^r}\lambda|\beta-Q|^2dx\ge\lambda c|b|^2r^2\log\frac{1}{r}.
\end{equation*}
 Thus from \eqref{importantebis} we conclude
\begin{equation*}
\Psi_\lambda^{nl}(Q,b,t,h,r,1)\ge\frac{\lfloor h\rfloor}{h}(1+\lambda)c|b|^2\ge \Big(1-\frac{1}{h}\Big)c|b|^2.
\end{equation*}

\end{proof}

\section{An uniform estimate of the quadratic energy}\label{sec3}

This section is devoted to the analysis of the asymptotic behaviour of the quadratic energy associated to a sequence of dilute dislocations. Indeed, as already pointed out, our approach exploits the fact that the  energy asymptotically linearises near a suitable rotation. 
We will provide in Proposition \ref{stimaL2}
an uniform $L^2$ estimate of the strains corresponding to a sequence of dilute dislocations $\mu_\varepsilon\in{\mathcal{M}}_\mathcal{B}^{h_\varepsilon,\alpha_\varepsilon}(\bar\Om)$. This result will be crucial for the proof of compactness in Section \ref{sec4}.\\

It is indeed known (see \cite{BrB}) that for any measure $\mu\in \mathcal M(\R^3)$ there exists a unique  $\eta\in L^{3/2}(\R^3;\R^{3\times3})$ which is a distributional solution of
\begin{align}\label{divsystem}
\begin{cases}
\Div \eta=0&\text{ in }\R^3\\
\curl \eta=\mu&\text{ in }\R^3.
\end{cases}
\end{align}
In general we cannot expect a better summability for $\eta$, in particular it might not belong to $L^p$ if $p>3/2$ (see \cite[Lemma 4.1]{C.G.O.}). Nevertheless if we assume that the measure is polyhedral it is possible to show, by an explicit computation, that $\eta$ is in $L^p$ for any $p<2$. 
In particular one can show that if $\mu\in \mathcal M_{\mathcal B}(\R^3)$ is a dislocation measure of the form
\begin{align}\label{polymeas}
    \mu:=\sum_ib_i\otimes t_i\mathcal H^1\res \gamma_i,
\end{align}
where $\gamma_i$ are straight segments, then
there is a constant $C>0$, independent of $\mu$, such that the following estimates hold
\begin{align}
    |\eta(x)|\leq \frac{C|\mu|(\R^3)}{\dist(x,\supp\mu)^2},\label{est_1}\\
    |\eta(x)|\leq C\sum_i\frac{|b_i|}{\dist(x,\gamma_i)}.\label{est_2}
\end{align}
This is for instance proved in \cite[Lemma 4.1]{C.G.O.} in a more general context.

Note that in these estimates $\mu$ needs to be defined in the whole of $\R^3$ and divergence free. Therefore in order to use them we will extend any measure in $\mathcal{M}^{h,\alpha}_\mathcal{B}(\bar\Om)$ to a measure in $\mathcal{M}_\mathcal{B}(\R^3)$. In what follows we will first prove that similar estimates hold true for a larger class of measures, that we will call deformed polyhedral, namely measures of the form \eqref{polymeas} where $\gamma_i$ are segments deformed under suitably regular maps. Second, we will extend a dilute measure $\mu$ in $\bar\Omega$ to a deformed polyhedral one in the whole of $\R^3$.\\
Precisely, a dislocation measure $\mu\in \mathcal M_{\mathcal B}(\R^3)$ is said to be a {\it deformed polyhedral measure }if it is of the form \eqref{polymeas}
where  now $\gamma_i:=\Phi_i(L_i)$ for some $L_i$ straight segments and $\Phi_i:\R^3\rightarrow\R^3$ uniformly bi-Lipschitz functions, i.e., there is a positive constant $\ell>1$ such that, for all $i$ 
\begin{align}\label{BL}
    \frac{1}{\ell}|x-y|\leq |\Phi_i(x)-\Phi_i(y)|\leq \ell|x-y|,\;\;\;\;\;\text{for all }x,y\in \R^3.
\end{align}

 \begin{lem}\label{lemma_deform}
Let $\mu\in \mathcal M_{\mathcal B}(\R^3)$ be a deformed polyhedral dislocation measure. Let $\eta\in L^{\frac32}(\R^3;\R^{3\times3})$ be the solution to \eqref{divsystem}. Then there is a constant $C>0$ such that, for all $x\notin \supp\mu$, \eqref{est_1} and \eqref{est_2} hold.
\end{lem}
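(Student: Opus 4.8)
The plan is to reduce both pointwise bounds to the case of a \emph{single} deformed segment and then to exploit the explicit Biot--Savart type representation of the solution. Recall (see \cite{BrB} and the proof of \cite[Lemma 4.1]{C.G.O.}) that the unique $L^{3/2}$ solution of \eqref{divsystem} has the form $\eta=K*\mu$, where $K$ is the matrix--valued kernel obtained by taking the curl of the Newtonian potential; in particular $|K(z)|\le C_0|z|^{-2}$ for a dimensional constant $C_0$, and for every $x\notin\supp\mu$ the integral defining $K*\mu$ converges absolutely, so that
\begin{equation*}
|\eta(x)|\le C_0\int_{\R^3}\frac{d|\mu|(y)}{|x-y|^2}\le C_0\sum_i|b_i|\int_{\gamma_i}\frac{d\mathcal H^1(y)}{|x-y|^2},
\end{equation*}
where we used $|\mu|\le\sum_i|b_i|\,\mathcal H^1\res\gamma_i$, and hence $|\mu|(\R^3)\le\sum_i|b_i|\,\mathcal H^1(\gamma_i)$. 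It therefore suffices to bound each integral $I_i(x):=\int_{\gamma_i}|x-y|^{-2}\,d\mathcal H^1(y)$ both by $C/\dist(x,\gamma_i)$ and by $C\,\mathcal H^1(\gamma_i)/\dist(x,\gamma_i)^2$.

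To this end I would pull $\gamma_i$ back to the straight segment $L_i$ through $\Phi_i$. Since $\Phi_i$ is bi-Lipschitz with constant $\ell$ as in \eqref{BL}, the restriction $\mathcal H^1\res\gamma_i$ is comparable to the push-forward $(\Phi_i)_\#(\mathcal H^1\res L_i)$ with density in $[\ell^{-1},\ell]$; writing $z_i:=\Phi_i^{-1}(x)$ and using $|x-\Phi_i(s)|=|\Phi_i(z_i)-\Phi_i(s)|\ge\ell^{-1}|z_i-s|$ one gets
\begin{equation*}
I_i(x)\le\ell\int_{L_i}\frac{ds}{|x-\Phi_i(s)|^2}\le\ell^3\int_{L_i}\frac{ds}{|z_i-s|^2}.
\end{equation*}
The same bi-Lipschitz bound gives $\dist(z_i,L_i)\ge\ell^{-1}\dist(x,\gamma_i)$ (hence also $\ge\ell^{-1}\dist(x,\supp\mu)$) and $\mathcal H^1(L_i)\le\ell\,\mathcal H^1(\gamma_i)$.

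The remaining integral over the \emph{straight} segment $L_i$ satisfies the two elementary estimates
\begin{equation*}
\int_{L_i}\frac{ds}{|z_i-s|^2}\le\frac{C}{\dist(z_i,L_i)}\qquad\text{and}\qquad\int_{L_i}\frac{ds}{|z_i-s|^2}\le\frac{\mathcal H^1(L_i)}{\dist(z_i,L_i)^2},
\end{equation*}
with $C$ universal: parametrising $L_i$ by arclength, $|z_i-s|^2=(\tau-a)^2+b^2$ with $b$ the distance of $z_i$ from the line through $L_i$, the first bound follows by comparison with $\int_{\R}(\tau^2+b^2)^{-1}d\tau=\pi/b$ (with a minor extra estimate when the foot of the perpendicular falls outside $L_i$), while the second is immediate from $|z_i-s|\ge\dist(z_i,L_i)$. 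Inserting these into the chain above, and summing over $i$ using $\dist(x,\gamma_i)\ge\dist(x,\supp\mu)$ and $\sum_i|b_i|\,\mathcal H^1(\gamma_i)\ge|\mu|(\R^3)$, the first bound yields \eqref{est_2} and the second yields \eqref{est_1}, with a constant $C=C(\ell)$.

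The only genuinely delicate step is the first one: identifying the distributional $L^{3/2}$ solution of \eqref{divsystem} with $K*\mu$ and recording both the pointwise bound $|K(z)|\le C_0|z|^{-2}$ and the absolute convergence of $K*\mu$ away from $\supp\mu$. This is, in substance, already established in \cite[Lemma 4.1]{C.G.O.} for straight polyhedral measures, and that argument uses straightness only through the final one--dimensional integral, which is precisely the part we re-do above after the change of variables. A secondary point to keep in mind is that the constant unavoidably degenerates as $\ell\to\infty$; this is harmless, since $\ell$ is part of the data defining a deformed polyhedral measure.
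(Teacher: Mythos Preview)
Your proof is correct and follows essentially the same route as the paper: represent $\eta$ via the Biot--Savart kernel, bound pointwise by $\int|x-y|^{-2}\,d|\mu|$, and for \eqref{est_2} use the bi-Lipschitz change of variables $\Phi_i^{-1}$ to reduce each curve integral to the straight-segment case treated in \cite{C.G.O.}. One small slip: in deriving \eqref{est_1} you invoke $\sum_i|b_i|\,\mathcal H^1(\gamma_i)\ge|\mu|(\R^3)$, which points the wrong way for what you need; but \eqref{est_1} already follows directly from your first displayed inequality via $|x-y|\ge\dist(x,\supp\mu)$ for $y\in\supp\mu$, which is exactly what the paper means by ``easily achieved''.
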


\begin{proof} The proof follows the argument in Lemma 4.1 in \cite{C.G.O.}, by means of a change of variable and exploiting the uniform bi-lipschitz condition in \eqref{BL}. For the reader convenience we sketch the main points.\\
From \eqref{divsystem}, since $\curl\curl \eta= \nabla\Div \eta-\Delta \eta$,  we obtain that $\eta=(-\Delta)^{-1}(\curl \mu)$, and therefore for all $x\notin \supp \mu$
\begin{align}
	\eta_{ij}(x)=\int_{\R^3}K_{jk}(x-y)d\mu_{ik}(y),
\end{align}
where $K_{lm}(x)=- \varepsilon_{lmk}\frac{x_k}{4\pi|x|^3}$. 
Then we have
\begin{align}
	|\eta(x)|\leq C\sum_i|b_i|\int_{\gamma_i}\frac{1}{|x-y|^2}d\mathcal H^1(y).\label{6.10}
\end{align}	
	
Estimate \eqref{est_1} is  easily achieved. To obtain \eqref{est_2} we use the property  of $\Phi_i$, \eqref{BL}, and with a change of variables we write
\begin{equation}
    \int_{\gamma_i}\frac{1}{|x-y|^2}d\mathcal H^1(y)\leq \ell\int_{L_i}\frac{1}{|x'-y'|^2}d\mathcal H^1(y')\leq C\frac{\ell}{\dist(x',L_i)},
\end{equation}
where $x'=\Phi_i^{-1}(x)$, $y'=\Phi_i^{-1}(y)$, and the last inequality can be obtained by a direct estimate (see \cite{C.G.O.}).
Again by assumption \eqref{BL} we have $|x'-y'|\geq\frac{1}{\ell}|x-y|$ and hence 
\begin{align}
    \dist(x',L_i)\geq \ell\dist(x,\gamma_i),
\end{align}
which in turn implies
$$    \int_{\gamma_i}\frac{1}{|x-y|^2}d\mathcal H^1(y)\leq C\frac{1}{\dist(x,\gamma_i)},$$
for some constant $C>0$ independent of $i$. Therefore, together with \eqref{6.10}, we readily conclude.
\end{proof}

We then state and prove an extension lemma for a polyhedral measure $\mu\in \mathcal M_{\mathcal B}(\Om)$, to a deformed polyhedral measure $\tilde \mu\in \mathcal M_{\mathcal B}(\R^3)$,    
 which is a refinement of Lemma 2.3 proved in \cite{C.G.M.}. We recall that we are working with $\Om$ a $C^2$ domain.

\begin{lem}\label{extension_2}
Let $\mu\in \mathcal M_{\mathcal B}(\Om)$ be a polyhedral dislocation measure, $\mu:=\sum_{i\in I}b_i\otimes t_i\mathcal H^1\res \gamma_i$, with $\gamma_i$ straight segments. Then there is a measure $\tilde \mu\in \mathcal M_{\mathcal B}(\R^3)$ that satisfies the following properties:
\begin{enumerate}[(i)]
    \item $\tilde \mu$ extends $\mu$, that is $\tilde \mu\res\Om=\mu$;
    \item $\tilde \mu$ is deformed polyhedral, $$    \tilde\mu:=\sum_{i\in \tilde I}\tilde b_i\otimes \tilde t_i\mathcal H^1\res \tilde\gamma_i;$$

    \item there is a constant $C>0$ depending only on the domain $\Om$ such that the following estimates hold
    \begin{align}\label{ext1}
    &|\tilde \mu|(\R^3)\leq C|\mu|(\Om),\\
    \label{ext2}
    &\sum_{i\in\tilde I}|\tilde b_i|^2\mathcal H^1(\tilde\gamma_i)\leq C\sum_{i\in  I}| b_i|^2\mathcal H^1(\gamma_i),\\
    &\sharp(\tilde I)\le \sharp(I)+C|\mu|(\Om).\label{ext3}
    \end{align}
\end{enumerate}
\end{lem}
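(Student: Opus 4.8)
The plan is to obtain $\tilde\mu$ by reflecting $\mu$ across $\partial\Om$ inside a boundary collar and then curing, outside $\bar\Om$, the finitely many free endpoints that this produces, so that $\tilde\mu$ becomes divergence free on all of $\R^3$; this refines the construction of \cite[Lemma 2.3]{C.G.M.}, the new points being the deformed polyhedral structure and the dependence of the constants only on $\partial\Om$. First I would use that $\partial\Om$ is compact and $C^2$: there is $\delta_0=\delta_0(\Om)>0$ such that on $U:=\{x:\dist(x,\partial\Om)<\delta_0\}$ the nearest point projection $\pi$ onto $\partial\Om$ is $C^1$ with bounded derivative, and for $\delta_0$ small enough the map $\mathcal R(x):=2\pi(x)-x$ is a bi-Lipschitz involution of $U$ fixing $\partial\Om$ pointwise and exchanging $U\cap\Om$ with $U\setminus\bar\Om$, with bi-Lipschitz constant $\le\ell=\ell(\Om)$. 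An elementary but useful fact I would isolate is that the $\mathcal R$-image of any straight segment contained in $U$ is a uniformly bi-Lipschitz arc, hence of the form $\Phi(L)$ with $L$ a straight segment and $\Phi\colon\R^3\to\R^3$ globally bi-Lipschitz with constant controlled by $\ell$; thus such arcs are admissible pieces of a deformed polyhedral measure in the sense of \eqref{BL}.

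Next, only finitely many $\gamma_i$ meet $\partial\Om$, each only at endpoints of the segment (as $\gamma_i\subset\Om$ is connected and $\Om$ open). For such a $\gamma_i$ and a point $p\in\bar\gamma_i\cap\partial\Om$ I take $\gamma_i^p\subset\gamma_i$ the maximal subsegment issuing from $p$ contained in $\overline{U_{\delta_0/2}}$, with $U_{\delta_0/2}:=\{\dist(\cdot,\partial\Om)<\delta_0/2\}$, and set $\sigma_i^p:=\mathcal R(\gamma_i^p)\subset\bar U\setminus\Om$, a deformed segment with $\mathcal H^1(\sigma_i^p)\le\ell\min\{\delta_0/2,\mathcal H^1(\gamma_i)\}$. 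I give $\sigma_i^p$ the Burgers vector $b_i$ and the orientation making $\gamma_i^p\cup\sigma_i^p$ a single curve crossing $\partial\Om$ at $p$, so that the divergence charge it carries at $p$ is exactly the opposite of the one carried by $\gamma_i$. Then $\mu+\sum_{i,p}b_i\otimes\hat t^p_i\,\mathcal H^1\res\sigma^p_i$ has no divergence on $\Om\cup\partial\Om$ and restricts to $\mu$ on $\Om$ (every $\sigma^p_i$ lies in $\R^3\setminus\Om$).

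The measure just built still has finitely many free endpoints $y\in\R^3\setminus\bar\Om$, with lattice charges $c_y\in\mathcal B$; since $\Div\mu=0$ as a finite measure on $\R^3$, $\int_{\R^3}d(\Div\mu)=0$ and, the interior charges and those on $\partial\Om$ already cancelling, $\sum_y c_y=0$. I then fix $x_0\in\R^3\setminus\bar\Om$ and join each $y$ to $x_0$ by a polygonal path $\rho_y\subset\R^3\setminus\bar\Om$ of at most $C(\Om)$ segments and length $\le C(\Om)$ (routing it around $\partial\Om$ along the outer parallel surface of $U$ and then straight to $x_0$), with Burgers vector $c_y$ oriented to cancel the charge at $y$; as $\sum_y c_y=0$, $x_0$ collects net charge $0$, so
\[\tilde\mu:=\mu+\sum_{i,p}b_i\otimes\hat t^p_i\,\mathcal H^1\res\sigma^p_i+\sum_y c_y\otimes\hat\tau_y\,\mathcal H^1\res\rho_y\]
is a deformed polyhedral measure in $\mathcal M_{\mathcal B}(\R^3)$ with $\tilde\mu\res\Om=\mu$, proving (i) and (ii). Finally, estimates \eqref{ext1}--\eqref{ext3} I would get by bookkeeping with the bounded $C^2$-geometry of $\partial\Om$: the number of added pieces is bounded by $C(\Om)$ times the number of boundary touching segments of $\mu$ plus the number of free ends, the reflected pieces have length $\le\ell$ times that of the corresponding near-boundary portions of $\mu$, and the connecting paths have bounded length and number of pieces; together with $|b_i|\ge1$ from \eqref{normal} and $\mathcal H^1(\gamma_i)\le\diam\Om$ this gives the three bounds.

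The hard part is exactly this last step together with the curing of the free ends: unlike the half-space case, where $\mathcal R$ is a global isometry, no free ends arise and $\tilde\mu$ is literally the reflection of $\mu$, for a general $C^2$ domain one cannot reflect globally, so the reflected network ends at free points outside $\bar\Om$ that must be re-connected \emph{without re-entering} $\Om$ and in such a way that the total added mass, the total added weighted length and the number of added segments are all controlled by the corresponding quantities of $\mu$ with a constant depending only on $\Om$. Making this re-connection and the accompanying estimates precise is the delicate combinatorial and geometric core of the statement, and is where the refinement over \cite[Lemma 2.3]{C.G.M.} lies.
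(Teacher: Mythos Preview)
Your strategy---reflect $\mu$ across $\partial\Om$ in a collar and then cure the resulting free endpoints by polygonal paths in $\R^3\setminus\bar\Om$---is the same as the paper's. However, there is a genuine gap in your bookkeeping: the quantities you claim to control by $|\mu|(\Om)$ are in fact \emph{not} so controlled, because you fix the collar width $\delta_0/2$ once and for all rather than choosing it by a slicing argument.

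Concretely, consider $\Om=B_1(0)$ and let $\mu$ consist of $N$ disjoint straight chords of length $1/N$, each with unit Burgers vector and both endpoints on $\partial\Om$. This measure is polyhedral and divergence free in $\Om$, with $|\mu|(\Om)=1$ and $\sum_i|b_i|^2\mathcal H^1(\gamma_i)=1$, but with $2N$ boundary endpoints. Your construction produces $2N$ reflected arcs $\sigma_i^p$ and $2N$ free ends $y$, each joined to $x_0$ by a path $\rho_y$ of length $\ge c(\Om)>0$ carrying $|c_y|=1$. Hence the connecting paths alone contribute mass $\ge 2Nc(\Om)$ to $|\tilde\mu|(\R^3)$, contribute $\ge 2Nc(\Om)$ to $\sum_{i\in\tilde I}|\tilde b_i|^2\mathcal H^1(\tilde\gamma_i)$, and add $\ge 2N$ pieces to $\tilde I$. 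All three of \eqref{ext1}--\eqref{ext3} then fail for $N$ large. The inequalities $|b_i|\ge1$ and $\mathcal H^1(\gamma_i)\le\diam\Om$ that you invoke go the wrong way and cannot rescue this.

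The missing ingredient is the averaging/slicing step in the paper's proof: by the coarea formula one has
\[
\sum_{i\in I}|b_i|^p\int_{s_0/2}^{s_0}\mathcal H^0(\gamma_i\cap\partial\Om_s^i)\,ds\le C\sum_{i\in I}|b_i|^p\mathcal H^1(\gamma_i)\qquad(p=1,2),
\]
so one can \emph{choose} $\bar s\in(s_0/2,s_0)$ for which $\sum_i|b_i|^p\mathcal H^0(\gamma_i\cap\partial\Om_{\bar s}^i)\le C(\Om)\sum_i|b_i|^p\mathcal H^1(\gamma_i)$. One then reflects $\mu\res(\partial\Om)_{\bar s}^i$ (the whole restriction to the inner collar of width $\bar s$), so that the only free endpoints of the reflected measure lie on $\partial\Om_{\bar s}^e$ and correspond bijectively to $\supp\mu\cap\partial\Om_{\bar s}^i$; their number and their $|b|$- and $|b|^2$-weighted counts are thus controlled by $|\mu|(\Om)$ and by $\sum_i|b_i|^2\mathcal H^1(\gamma_i)$ respectively, and the three estimates follow. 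If you insert this choice of $\bar s$ into your outline (and reflect the full restriction to the collar rather than only the subsegments issuing from $\partial\Om$), the rest of your argument goes through.
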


 \begin{proof}The proof of Lemma  \ref{extension_2} can be straightforwardly achieved by following the lines of the proof of Lemma 2.3 in \cite{C.G.M.}, with the addition of minimal changes. We discuss it without much detail.
We can first define the extension $\tilde \mu$ in an outer neighbourhood of $\partial\Om$ by a reflection argument. If the boundary of $\Om$ is of class $C^2$ we can use the reflection $\Phi$ through $\partial\Om$ that is a bi-Lipschitz transformation which send the set 
$$\partial\Om^i_s:=\{x\in \Om:\dist(x,\partial\Om)=s\},$$
onto
$$\partial\Om^e_{s}:=\{x\in \R^3\setminus\Om:\dist(x,\partial\Om)=s\},$$
for any $s\leq s_0$, where $s_0>0$ is small enough and  depends only on the geometry of $\Om$.
Since by slicing it holds
\begin{equation*}
    \sum_{i\in I}|b_i|\int_{s_0/2}^{s_0}\mathcal{H}^0(\gamma_i\cap\partial\Om_s)ds\le|\mu|(\Om),
\end{equation*}
and
\begin{equation*}
    \sum_{i\in I}|b_i|^2\int_{s_0/2}^{s_0}\mathcal{H}^0(\gamma_i\cap\partial\Om_s)ds\le C\sum_{i\in I}|b_i|^2\mathcal{H}^1(\gamma_i),
\end{equation*}
we can find $\bar s\in(s_0/2,s_0)$ such that the slice of the measure $\mu$ on $\partial\Om_{\bar s}^i$ satisfies
\begin{align}
   &   \sum_{i\in I}|b_i|\mathcal{H}^0(\gamma_i\cap\partial\Om^i_{\bar s})\leq C|\mu|(\Om),\\
    &\sum_{i\in I}|b_i|^2\mathcal{H}^0(\gamma_i\cap\partial\Om^i_{\bar s})\leq C\sum_{i\in  I}|b_i|^2\mathcal H^1(\gamma_i),
\end{align}
for some constant $C>0$ independent of $\mu$.
In particular $\partial\Om^i_{\bar s}$ intersect the support of $\mu$ in a finite number of points, i.e.,
\begin{equation*}
    \mathcal{H}^0(\partial\Om^i_{\bar s}\cap(\supp\mu))\le|\mu|(\Om).
\end{equation*}
Thus we set $\widehat \mu:=\Phi_\sharp(\mu\res(\partial \Om)^i_{\bar s})$, where $(\partial \Om)^i_{\bar s}=\cup_{s\in(0,\bar s)}\partial \Om^i_s$. 
Notice that $\widehat \mu$ is of the form
\begin{equation}\label{muhat}
    \widehat \mu=\sum_{j\in \widehat I}b_j\otimes \hat t_j\mathcal H^1\res\widehat\gamma_j,
\end{equation}
where $\widehat\gamma_j$ is the image by $\Phi$ of some straight segment $\gamma_j$ belonging to the support of $\mu$ and $\hat t_j$ the unit tangent to $\widehat\gamma_j$.

We then extend $\widehat \mu$ outside $\Om$ by connecting the endpoints of $\widehat\gamma_j$ to each other using piecewise affine curves and associating to them a suitable multiplicity (for details we refer to Step 3 of \cite[Lemma 2.3]{C.G.M.}). The final extension is obtained as
\begin{equation}
\tilde \mu:=\widehat \mu+\mu.    
\end{equation}
Property \eqref{ext3} easily follows by the construction.

\end{proof}

\begin{oss} \label{ossex}
\begin{enumerate}[(a)]
    \item The support of $\tilde \mu$ is given by curves consisting of straight segments with the only exception of the part of $\supp\tilde \mu$ which intersect  the set 
$(\partial \Om)^e_{\bar s}=\cup_{s\in(0,\bar s)}\partial \Om^e_s$ where the curves $\gamma_i$ of the  support of $\tilde\mu$ are obtained by reflecting by $\Phi$ the segments of $\mu$ contained in $(\partial\Om)_{\bar s}^i$. We denote with  $I^e_{\bar s}$ the set indices corresponding to the curves $\gamma_i\subseteq (\partial \Om)^e_{\bar s}$.
\item The bi-Lipschitz map $\Phi$ defined in the proof of Lemma \ref{extension_2}  is not a global map in the whole of $\R^3$ (as it is required in the hypothesis of Lemma \ref{lemma_deform}). However with a covering argument and using the regularity of $\partial\Omega$ the map $\Phi$ can be extended locally in order to fulfil the assumptions in the lemma. Precisely by the fact that $\partial \Omega$ is $C^1$  we can choose $s_0$ such that   for every curve $\gamma_i\subseteq (\partial \Om)^e_{\bar s}$, with $i\in I^e_{\bar s}$,  we can construct a $\Phi_i$ satisfying \eqref{BL} which coincides with $\Phi$ in a neighbourhood of $\gamma_i$ of diameter $s_0$. Therefore,  if $\tilde\eta\in L^{3/2}(\R^3;\R^{3\times3})$ is the unique solution to \eqref{divsystem} with $\mu$ replaced by $\tilde\mu$, then $\tilde\eta$ still satisfies estimates \eqref{est_1} and \eqref{est_2};

\item Since the piecewise straight lines built outside $\Om\cup(\partial \Om)^e_{\bar s}$ are arbitrary, we can assume they consist of segments which have minimal length $h>0$. This will be useful when we extend a dilute measure $\mu$ in $\Om$;
\item From the proof of Lemma \ref{extension_2} we obtain that the number of points in $\supp(\gamma)\cap \partial \Om_{\bar s}$ weighted with the norm of their Burgers vectors (and their squares) is controlled, namely
\begin{equation}
 \sum_{i\in I}|b_i|\mathcal{H}^0(\gamma_i\cap\partial\Om^i_{\bar s})\leq C|\mu|(\Om),\qquad
\sum_{i\in I}|b_i|^2\mathcal{H}^0(\gamma_i\cap\partial\Om^i_{\bar s})\leq C\sum_{i\in  I}|b_i|^2\mathcal H^1(\gamma_i).
\label{numeroburgers}
\end{equation}
\end{enumerate}
\end{oss}
We set $\varphi_\eps(x)=\eps^{-3}\varphi(\frac{x}{\eps})$ a mollifier with $\varphi_\varepsilon\le c\frac{ \chi_{B_{\eps}(0)}}{|B_{\varepsilon}(0)|}$ for $c>0$.
\begin{prop} \label{stimaL2}
	Let $\mu_\varepsilon:=\sum_{i=1}^{M_\varepsilon}b_\varepsilon^i\otimes t_\varepsilon^i\mathcal{H}^1\res\gamma_\varepsilon^i$ be a sequence in ${\mathcal{M}}_\mathcal{B}^{h_\varepsilon,\alpha_\varepsilon}(\bar\Om)$ such that 
	\begin{equation}\label{eq0}
	\sum_{i=1}^{M_\varepsilon}|b_\varepsilon^i|^2\mathcal{H}^1(\gamma_\varepsilon^i)\le C<\infty.
	\end{equation}
	Then there exists a sequence $\tilde\eta_\eps\in L^{3/2}(\R^3;\R^{3\times3})$ with the following properties
		\begin{enumerate}[(i)]
		\item $\curl\tilde{\eta}_\varepsilon={\mu}_\varepsilon$ distributionally in $\Om$;
		\item there exists a constant $c>0$ (independent of $\varepsilon$) such that $$\int_{\Omega_\eps(\mu_\eps)}|\tilde{\eta}_\varepsilon|^2dx\le c|\log\varepsilon|,$$
		where $\Om_\eps(\mu_\eps):=\{x\in\Om:\dist(x,\supp\mu_\eps)>\eps\}$.
	\end{enumerate}
Furthermore 
 there exist a sequence $\hat\eta_\varepsilon\in L^{3/2}(\R^3;\R^{3\times3})\cap L^2(\Om;\R^{3\times3})$ and a constant $c>0$ (independent of $\eps$)  such that for every $\Om'\subset\subset\Om$ 
	\begin{enumerate}[(i')]
		\item $\curl\hat{\eta}_\varepsilon=\mu_\varepsilon*\varphi_\eps$ distributionally in $\Om'$;
		\item  $\int_\Omega|\hat{\eta}_\varepsilon|^2dx\le c|\log\varepsilon|.$
	\end{enumerate}
\end{prop}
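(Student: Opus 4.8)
The strategy is to build $\tilde\eta_\eps$ explicitly from an extension $\tilde\mu_\eps\in\mathcal M_{\mathcal B}(\R^3)$ of $\mu_\eps$ and then exploit the pointwise decay estimates \eqref{est_1}--\eqref{est_2}. First I would apply Lemma \ref{extension_2} to each $\mu_\eps$ to obtain a deformed polyhedral extension $\tilde\mu_\eps=\sum_{i\in\tilde I_\eps}\tilde b_\eps^i\otimes\tilde t_\eps^i\mathcal H^1\res\tilde\gamma_\eps^i$ satisfying \eqref{ext1}--\eqref{ext3}; in particular, by \eqref{ext2} and the hypothesis \eqref{eq0}, one has $\sum_{i\in\tilde I_\eps}|\tilde b_\eps^i|^2\mathcal H^1(\tilde\gamma_\eps^i)\le C$. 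By Remark \ref{ossex}(c) we may moreover take the added segments of minimal length $h_\eps$, so that $\tilde\mu_\eps$ is still $(h_\eps,\alpha_\eps')$-dilute for a comparable $\alpha_\eps'$ (this is where condition (d) of Definition \ref{def_dilute2} enters). Then let $\tilde\eta_\eps\in L^{3/2}(\R^3;\R^{3\times3})$ be the unique solution of \eqref{divsystem} with datum $\tilde\mu_\eps$, so that $\curl\tilde\eta_\eps=\tilde\mu_\eps=\mu_\eps$ in $\Om$, which gives (i). By Lemma \ref{lemma_deform} (applicable thanks to Remark \ref{ossex}(b)), $\tilde\eta_\eps$ satisfies both \eqref{est_1} and \eqref{est_2}.

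For (ii) I would estimate $\int_{\Om_\eps(\mu_\eps)}|\tilde\eta_\eps|^2dx$ by splitting the region according to distance from $\supp\tilde\mu_\eps$. Near the lines, using \eqref{est_2} one bounds $|\tilde\eta_\eps(x)|^2\lesssim\big(\sum_i|\tilde b_\eps^i|/\dist(x,\tilde\gamma_\eps^i)\big)^2$; the diluteness (separation $\ge\alpha_\eps h_\eps$ between distinct segments, angle $\ge\alpha_\eps$ at shared endpoints) lets one control the cross terms and reduce, up to a factor depending on $\alpha_\eps,h_\eps$, to the sum of the diagonal contributions $\sum_i|\tilde b_\eps^i|^2\int_{\{\eps<\dist(x,\tilde\gamma_\eps^i)<h_\eps\}}\dist(x,\tilde\gamma_\eps^i)^{-2}dx\lesssim\sum_i|\tilde b_\eps^i|^2\mathcal H^1(\tilde\gamma_\eps^i)\log(h_\eps/\eps)$, which is $\le C|\log\eps|$ by \eqref{eq0} and \eqref{diluteness1}. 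Far from the lines (say $\dist(x,\supp\tilde\mu_\eps)\ge h_\eps$, or alternatively once the tubes have been exhausted) one uses instead \eqref{est_1}: $|\tilde\eta_\eps(x)|^2\lesssim|\tilde\mu_\eps|(\R^3)^2/\dist(x,\supp\tilde\mu_\eps)^4$, whose integral over $\{\dist\ge h_\eps\}$ is controlled by $|\tilde\mu_\eps|(\R^3)^2/h_\eps\lesssim C^2/h_\eps$; the stronger diluteness assumption \eqref{diluteness2} is exactly what makes this term $\le C|\log\eps|$ (indeed $\alpha_\eps^4 h_\eps^6|\log\eps|>1$ controls such negative powers of $h_\eps,\alpha_\eps$). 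Care must be taken that $\Om$ is bounded so the outer integral is over a bounded set, and that the factors $\alpha_\eps^{-p}h_\eps^{-q}$ produced by the overlap and tube counting are all absorbed by \eqref{diluteness2}.

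For the mollified version, set $\hat\eta_\eps:=\tilde\eta_\eps*\varphi_\eps$. Since convolution commutes with $\curl$, $\curl\hat\eta_\eps=(\curl\tilde\eta_\eps)*\varphi_\eps=\mu_\eps*\varphi_\eps$ on any $\Om'\subset\subset\Om$ once $\eps$ is small enough that $\eps<\dist(\Om',\partial\Om)$, giving (i'). For (ii') the point is that $\hat\eta_\eps$ inherits a mollified pointwise bound: because $\varphi_\eps$ is supported in $B_\eps$, one gets $|\hat\eta_\eps(x)|\lesssim \sum_i|\tilde b_\eps^i|/(\eps+\dist(x,\tilde\gamma_\eps^i))$ (the bound \eqref{est_2} regularised at scale $\eps$), so that $|\hat\eta_\eps|$ is now square integrable over all of $\Om$, and the same splitting as above — with $\eps+\dist$ in place of $\dist$, and now including the core region $\{\dist<\eps\}$ which contributes $\sum_i|\tilde b_\eps^i|^2\mathcal H^1(\tilde\gamma_\eps^i)\cdot O(1)$ — yields $\int_\Om|\hat\eta_\eps|^2dx\le c|\log\eps|$. \textbf{The main obstacle} I anticipate is the bookkeeping of the diluteness-dependent constants: one must carefully track how the numbers $M_\eps$ of segments, the overlap of the tubular neighbourhoods, and the endpoint junctions conspire, and verify that every resulting negative power of $\alpha_\eps$ and $h_\eps$ is dominated by the single $|\log\eps|$ we are allowed, which is precisely the role of the technical hypothesis \eqref{diluteness2}; the geometric estimate near the junction points, where two deformed segments meet at angle $\ge\alpha_\eps$, is the delicate local computation.
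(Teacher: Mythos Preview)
Your plan diverges from the paper at the decisive step: you take $\tilde\eta_\eps$ to be the raw Biot--Savart field $\eta_\eps$ solving \eqref{divsystem}, whereas the paper \emph{modifies} $\eta_\eps$ inside carefully chosen tubes $U_\eps^i$ around each segment using Lemma~\ref{estimatefromabove} (and Lemma~\ref{estimatefromabovemoll} for $\hat\eta_\eps$). This is not a cosmetic difference. From the pointwise bound \eqref{est_2} the paper only extracts, via Cauchy--Schwarz, the crude estimate $|\eta_\eps|^2\le CN_\eps\sum_i|b_\eps^i|^2\dist(x,\gamma_\eps^i)^{-2}$, which carries an unavoidable factor $N_\eps\sim 1/h_\eps$; combined with Lemma~\ref{Stima_Palla} this is good enough \emph{outside} the tubes (where the logarithm is only $\log\frac{C}{\rho_\eps}=o(|\log\eps|)$), but inside the tubes it would give $\frac{1}{h_\eps}|\log\eps|$, which is too large. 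The modification of Lemma~\ref{estimatefromabove} replaces $\eta_\eps$ on each $U_\eps^i$ by a field whose leading energy is exactly $|b_\eps^i|^2\mathcal H^1(\gamma_\eps^i)\log\frac{\rho_\eps}{\eps}$, pushing the $(c^*)^2=C/h_\eps^2$ factor onto lower--order terms that are then absorbed by the specific choices $\rho_\eps=(\alpha_\eps h_\eps)^2$, $\delta_\eps=h_\eps^2$ and hypothesis \eqref{diluteness2}.

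Your substitute for this step is the sentence ``the diluteness \dots\ lets one control the cross terms and reduce \dots\ to the sum of the diagonal contributions''. That is precisely the hard claim, and you give no mechanism for it. Note that for $x$ in the tube around $\gamma_\eps^i$ the other segments contribute $\sum_{k\ne i}|b_\eps^k|/\dist(x,\gamma_\eps^k)$, and since $\sum_k|b_\eps^k|\le C/h_\eps$ while many $\gamma_\eps^k$ may sit at the minimal separation scale, this ``background'' term need not be subordinate to the diagonal one; the paper in fact records the bound $|\eta_\eps(x)|\le \tfrac{C}{h_\eps}\tfrac{|b_\eps^i|}{\dist(x,\gamma_\eps^i)}$ on $U_\eps^i$ and then invokes Lemma~\ref{estimatefromabove} with $c^*=C/h_\eps$ rather than trying to show the $1/h_\eps$ is spurious. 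Your near/far split at scale $h_\eps$ is also too coarse (tubes of radius $h_\eps$ are not disjoint when the separation is only $\alpha_\eps h_\eps$), and the use of \eqref{est_1} in the far field, while not wrong, is not how the paper proceeds: the paper uses only \eqref{est_2} together with Lemma~\ref{Stima_Palla}. Finally, the junctions are \emph{not} the delicate local computation here---the paper handles them simply by shortening the tubes by $\delta_\eps$ and letting the junction caps fall into the ``outer'' region $D_\eps$; the genuinely delicate ingredient you are missing is the in-tube replacement furnished by Lemmas~\ref{estimatefromabove}--\ref{estimatefromabovemoll}.
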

Before proving the proposition we fix some notation and give a preliminary result.
If $\gamma$ is a segment,  for every $\rho>0$ and $\delta\geq0$, we denote by $T_{\rho,\delta}(\gamma)$ the cylinder with vertical axis $\gamma$, radius $\rho$, and length $\mathcal{H}^1(\gamma)-2\delta$, namely
\begin{equation}\label{cilindri_ext}
    T_{\rho,\delta}(\gamma):=A(B'_{\rho}\times S_\delta),
\end{equation}
where $B'_\rho$ is the ball in $\R^2$ of radius $\rho$ centered at the origin, $S_\delta\subset\R$ is a segment of length $\mathcal{H}^1(\gamma)-2\delta$ and $A$ an affine transformation that maps $S_\delta$ in $\gamma$ and the midpoint of $S_\delta$ into the midpoint of $\gamma$. 
To shorten the notation, if $\gamma$ is fixed, we will simply write $T_{\rho,\delta}$. 
\begin{lem}\label{Stima_Palla}
	Let  $\Phi\colon\R^3\to\R^3$ be a bi-Lipschitz map and $\Omega\subseteq\R^3$. There exists a constant $C>0$ such that 
given a segment $\gamma$, a number $\delta\geq 0$, and positive parameters $\rho>\varepsilon$, it holds
\begin{equation}\label{stimapalla1}
    \int_{\Om\setminus ( U\cup V)}\frac{1}{\dist^2(x,\tilde\gamma)}dx\leq C\left( \mathcal{H}^1(\tilde\gamma)\log\frac{C}{\rho}+\delta\log\frac{C}{\varepsilon}+1\right),
\end{equation}
where $\tilde\gamma=\Phi(\gamma)$, $ U=\Phi(T_{\rho,\delta})$, $V=\Phi(T_{\varepsilon,0})$. 
\end{lem}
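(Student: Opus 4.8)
The plan is to reduce the statement, by two successive changes of variables, to an explicit integral over a fixed cylindrical region around a straight vertical segment, which one then just computes.

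\emph{Reduction to a straight segment.} First I would use the bi-Lipschitz map $\Phi$ itself as a change of variables $x=\Phi(y)$ in the integral $\int_{\Omega\setminus(U\cup V)}\dist^{-2}(x,\tilde\gamma)\,dx$. By Rademacher's theorem and the Lipschitz bound on $\Phi$ one has $|\det D\Phi|\le\Lambda$ a.e.\ for a constant $\Lambda=\Lambda(\Phi)$, while \eqref{BL} gives $\dist(\Phi(y),\Phi(\gamma))\ge\ell^{-1}\dist(y,\gamma)$; moreover $\Phi^{-1}(U)=T_{\rho,\delta}(\gamma)$, $\Phi^{-1}(V)=T_{\varepsilon,0}(\gamma)$, and since $\Omega$ is bounded $\Phi^{-1}(\Omega)$ is contained (after a harmless translation) in a ball $B_{R_0}$ with $R_0=R_0(\Phi,\Omega)$. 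Using also $\mathcal H^1(\gamma)\le\ell\,\mathcal H^1(\tilde\gamma)$, this reduces the claim to
\[
\int_{B_{R_0}\setminus(T_{\rho,\delta}(\gamma)\cup T_{\varepsilon,0}(\gamma))}\frac{dy}{\dist^2(y,\gamma)}\le C\Big(\mathcal H^1(\gamma)\log\tfrac{C}{\rho}+\delta\log\tfrac{C}{\varepsilon}+1\Big).
\]
If $\dist(B_{R_0},\gamma)\ge1$ the left-hand side is at most $|B_{R_0}|$ and we are done; otherwise $B_{R_0}$ lies within distance $1$ of $\gamma$, so — enlarging the domain of integration — it suffices to integrate over $\mathcal N_{\rho_0}\setminus(T_{\rho,\delta}\cup T_{\varepsilon,0})$, where $\mathcal N_{\rho_0}:=\{\dist(\cdot,\gamma)\le\rho_0\}$ and $\rho_0=\rho_0(\Phi,\Omega)$. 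Finally, applying the isometry $A^{-1}$ from the definition \eqref{cilindri_ext} of $T_{\rho,\delta}$, I may assume $\gamma=\{0\}\times\{0\}\times[0,L]$ with $L=\mathcal H^1(\gamma)$.

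\emph{The explicit computation.} In cylindrical coordinates $(r,\theta,z)$ one has $\dist(y,\gamma)=r$ for $z\in[0,L]$ and $\dist(y,\gamma)=\sqrt{r^2+\dist(z,\{0,L\})^2}$ otherwise, while $T_{\rho,\delta}=\{r<\rho,\ \delta<z<L-\delta\}$ and $T_{\varepsilon,0}=\{r<\varepsilon,\ 0<z<L\}$. I would split $\mathcal N_{\rho_0}\setminus(T_{\rho,\delta}\cup T_{\varepsilon,0})$ into: the central part $\{\rho\le r\le\rho_0,\ \delta<z<L-\delta\}$, whose contribution is $2\pi(L-2\delta)\log(\rho_0/\rho)$; the two collars $\{\varepsilon\le r\le\rho_0,\ z\in(0,\delta)\cup(L-\delta,L)\}$, contributing $4\pi\delta\log(\rho_0/\varepsilon)$; and the two spherical caps near the endpoints $p\in\{0,L\}$, where nothing has been removed and $\dist(y,\gamma)=|y-p|$, so their contribution is at most $\int_{B_{\rho_0}(p)}|y-p|^{-2}\,dy=4\pi\rho_0$. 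Summing and taking $C$ larger than $\ell^3\Lambda$, $\rho_0$ and the numerical constants gives the claim; the degenerate case $2\delta\ge L$ (in which $T_{\rho,\delta}=\emptyset$) is subsumed, since then the whole of $\{\varepsilon\le r\le\rho_0,\ 0<z<L\}$ contributes at most $2\pi L\log(\rho_0/\varepsilon)\le4\pi\delta\log(\rho_0/\varepsilon)$.

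I do not expect a genuine obstacle here: the computation is elementary, and the only care needed is the bookkeeping — checking that the bi-Lipschitz and isometric changes of variables carry the three cylinders $T_{\rho,\delta}$, $T_{\varepsilon,0}$ and the domain $\Omega$ to the expected sets, that the constants $\Lambda,\ell,R_0,\rho_0$ depend only on $\Phi$ and $\Omega$ and not on $\gamma,\delta,\rho,\varepsilon$, and that the endpoint caps (where no tube is subtracted, yet the integrand is $\R^3$-locally integrable) are absorbed into the additive constant. This is essentially the three-dimensional analogue, adapted to the trimmed cylinders $T_{\rho,\delta}$, of the estimate behind \cite[Lemma 4.1]{C.G.O.}.
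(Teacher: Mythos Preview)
Your proposal is correct and follows essentially the same route as the paper: a change of variables via $\Phi^{-1}$ to reduce to a straight segment (using the bi-Lipschitz comparison $\dist(\Phi(y),\tilde\gamma)\ge\ell^{-1}\dist(y,\gamma)$), a far/near dichotomy for the location of $\gamma$ relative to the bounded domain, and in the near case an explicit cylindrical-coordinate computation that splits the neighbourhood of $\gamma$ into a central annulus (giving the $\mathcal{H}^1(\gamma)\log(C/\rho)$ term), two end collars (giving the $\delta\log(C/\varepsilon)$ term), and two spherical caps at the endpoints (giving the additive constant). The paper's decomposition is exactly the cigar-shaped region you call $\mathcal N_{\rho_0}$; your treatment of the degenerate case $2\delta\ge L$ is an extra bit of care not spelled out in the paper.
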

\begin{proof}
Let $\tilde\gamma$, $U$ and $V$ be as in the statement. 
We observe that, for every $X\in\R^3$, $Y\in\gamma$ it holds
\begin{equation}\label{stima_palla_1}
    \dist(X,\gamma)\le|X-Y|\le \ell|\Phi(X)-\Phi(Y)|,
\end{equation}
where $\ell>0$ is the Lipschitz constant associated to $\Phi^{-1}$ and $\Phi(Y)\in\tilde\gamma$. If now we take the infimum over all $Y\in\gamma$ in \eqref{stima_palla_1}, we get
\begin{equation}\label{stima_palla_2}
    \dist(X,\gamma)\le \ell\dist(\Phi(X),\tilde\gamma)\quad\text{for all $X\in\R^3$}.
\end{equation}
\begin{figure}
	\centering
	\includegraphics[width=6cm
	]{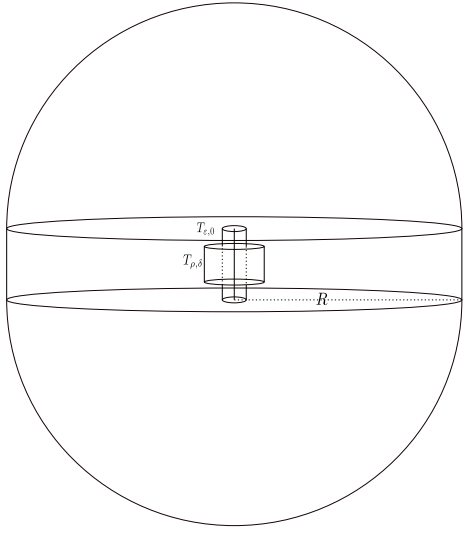}
	\caption{Large domain of diameter $R+\mathcal{H}^1(\gamma)$ containing $T_{\rho,\delta}$ and $T_{\varepsilon,0}$.}
	\label{bigcyl}
\end{figure}
By a change of variable and property \eqref{stima_palla_2}
we have that
\begin{align*}
    \int_{\Om\setminus( U\cup V)}\frac{1}{\dist^2(x,\tilde\gamma)}dx&\le C\int_{\Phi^{-1}(\Om)\setminus(T_{\rho,\delta}\cup T_{\varepsilon,0})}\frac{1}{\dist^2(\Phi(X),\tilde\gamma)}dX\\
   & \le \frac{C}{\ell^2}\int_{\Phi^{-1}(\Om)\setminus( T_{\rho,\delta}\cup T_{\varepsilon,0})}\frac{1}{\dist^2(X,\gamma)}dX.
\end{align*}
Next, we set $R=3\diam(\Phi^{-1}(\Om)))\le3C\diam(\Om)$ and assume $\dist(\gamma,\Phi^{-1}(\Om))\leq R/2$.
Up to a change of coordinates we can assume that $\gamma$ is centered at the origin, namely $$\gamma=\{(0,0)\}\times(-\mathcal{H}^1(\gamma)/2,\mathcal{H}^1(\gamma)/2),$$
and $T_{\rho,\delta}$ $T_{\varepsilon,0}$ are contained in the large domain of diameter $R+\mathcal{H}^1(\gamma)$ as represented in Figure \ref{bigcyl}.
Thus, setting $X=(X',X_3)$, we obtain
\begin{multline}\label{eq6}
     \int_{\Phi^{-1}(\Om)\setminus (T_{\rho,\delta}\cup T_{\varepsilon,0})}\frac{1}{\dist^2(X,\gamma)}dX\\\le |S_\delta|\int_{B'_R\setminus B'_{\rho}}\frac{1}{|X'|^2}dX'+2\delta\int_{B'_R\setminus B'_\varepsilon}\frac{1}{|X'|^2}dX'+\int_{B_R}\frac{1}{|X|^2}dX
   \\ \le 2\pi\mathcal{H}^1(\gamma)\log\frac{R}{\rho}+4\pi\delta\log\frac{R}{\varepsilon}+4\pi R.
\end{multline}
If instead $\dist(\gamma,\Phi^{-1}(\Omega))>R/2$ then we estimate as follows
\begin{equation}\label{eq7}
    \int_{\Phi^{-1}(\Omega)\setminus(T_{\rho,\delta}\cup T_{\varepsilon,0})}\frac{1}{\dist^2(X,\gamma)}dX\le |\Omega|\frac{4}{R^2}\le C.
    \end{equation}
\end{proof}
\begin{proof}[Proof of Proposition \ref{stimaL2}]

Let $\mu_\varepsilon\in{\mathcal{M}}_\mathcal{B}^{h_\varepsilon,\alpha_\varepsilon}(\bar\Om)$ be as in the statement 
\begin{equation*}
    \mu_\varepsilon=\sum_{i=1}^{M_\varepsilon}b_\varepsilon^i\otimes t_\varepsilon^i\mathcal{H}^1\res\gamma_\varepsilon^i,
\end{equation*}
and consider the sequence of deformed polyhedral extended measures $\tilde\mu_\varepsilon\in\mathcal{M}_\mathcal{B}(\R^3)$ given by Lemma \ref{extension_2} so that 
\begin{equation*}
    \tilde\mu_\varepsilon=\sum_{i=1}^{N_\varepsilon}b_\varepsilon^i\otimes t_\varepsilon^i\mathcal{H}^1\res\gamma_\varepsilon^i\quad\text{for $N_\varepsilon\ge M_\varepsilon$}.
\end{equation*} 
In particular by \eqref{ext1} and \eqref{ext2} we have
\begin{equation}\label{EQ}
    |\tilde\mu_\varepsilon|(\R^3)\le C,\quad
    \sum_{i=1}^{N_\varepsilon}|b_\varepsilon^i|^2\mathcal{H}^1(\gamma_\varepsilon^i)\le C,
\end{equation}
and therefore, from \eqref{eq0} and by the definition of ${\mathcal{M}}_\mathcal{B}^{h_\varepsilon,\alpha_\varepsilon}(\bar\Om)$ we infer that
\begin{equation}\label{eq3}
   M_\varepsilon h_\varepsilon\le \sum_{i=1}^{M_\varepsilon}|b_\varepsilon^i|h_\varepsilon
   \le \sum_{i=1}^{M_\varepsilon}|b_\varepsilon^i|^2 h_\varepsilon\le C.
\end{equation}
Analogously by \eqref{EQ} and recalling Remark \ref{ossex} (c), (d),
\begin{equation}\label{thelasttry}
N_\varepsilon h_\varepsilon\le\sum_{i=1}^{N_\varepsilon}|b_\varepsilon^i|h_\varepsilon\le \sum_{i=1}^{N_\varepsilon}|b_\varepsilon^i|^2h_\varepsilon\le C.
\end{equation}
Now let $\eta_\varepsilon\in L^{3/2}(\mathbb{R}^3,\mathbb{R}^{3\times3})$ be the distributional solution to \eqref{divsystem} with $\mu$ replaced by $\tilde\mu_\varepsilon$, 
that, thanks to Lemma \ref{lemma_deform}, satisfies 
\begin{equation}
    \label{eq2}
    |\eta_\varepsilon(x)|\le C\sum_{i=1}^{N_\varepsilon}\frac{|b_\varepsilon^i|}{\dist(x,\gamma_\varepsilon^i)}\quad\text{for $x\notin\supp\tilde\mu_\varepsilon$.}
\end{equation}
To construct $\tilde\eta_\eps$ (and consequently $\hat\eta_\eps$) we will modify $\eta_\varepsilon$ close to the support of $\mu_\varepsilon$.
 We start by fixing two parameters $\rho_\eps$ and $\delta_\eps$ and to denote  by $U^i_\varepsilon$, $V^i_\varepsilon$ (for $i=1,...,N_\varepsilon$), the cylinders defined as follows:
 \begin{itemize}
 	\item If $\gamma^i_\varepsilon$ is a straight segment then
 	\begin{equation*}
 	U^i_\varepsilon:=T_{\rho_\varepsilon,\delta_\varepsilon}(\gamma^i_\varepsilon)\quad V^i_\varepsilon:=T_{\varepsilon,0}(\gamma^i_\varepsilon);
 	\end{equation*}
 	\item If $\gamma^i_\varepsilon$ is obtained by reflecting some $\gamma^j_\varepsilon$ ($i\ne j$), then $U^i_\varepsilon$ and $V^i_\varepsilon$ are the reflections of $U^j_\varepsilon$ and $V^j_\varepsilon$ (see Figure \ref{cilstor}).
 \end{itemize}
	\begin{figure}
	\centering
	\includegraphics{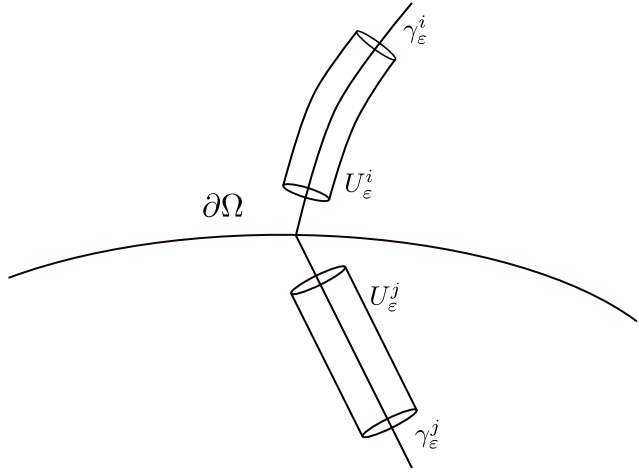}
	\caption{Deformed cylinder $U_\varepsilon^i$ obtained by reflection.}\label{cilstor}
\end{figure}
By the diluteness assumption, if 
\begin{equation}\label{rhodelta}
h_\eps \alpha_\eps\gg \alpha_\eps\delta_\eps\gg\rho_\eps,
\end{equation} then the cylinders $U^i_\varepsilon$ 
for $i\in\{1,...,M_\varepsilon\}$ are contained inside $\Om$, while all the  $U^i_\varepsilon$ with $i\in \{M_\varepsilon+1,\dots,N_\varepsilon\}$ lie outside $\Om$. Further 
  \begin{equation*}
      U_\varepsilon^i\cap U_\varepsilon^k=\emptyset,\quad U_\varepsilon^i\cap\gamma_\varepsilon^k=\emptyset
      \quad\text{for $i,k=1,...,N_\varepsilon$, $i\ne k$. }
  \end{equation*}
  For the convenience of the reader we divide the rest of the proof in 3 steps.\\
\noindent \textit{Step 1. Estimate of the $L^2$ norm of $\eta_\varepsilon$ in $D^\varepsilon:=\tilde\Omega_\varepsilon(\mu_\varepsilon)\setminus \cup_iU_\varepsilon^i$, with $\tilde\Omega_\varepsilon(\mu_\varepsilon):=\{x\in(\Om)_\eps:\dist(x,\supp\mu_\eps)>\eps\}$.}\\
Estimate \eqref{eq2} gives
 \begin{align}
     \int_{D^\varepsilon}|\eta_\varepsilon|^2dx&\le C\int_{D^\varepsilon}\left(\sum_{i=1}^{N_\varepsilon}\frac{|b_\varepsilon^i|}{\dist(x,\gamma_\varepsilon^i)}\right)^2dx\notag\\&\le CN_\varepsilon\sum_{i=1}^{N_\varepsilon}\int_{D_\varepsilon}\frac{|b_\varepsilon^i|^2}{\dist^2(x,\gamma_\varepsilon^i)}dx\notag\\
     &\le CN_\varepsilon\sum_{i=1}^{N_\varepsilon} \int_{(\Omega)_\eps\setminus ( U^i_\varepsilon\cup V^i_\varepsilon)}\frac{|b_\varepsilon^i|^2}{\dist^2(x,\gamma_\varepsilon^i)}dx.\label{eq4}
 \end{align}
We then use Lemma \ref{Stima_Palla} replacing $\Omega$ with $\Omega'\supset\supset\Omega$ and get
\begin{align}
\int_{(\Omega)_\eps\setminus ( U^i_\varepsilon\cup V^i_\varepsilon)}\frac{1}{\dist^2(x,\gamma_\varepsilon^i)}dx&\leq \int_{\Omega'\setminus ( U^i_\varepsilon\cup V^i_\varepsilon)}\frac{1}{\dist^2(x,\gamma_\varepsilon^i)}dx\notag\\
&\leq C\big(\mathcal{H}^1(\gamma_\varepsilon^i)\log\frac{C}{\rho_\varepsilon}+4\pi\delta_\varepsilon\log\frac{C}{\varepsilon}+1\big),
\label{eq5}
\end{align}
for some positive constant depending only on the domain $\Om'$.
Then from \eqref{thelasttry}, \eqref{eq4}, and \eqref{eq5}, it follows that
\begin{align}\label{eq8}
   \int_{D_\varepsilon}|\eta_\varepsilon|^2dx&\le CN_\varepsilon \sum_{i=1}^{N_\varepsilon}|b_\varepsilon^i|^2\left(\mathcal{H}^1(\gamma_\varepsilon^i)\log\frac{C}{\rho_\varepsilon}+\delta_\varepsilon\log\frac{C}{\varepsilon}+1\right)\notag\\
    &\le C\left(\frac{1}{h_\varepsilon}\log\frac{C}{\rho_\varepsilon}+\frac{\delta_\varepsilon}{h_\varepsilon^2}\log\frac{C}{\varepsilon}+\frac{1}{h_\varepsilon^2}\right).
\end{align}
\noindent\textit{Step 2. Construction of $\tilde\eta_\eps$.}\\
We observe that by \eqref{thelasttry} and \eqref{eq2} it follows 
\begin{equation*}\label{eeq1}
    |\eta_\varepsilon(x)|\le \frac{C}{ h_\varepsilon}\frac{|b^i_\varepsilon|}{\dist(x,\gamma^i_\varepsilon)}\quad\forall x\in U_\varepsilon^i,\,i\in\{1,...,M_\varepsilon\}
\end{equation*}
(recall \eqref{normal}).
Further it holds 
\begin{equation*}\label{eeq2}
    \curl\eta_\varepsilon= b_\varepsilon^i\otimes t_\varepsilon^i\mathcal{H}^1\res\gamma^i_\varepsilon\quad\text{in $U^i_\varepsilon$},\,i\in\{1,...,M_\varepsilon\}
\end{equation*}
and then applying Lemma \ref{estimatefromabove} with $c^*=C/h_\varepsilon$, $R=\rho_\eps$, and $r=\varepsilon$ we find for every $i\in\{1,...,M_\varepsilon\}$ a function $\tilde\eta^i_\varepsilon\in L^1(U^i_\varepsilon,\R^{3\times3})$ such that 
$$ \tilde\eta_\varepsilon^i=\eta_\varepsilon\quad\text{in a neighbourhood of $\partial U^i_\varepsilon$},$$
$$\curl\tilde\eta_\varepsilon^i=\curl\eta_\varepsilon\quad\text{in $U_\varepsilon^i$,}$$
and  
\begin{equation}\label{Eq3}
    \int_{U_\varepsilon^i\setminus V^i_\varepsilon}|\tilde\eta_\varepsilon^i|^2dx\le C|b_\varepsilon^i|^2\mathcal{H}^1(\gamma_\varepsilon^i)\log\frac{\rho_\varepsilon}{\varepsilon}+\frac{C}{h_\varepsilon^2}|b_\varepsilon^i|^2\left(\mathcal{H}^1(\gamma_\varepsilon^i)\left(\log\frac{\rho_\varepsilon}{\varepsilon}\right)^{1/2}+\frac{(\mathcal{H}^1(\gamma_\varepsilon^i))^3}{\rho_\varepsilon^2}\right).
\end{equation}

Now define $\tilde{\eta}_\varepsilon$ as follows
\begin{equation}
    \label{eq17}
  \tilde{\eta}_\varepsilon(x):=  \begin{cases}
 \tilde\eta_\varepsilon^i(x)&\text{if $x\in U_\varepsilon^i$ for $i=1,...,M_\varepsilon$}\\
  \eta_\varepsilon(x)&\text{otherwise in $\R^3$.}
    \end{cases}
\end{equation}
Then by estimate \eqref{eq8} in Step 1 and \eqref{Eq3}  we have that
\begin{multline*}
    \int_{\tilde\Omega_\varepsilon(\mu_\varepsilon)}|\tilde{\eta}_\varepsilon|^2dx=\int_{D_\varepsilon}|\eta_\varepsilon|^2dx+\sum_{i=1}^{M_\varepsilon}\int_{U^i_\varepsilon\setminus V_\varepsilon^i}|\tilde\eta_\varepsilon^i|^2dx\\
    \le C\left(\frac{1}{h_\varepsilon}\log\frac{C}{\rho_\varepsilon}+\frac{\delta_\varepsilon}{h_\varepsilon^2}\log\frac{C}{\varepsilon}+\frac{1}{h_\varepsilon^2}+\log\frac{\rho_\varepsilon}{\varepsilon}+\frac{1}{h_\varepsilon^2}\left(\log\frac{\rho_\varepsilon}{\varepsilon}\right)^{1/2}+\frac{1}{h_\varepsilon^2\rho_\varepsilon^2}\right)\\
    \le
       C\left(\frac{\delta_\varepsilon}{h_\varepsilon^2}\log\frac{C}{\varepsilon}+\log\frac{\rho_\eps}{\eps}+\frac{1}{h_\varepsilon^2}\left(\log\frac{\rho_\varepsilon}{\varepsilon}\right)^{1/2}+\frac{1}{h_\varepsilon^2\rho_\varepsilon^2}\right).
       \end{multline*}
Now choosing $\delta_\eps=h_\eps^2$ and $\rho_\eps=(\alpha_\eps h_\eps)^2$, which is compatible with \eqref{rhodelta}, we immediately obtain
\begin{equation}
    \label{eq17bis}
    \int_{\tilde\Omega_\varepsilon(\mu_\varepsilon)}|\tilde{\eta}_\varepsilon|^2dx\le c|\log\varepsilon|.
\end{equation}
\noindent\textit{Step 3. Construction of $\hat\eta_\eps$.}\\
Take the function $\eta_\eps*\varphi_\eps$, then it holds
\begin{equation*}
	\curl(\eta_\eps*\varphi_\eps)=\tilde\mu_\eps*\varphi_\eps \quad\text{in}\quad\R^3,
\end{equation*}
and
\begin{equation*}\label{eeq1moll}
|\eta_\varepsilon*\varphi_\eps(x)|\le \frac{C}{ h_\varepsilon}\frac{|b^i_\varepsilon|}{\dist(x,\gamma^i_\varepsilon)+\eps}\quad\forall x\in U_\varepsilon^i,\,i\in\{1,...,M_\varepsilon\}.
\end{equation*}
Thus we apply Lemma \ref{estimatefromabovemoll} with $c^*=C/h_\eps$, $R=\rho_\eps$, and $r=\varepsilon$ and get for every $i\in\{1,...,M_\varepsilon\}$ a function $\hat\eta^i_\varepsilon\in L^1(U^i_\varepsilon,\R^{3\times3})$ such that 
$$ \hat\eta_\varepsilon^i=\eta_\varepsilon*\varphi_\eps\quad\text{in a neighbourhood of $\partial U^i_\varepsilon$},$$
$$\curl\hat\eta_\varepsilon^i=\curl(\eta_\varepsilon*\varphi_\eps)\quad\text{in $U_\varepsilon^i$,}$$
and  
\begin{equation}\label{Eq3moll}
\int_{U_\varepsilon^i}|\hat\eta_\varepsilon^i|^2dx\le C|b_\varepsilon^i|^2\mathcal{H}^1(\gamma_\varepsilon^i)\log\frac{\rho_\varepsilon}{\varepsilon}+\frac{C}{h_\varepsilon^2}|b_\varepsilon^i|^2\left(\mathcal{H}^1(\gamma_\varepsilon^i)\left(\log\frac{\rho_\varepsilon}{\varepsilon}\right)^{1/2}+\frac{(\mathcal{H}^1(\gamma_\varepsilon^i))^3}{\rho_\varepsilon^2}\right).
\end{equation} 
Next we define $\hat\eta_\eps$ in the following way
\begin{equation}
\label{eq17moll}
\hat{\eta}_\varepsilon(x):=  \begin{cases}
\hat\eta_\varepsilon^i(x)&\text{if $x\in U_\varepsilon^i$ for $i=1,...,M_\varepsilon$}\\
\eta_\varepsilon*\varphi_\eps(x)&\text{otherwise in $\R^3$.}
\end{cases}
\end{equation}
In particular we obtain that 
\begin{equation}\label{eq18moll}
\int_\Omega|\hat{\eta}_\varepsilon|^2dx=   
\int_{\Omega\setminus\cup_{i=1}^{M_\eps}U^i_\eps}|\eta_\varepsilon*\varphi_\eps|^2dx+    \sum_{i=1}^{M_\eps}\int_{U^i_\eps}|\hat{\eta}_\varepsilon^i|^2dx.
\end{equation}
The estimate of second term on the right-hand side of \eqref{eq18moll} easily follows by \eqref{Eq3moll} arguing as in step 2. To estimate the first term we write
\begin{equation*}
\int_{\Omega\setminus\cup_{i=1}^{M_\eps}U^i_\eps}|\eta_\varepsilon*\varphi_\eps|^2dx = \int_{W_\eps^1}|\eta_\varepsilon*\varphi_\eps|^2dx+\int_{W_\eps^2}|\eta_\varepsilon*\varphi_\eps|^2dx,
\end{equation*}
where 
$$ W_\eps^1:=\Omega_{2\eps}(\mu_\eps)\setminus(\cup_{i=1}^{M_\eps}U^i_\eps),\quad W_\eps^2:=\big(\Omega\setminus(\cup_{i=1}^{M_\eps}U^i_\eps)\big)\setminus W_\eps^1. $$
By definition it holds
\begin{equation*}
\int_{W_\eps^1}|\eta_\varepsilon*\varphi_\eps|^2dx\le \int_{(W^1_\eps)_\eps}|\eta_\eps|^2dx\le \int_{\tilde{\Omega}_\varepsilon(\mu_\varepsilon)}|\tilde{\eta}_\varepsilon|^2dx\le C|\log\varepsilon|,
\end{equation*}
where the second inequality follows since $\eta_\eps=\tilde\eta_\eps$ in a neighbourhood of $\partial U^i_\eps$ of thickness greater than $\eps$.
For the remaining term we use that from \eqref{eq2}
\begin{equation*}
	|\eta_\eps*\varphi_\eps|(x)\le C \sum_{i=1}^{N_\eps}\frac{|b^i_\eps|}{\dist(x,\gamma^i_\eps)+\eps}\le \frac{C}{h_\eps} \sum_{i=1}^{N_\eps}\frac{1}{\dist(x,\gamma^i_\eps)+\eps},
\end{equation*}
and get
\begin{align*}
	\int_{W_\eps^2}|\eta_\varepsilon*\varphi_\eps|^2dx&\le \frac{C}{h_\eps^3}\sum_{i=1}^{N_\eps}\int_{W^2_\eps}\frac{1}{(\dist(x,\gamma_\eps^i)+\eps)^2}dx\\
	&\le \frac{C}{h_\eps^4}\int_{W^2_\eps}\frac{1}{(\dist(x,\gamma_\eps)+\eps)^2}dx\\
	&\le \frac{C}{h_\eps^4}\sum_{i=1}^{N_\eps}\int_{(\gamma^i_\eps)_{2\eps}}\frac{1}{(\dist(x,\gamma^i_\eps)+\eps)^2}dx\\
	&\le \frac{C}{h_\eps^4}\sum_{i=1}^{N_\eps}(\mathcal{H}^1(\gamma^i_\eps)+\eps)\le \frac{C}{h_\eps^4},
\end{align*}
where $\gamma_\eps=\cup_i\gamma_\eps^i$. By \eqref{diluteness2} we conclude the proof.

\end{proof}

\black

\section{Proof of Compactness and $\Gamma$-Limit}\label{sec4}
We finally pass to the proof of the main results, namely Theorems \ref{compactness} and \ref{gammalimit}.

\begin{proof}[Proof of Theorem \ref{compactness}]

\noindent \textit{Compactness of $\mu_j$.}
Let $(\mu_j,\beta_j)\in{\mathcal{M}}_\mathcal{B}^{h_{\varepsilon_j},\alpha_{\varepsilon_j}}(\bar\Omega)\times\mathcal{AS}_{\varepsilon_j}(\mu_j)$ be as in the statement. Since $\mu_j$ is dilute, we can write $\mu_j=\sum_ib_j^i\otimes t_j^i\mathcal{H}^1\res\gamma_j^i$ where $\gamma_j^i\subset\Omega$ satisfy the conditions of Definition \ref{def_dilute2}. We choose the parameters 
\begin{equation}
    \label{parameters}
    \rho_j:=(\alpha_{\varepsilon_j} h_{\varepsilon_j})^3\quad\delta_j:=(\alpha_{\varepsilon_j} h_{\varepsilon_j})^2
\end{equation}
and define, with a little abuse of notation, the cylinders
\begin{equation}\label{cylinders}
U^i_j:=T_{\rho_j,\delta_j}(\gamma^i_j)\quad V^i_j:=T_{\varepsilon_j,0}(\gamma^i_j)
\end{equation}
according to definition \eqref{cilindri_ext} given in Section \ref{sec3}.

It turns out that, by the choice of $\rho_j$ and $\delta_j$, the cylinders $U_j^i$ are pairwise disjoint and $U_j^i\cap\gamma_j^k=\emptyset$ for all $i\ne k$; 
therefore we have
\begin{equation}
    \mathcal{E}_{\varepsilon_j}(\mu_j,\beta_j)\ge\sum_i\int_{U_j^i\setminus V_j^i}W(\beta_j)dx.\label{comp4}
\end{equation}
Recalling estimate \eqref{consequence1} we find a constant $c>0$ such that \begin{equation}\label{**}
\frac{1}{|S_j^i|\varepsilon_j^2\log\frac{\rho_j}{\varepsilon_j}}\int _{U_j^i\setminus V_j^i}W(\beta_j)dx\ge \biggl(1-\frac{\rho_j}{|S_j^i|}\biggr)c|b_j^i|^2,
\end{equation}
where $|S_j^i|=\mathcal{H}^1(\gamma_j^i)-2\delta_j$.
This and \eqref{comp4} give
\begin{equation}\label{***}
\begin{split}
    \mathcal{F}_{\varepsilon_j}(\mu_j,\beta_j)&\ge c\left(1-\frac{\rho_j}{|S_j^i|}\right)\frac{\log(\rho_j/\varepsilon_j)}{|\log\varepsilon_j|}\sum_i|b_j^i|^2|S_j^i|\\&\ge c\left(1-\frac{\rho_j}{|S_j^i|}\right)(1-2h_{\varepsilon_j}\alpha_{\varepsilon_j}^2)\frac{\log(\rho_j/\varepsilon_j)}{|\log\varepsilon_j|}\sum_i|b_j^i|^2\mathcal{H}^1(\gamma_j^i).
    \end{split}
\end{equation}
By \eqref{diluteness1}, \eqref{diluteness2} and the definition of $\rho_\eps$ we get
\begin{equation}
   C\ge \sum_i|b_j^i|^2\mathcal{H}^1(\gamma_j^i),\label{comp5}
\end{equation}
and in particular, since $|b_j^i|\ge 1$, we get
\begin{equation}
   C\ge \sum_i|b_j^i|\mathcal{H}^1(\gamma_j^i)=|\mu_j|(\Omega).\label{comp6}
\end{equation}
By \eqref{comp6} and \cite[Theorem 2.5]{C.G.M.}, up to subsequences, we derive
\begin{equation*}
    \mu_{\varepsilon_j}\stackrel{*}{\rightharpoonup}\mu\in\mathcal{M}_\mathcal{B}(\Omega).
\end{equation*}

\noindent\textit{Compactness of $\beta_j$.} 
For every $j$ let $\Om_j\subset\subset\Om$ be as in the statement, namely
$$ \Om_j:=\Om\setminus\{x\in\Om:\dist(x,\partial\Om)\le\varepsilon_j\}.$$
Notice that, since $\Om$ is of class $C^2$, any $\Om_j$ can be obtained by a bi-Lipschitz transformation of $\Om$ with Lipschitz constant $L_j\le L$, moreover the characteristic function $\chi_{\Om_j}$ converges to $1$ in measure. 
Thanks to Proposition \ref{stimaL2} there exist a constant $c>0$, not depending on $\varepsilon_j$, and a sequence \[\hat{\eta}_j\in L^{3/2}(\mathbb{R}^3;\mathbb{R}^{3\times3})\cap L^2(\Om;\mathbb{R}^{3\times3}),\] 
such that 
\begin{equation}\label{comp7}
    \int_\Om|\hat\eta_j|^2dx\le c|\log\varepsilon_j|,
\end{equation}
and
\begin{equation}\label{stima-utile-j}
\curl\hat\eta_j=\mu_j*\varphi_{\varepsilon_j}\quad\text{in $\Om_j$.}
\end{equation} 
In particular
\begin{equation*}
    \curl(\beta_j-\varepsilon_j\hat\eta_j)=0\quad\text{in $\Om_j$.}
  \end{equation*}  
 Therefore there exists $u_j\in W^{1,2}(\Om_j;\R^3)$ such that
\begin{equation}\label{betadec}
    \beta_j=\varepsilon_j\hat{\eta}_j+\nabla u_j\quad\text{in $\Omega_j$.}
\end{equation}
Using the rigidity estimate on $\Omega_j$ we find a constant $C_j>0$ and a sequence $\{Q_j\}\subset SO(3)$ such that
\begin{equation*}
    \int_{\Omega_j}|\nabla u_j-Q_j|^2dx\le C_j\int_{\Omega_j}\dist^2(\nabla u_j,SO(3))dx.
\end{equation*}
Now Theorem 5 in \cite{F.J.M.} and the hypothesis on $\Omega_j$ imply that $C_j\le C$, for some constant $C>0$ independent of $j$.
This together with the growth conditions on $W$ gives
\begin{align*}
C\varepsilon_j^2|\log\varepsilon_j|&\ge\int_{\Omega}\dist^2(\beta_j,SO(3))dx\\&\ge \frac{1}{2}\int_{\Omega_j}\dist^2(\nabla u_j,SO(3))dx-\int_{\Omega_j}\varepsilon_j^2|\hat{\eta}_j|^2dx\\
& \ge \frac{1}{2C}\int_{\Omega_j}|\nabla u_j-Q_j|^2dx-\int_{\Omega_j}\varepsilon_j^2|\hat{\eta}_j|^2dx\\&\ge\frac{1}{4C}\int_{\Omega_j}|\beta_j-Q_j|^2dx-\frac{1}{2C}\int_{\Omega_j}\varepsilon_j^2|\hat{\eta}_j|^2dx,
\end{align*}
and by \eqref{comp7} we conclude
\begin{equation}\label{equazionediprima}
    \int_{\Omega_j}|\beta_j-Q_j|^2dx\le C\varepsilon_j^2|\log\varepsilon_j|.
\end{equation}
Hence, there exists $\beta\in L^2(\Omega,\mathbb{R}^{3\times3})$ with $\curl\beta=0$ and $Q\in SO(3)$, such that, up to subsequences, it holds
\begin{equation*}
    \frac{(Q_j)^T\beta_j-I}{\varepsilon_j\sqrt{|\log\varepsilon_j|}}\chi_{\Om_j}\rightharpoonup\beta\quad\text{in $L^2(\Omega,\mathbb{R}^{3\times3})$  and}\quad Q_j\to Q\in SO(3).
\end{equation*}

 \end{proof}

\begin{oss}
	The crucial point in order to obtain the compactness of the $\beta$'s is the decomposition \eqref{betadec} which is guaranteed by \eqref{stima-utile-j}. In the case in which we fix the extension measure in the definition of admissible configurations $\mathcal{AS}^*_{\varepsilon_j}(\mu_j)$ for the functionals $\mathcal{F}_{\varepsilon_j}(\mu_j,\cdot)$, see Remark \ref{compactness2},
	thanks to Proposition \ref{stimaL2} we obtain the decomposition in the whole of $\Omega$. Eventually we can proceed as above and obtain  \eqref{equazionediprima} in $\Om$.
 \end{oss}

\begin{prop}[Lower Bound]\label{lowerboundstep1}
For any sequence $\varepsilon_j\to0$ and for any  $(\mu_{j},\beta_{j})\in{\mathcal{M}}_\mathcal{B}^{h_{\varepsilon_j},\alpha_{\varepsilon_j}}(\bar\Omega)\times\mathcal{AS}_{\varepsilon_j}(\mu_j)$ converging to $(\mu,\beta,Q)\in\mathcal{M}_\mathcal{B}(\Omega)\times L^2(\Omega;\mathbb{R}^{3\times3})\times SO(3)$ in the sense of Definition \ref{def_conv} with $\curl\beta=0$, we have
\begin{equation}\label{tesiliminf}
    \mathcal{F}_0(\mu,\beta,Q)\le\liminf_{j\to\infty}\mathcal{F}_{\varepsilon_j}(\mu_j,\beta_j).
\end{equation}
\end{prop}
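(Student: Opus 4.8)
The plan is to bound $\mathcal F_{\varepsilon_j}(\mu_j,\beta_j)$ from below by isolating, far from the dislocations, a region that produces the quadratic term $\tfrac12\int_\Omega\mathbb C\beta:\beta\,dx$ via Taylor expansion and weak lower semicontinuity, and, around each segment, a thin hollow cylinder that produces the line tension $\int_\gamma\tilde\Psi_0(Q^Tb,t)\,d\mathcal H^1$ through the auxiliary cell formula $\Psi_\lambda^{nl}$. We may assume $\liminf_j\mathcal F_{\varepsilon_j}(\mu_j,\beta_j)<\infty$ and pass to a subsequence with $\mathcal F_{\varepsilon_j}(\mu_j,\beta_j)\le C$; then, writing $\mu_j=\sum_i b_j^i\otimes t_j^i\mathcal H^1\res\gamma_j^i$, the estimates in the proof of Theorem \ref{compactness} give $\sum_i|b_j^i|^2\mathcal H^1(\gamma_j^i)\le C$, $|\mu_j|(\Omega)\le C$ and \eqref{equazionediprima}. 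With $\rho_j:=(\alpha_{\varepsilon_j}h_{\varepsilon_j})^3$, $\delta_j:=(\alpha_{\varepsilon_j}h_{\varepsilon_j})^2$ as in that proof, set $U_j^i:=T_{\rho_j,\delta_j}(\gamma_j^i)$, $V_j^i:=T_{\varepsilon_j,0}(\gamma_j^i)$ and $R_j^i:=U_j^i\setminus V_j^i$. By diluteness and the scale ordering $\varepsilon_j\ll\rho_j\ll\delta_j\ll\alpha_{\varepsilon_j}h_{\varepsilon_j}$ (see \eqref{diluteness1}) the $U_j^i$ are pairwise disjoint, avoid the other $\gamma_j^k$, are contained in $\Omega_j$, and $|\bigcup_iU_j^i|\le C\rho_j^2\to0$; since $W\ge0$,
\[
\mathcal F_{\varepsilon_j}(\mu_j,\beta_j)\ \ge\ \frac1{\varepsilon_j^2|\log\varepsilon_j|}\int_{\Omega_j\setminus\bigcup_iU_j^i}W(\beta_j)\,dx\ +\ \frac1{\varepsilon_j^2|\log\varepsilon_j|}\sum_i\int_{R_j^i}W(\beta_j)\,dx.
\]

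For the first term let $\eta_j:=(Q_j^T\beta_j-I)/(\varepsilon_j\sqrt{|\log\varepsilon_j|})\,\chi_{\Omega_j}$, so that $\eta_j\rightharpoonup\beta$ in $L^2(\Omega;\R^{3\times3})$ by Theorem \ref{compactness}(ii). On $\Omega_j$ frame indifference gives $W(\beta_j)=W(I+\varepsilon_j\sqrt{|\log\varepsilon_j|}\,\eta_j)$; truncating $\eta_j$ on $\{|\eta_j|>\varepsilon_j^{-1/2}\}$ and using \eqref{taylorexp} exactly as in Step 3 of the proof of Lemma \ref{lowerbound}, the nonlinear remainder is the product of an $L^1$-bounded sequence and an $L^\infty$-infinitesimal one, the truncated fields still converge weakly to $\beta$ (since $\chi_{\Omega_j\setminus\bigcup_iU_j^i}\to1$ in measure), and weak lower semicontinuity of the convex quadratic form $F\mapsto\int\mathbb C F:F$ yields $\liminf_{j\to\infty}\frac1{\varepsilon_j^2|\log\varepsilon_j|}\int_{\Omega_j\setminus\bigcup_iU_j^i}W(\beta_j)\,dx\ge\tfrac12\int_\Omega\mathbb C\beta:\beta\,dx$.

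For the cylinder terms, on $R_j^i$ one has $\curl\beta_j=0$ (the density $\varepsilon_j\mu_j*\varphi_{\varepsilon_j}$ is supported in $V_j^i$ and no other segment meets $U_j^i$) and the circulation of $\beta_j$ around the axis equals $\varepsilon_j b_j^i$, so $\beta_j$ is an admissible competitor on $R_j^i$ for $\Psi_\lambda^{nl}(Q_j,b_j^i,t_j^i,h_i,\varepsilon_j,\rho_j)$ with $h_i:=\mathcal H^1(\gamma_j^i)-2\delta_j$, for every $\lambda>0$. By the normalisation in \eqref{nonlinearcell} and $\sum_i\int_{R_j^i}|\beta_j-Q_j|^2\le\int_{\Omega_j}|\beta_j-Q_j|^2\le C\varepsilon_j^2|\log\varepsilon_j|$ (see \eqref{equazionediprima}),
\[
\frac1{\varepsilon_j^2|\log\varepsilon_j|}\sum_i\int_{R_j^i}W(\beta_j)\,dx\ \ge\ \frac{\log(\rho_j/\varepsilon_j)}{|\log\varepsilon_j|}\sum_i h_i\,\Psi_\lambda^{nl}(Q_j,b_j^i,t_j^i,h_i,\varepsilon_j,\rho_j)\ -\ \lambda C.
\]
Fix $M\ge1$ and $K:=\max\{2\tilde c_1/c_*,1\}$. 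For $j$ large: if $|b_j^i|\le K$, then $M\rho_j\le h_i$ and \eqref{unif1} of Lemma \ref{lemma4} with $\Psi_0\ge\tilde\Psi_0$ give $\Psi_\lambda^{nl}(Q_j,b_j^i,t_j^i,h_i,\varepsilon_j,\rho_j)\ge\tilde\Psi_0(Q_j^Tb_j^i,t_j^i)-CK^2/M-\omega_{M,K}(\varepsilon_j/\rho_j)$; if $|b_j^i|>K$, then \eqref{unif2} and the choice of $K$ give $\Psi_\lambda^{nl}(Q_j,b_j^i,t_j^i,h_i,\varepsilon_j,\rho_j)\ge\tfrac12 c_*|b_j^i|^2\ge\tilde c_1|b_j^i|\ge\tilde\Psi_0(Q_j^Tb_j^i,t_j^i)$ by \eqref{selfen4}. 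Since $\sum_i h_i\le\sum_i\mathcal H^1(\gamma_j^i)\le C$, combining the two cases gives
\[
\frac1{\varepsilon_j^2|\log\varepsilon_j|}\sum_i\int_{R_j^i}W(\beta_j)\,dx\ \ge\ \frac{\log(\rho_j/\varepsilon_j)}{|\log\varepsilon_j|}\Big(\sum_i h_i\,\tilde\Psi_0(Q_j^Tb_j^i,t_j^i)-C\big(\tfrac{K^2}{M}+\omega_{M,K}(\varepsilon_j/\rho_j)\big)\Big)-\lambda C.
\]

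It remains to pass to the limit in the line-tension sum. Since the number of segments is at most $|\mu_j|(\Omega)/h_{\varepsilon_j}\le C/h_{\varepsilon_j}$, $\delta_j/h_{\varepsilon_j}=\alpha_{\varepsilon_j}^2h_{\varepsilon_j}\to0$ and $\tilde\Psi_0(Q_j^Tb_j^i,t_j^i)\le\tilde c_1|b_j^i|$, replacing $h_i$ by $\mathcal H^1(\gamma_j^i)$ costs $o(1)$; replacing $Q_j$ by $Q$ costs at most $\tilde c_1|Q_j-Q|\,|\mu_j|(\Omega)\to0$ by the Lipschitz continuity of $b'\mapsto\tilde\Psi_0(b',t)$ (uniform in $t$, from the linear growth \eqref{selfen4} and subadditivity, see \cite{C.G.M.}); hence $\liminf_j\sum_i h_i\tilde\Psi_0(Q_j^Tb_j^i,t_j^i)=\liminf_j\int_{\gamma_j}\tilde\Psi_0(Q^Tb_j,t_j)\,d\mathcal H^1$, and since $\mu_j\weakstar\mu$ the lower semicontinuity of the relaxed line-tension functional (\cite{C.G.M.}, applied to $\tilde\Psi_0(Q^T\cdot,\cdot)$ with $Q$ fixed) yields $\liminf_j\int_{\gamma_j}\tilde\Psi_0(Q^Tb_j,t_j)\,d\mathcal H^1\ge\int_\gamma\tilde\Psi_0(Q^Tb,t)\,d\mathcal H^1$. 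Collecting the three bounds, letting $j\to\infty$ (so $\log(\rho_j/\varepsilon_j)/|\log\varepsilon_j|\to1$ and $\omega_{M,K}(\varepsilon_j/\rho_j)\to0$), then $M\to\infty$ and finally $\lambda\to0$, we obtain $\liminf_j\mathcal F_{\varepsilon_j}(\mu_j,\beta_j)\ge\tfrac12\int_\Omega\mathbb C\beta:\beta\,dx+\int_\gamma\tilde\Psi_0(Q^Tb,t)\,d\mathcal H^1=\mathcal F_0(\mu,\beta,Q)$. The main obstacle is the lack of uniformity of $C$ and $\omega_{M,K}$ in Lemma \ref{lemma4}: neither the number of segments nor the $|b_j^i|$ are bounded uniformly in $j$, so $K$ cannot be allowed to depend on $j$; the dichotomy $|b_j^i|\lessgtr K$ resolves this by treating the (few, possibly heavy) large-Burgers segments with the fully uniform but crude bound \eqref{unif2}, which already delivers the desired $\tilde\Psi_0$. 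A secondary point, guaranteed by the scale separation above, is that the linearisation region $\Omega_j\setminus\bigcup_iU_j^i$ and the concentration cylinders $R_j^i$ must be genuinely disjoint and jointly exhaust $\Omega$ up to null sets.
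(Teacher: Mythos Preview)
Your argument is correct and follows essentially the paper's strategy: split into a bulk region and hollow cylinders around the segments, linearise in the bulk via Taylor expansion and weak lower semicontinuity, and in the cylinders invoke the auxiliary cell formula $\Psi_\lambda^{nl}$ together with the dichotomy $|b_j^i|\lessgtr K$ and Lemma~\ref{lemma4}. Your handling of the $\lambda$-penalty is slightly cleaner than the paper's: instead of sacrificing a fraction $\nu$ of the energy to manufacture the quadratic term (the $(1-\nu)$ trick in the paper's Step~2), you simply subtract $\lambda C$ using \eqref{equazionediprima} directly and send $\lambda\to0$ at the end; both routes rest on the same compactness estimate.

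One small gap: you assert that all cylinders $U_j^i$ are contained in $\Omega_j$, but under condition~(d) of Definition~\ref{def_dilute2} some segments $\gamma_j^i$ may touch $\partial\Omega$, and for these $U_j^i\not\subset\Omega_j$, so $\sum_i\int_{R_j^i}|\beta_j-Q_j|^2\le\int_{\Omega_j}|\beta_j-Q_j|^2$ is not justified as stated. The paper fixes this by restricting the sum to $I(j):=\{i:\gamma_j^i\subset\Omega_j\}$, obtaining the line tension only on $\gamma\cap\tilde\Omega$ for $\tilde\Omega\subset\subset\Omega$, and then letting $\tilde\Omega\uparrow\Omega$; you should do the same. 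Also, your final remark that the two regions ``jointly exhaust $\Omega$ up to null sets'' is neither true nor needed: disjointness and $W\ge0$ suffice for the splitting inequality.
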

\begin{proof}
Let $(\mu_{j},\beta_{j})$ be a sequence with equibounded energy that converges to $(\mu,\beta,Q)$ with $\curl\beta=0$ as in the statement. Then by Definition \ref{def_conv} there exists a sequence $\{Q_j\}\subset SO(3)$ such that, up to subsequence, $Q_j\to Q$ and
\begin{equation}
    \label{lowbound0} 
    \mu_{j}\stackrel{*}{\rightharpoonup}\mu\quad\text{in}\quad \mathcal{M}_\mathcal{B}(\Omega),
\end{equation}
\begin{equation}
    \frac{Q_j^T\beta_{j}-I}{\varepsilon_j\sqrt{|\log\varepsilon_j|}}\chi_{\Om_j}\rightharpoonup\beta\quad\text{in}\quad L^2(\Omega;\mathbb{R}^{3\times3}),\label{lowbound1}
\end{equation}
where  $\Om_j:=\Om\setminus\{x\in\Om:\dist(x,\partial\Om)\le\varepsilon_j\}$.
Let $\rho_{j}>0$ be as in \eqref{parameters} and define the sets
\begin{equation}
    \Omega_j':=\{x\in\Omega:\dist(x,\supp\mu_j)\ge2\rho_{j}\}\quad\text{and}\quad\Omega_j'':=\Omega\setminus\Omega_j'.
\end{equation}
Then 
\begin{equation*}
    \mathcal{F}_{\varepsilon_j}(\mu_j,\beta_j)=\frac{1}{\varepsilon_j^2|\log\varepsilon_j|}\int_{\Omega_j'}W(\beta_j)dx+\frac{1}{\varepsilon_j^2|\log\varepsilon_j|}\int_{\Omega_j''}W(\beta_j)dx=:F_j'+F_j''.
\end{equation*}

\noindent \textit{Step 1: Lower bound for $F'_j$.} We can perform a Taylor expansion of $W$ near the identity as in \eqref{taylorexp} that yields
\begin{equation}\label{texp}
    W(I+F)\ge\frac{1}{2}\mathbb{C}F:F-\omega(|F|),
\end{equation}
with $\omega(t)/t^2\to0$ as $t\to0$. 
We then set
\begin{equation*}
    G_j:=\frac{Q_j^T\beta_j-I}{\varepsilon_j\sqrt{|\log\varepsilon_j|}},\quad \tilde G_j:=G_j\chi_{\Om_j},
\end{equation*}
and  
\begin{equation*}
    \chi_j:=\begin{cases}
    1&\text{if $|\tilde G_j|\le\varepsilon_j^{-1/2}$}\\
    0&\text{otherwise in $\Omega$}
    \end{cases},\quad
\tilde{\chi}_j:=\chi_j\cdot \chi_{\Omega_j'}.
\end{equation*}
Then using \eqref{texp} we have that
\begin{align*}
    F_j'=\frac{1}{\varepsilon_j^2|\log\varepsilon_j|}\int_{\Omega_j'}W(Q_j^T\beta_j)dx&\ge\frac{1}{\varepsilon_j^2|\log\varepsilon_j|}\int_{\Omega}W(I+\varepsilon_j\sqrt{|\log\varepsilon_j|}G_j)\tilde{\chi}_j\cdot\chi_{\Om_j}dx\\&
    \ge\int_{\Omega}\left(\frac{1}{2}\mathbb{C}\tilde{\chi}_j\tilde G_j:\tilde{\chi}_j\tilde G_j-\tilde{\chi}_j\cdot\chi_{\Om_j}\frac{\omega(\varepsilon_j\sqrt{|\log\varepsilon_j|}|G_j|)}{\varepsilon_j^2|\log\varepsilon_j|}\right)dx.
\end{align*}
Now \eqref{lowbound1} implies that $(\tilde G_j)$ is bounded in $L^2(\Omega;\mathbb{R}^{3\times3})$, then $\tilde{\chi}_j\to1$ in $\Om$ in measure and $\tilde{\chi}_j\tilde G_j\rightharpoonup\beta$ in $L^2(\Omega;\mathbb{R}^{3\times3})$. Therefore by lower semicontinuity it follows that
\begin{equation*}
    \liminf_{j\to\infty}\int_{\Omega}\frac{1}{2}\mathbb{C}\tilde{\chi}_j\tilde G_j:\tilde{\chi}_j\tilde G_jdx\ge\int_{\Omega}\frac{1}{2}\mathbb{C}\beta:\beta dx.
\end{equation*}
On the other hand we have that
\begin{equation*}
  |G_j|^2\chi_{\Om_j}\cdot  \tilde{\chi}_j\frac{\omega(\varepsilon_j\sqrt{|\log\varepsilon_j|}|G_j|)}{\varepsilon_j^2|\log\varepsilon_j||G_j|^2}=|\tilde G_j|^2\cdot\tilde{\chi}_j\frac{\omega(\varepsilon_j\sqrt{|\log\varepsilon_j|}|\tilde G_j|)}{\varepsilon_j^2|\log\varepsilon_j||\tilde G_j|^2}
\end{equation*}
is the product of a bounded sequence in $L^1(\Omega)$ and a sequence tending to zero in $L^\infty(\Omega)$ since $\varepsilon_j\sqrt{|\log\varepsilon_j|}|\tilde G_j|\le\varepsilon_j^{1/2}\sqrt{|\log\varepsilon_j|}$ whenever $\tilde{\chi}_j\ne0$.
Then 
\begin{equation*}
    \lim_{j\to\infty}\int_{\Omega}\tilde{\chi}_j\cdot\chi_{\Om_j}\frac{\omega(\varepsilon_j\sqrt{|\log\varepsilon_j}|G_j|)}{\varepsilon_j^2|\log\varepsilon_j|}dx=0,
\end{equation*}
which implies
\begin{equation}
    \label{term1}\liminf_{j\to\infty}F_j'\ge\int_{\Omega}\frac{1}{2}\mathbb{C}\beta:\beta dx.
\end{equation}

\noindent\textit{Step 2: Lower bound for $F''_j$.} Notice that by \eqref{lowbound1} there exists a constant $C>0$ such that 
\begin{equation}
    \label{lowbound2}   \int_{\Omega}|\beta_{j}-Q_j|^2\chi_{\Om_j}dx\le C\varepsilon_j^2|\log\varepsilon_j|.
\end{equation}
Without loss of generality we can assume that $|\mu|(\Omega)\ge\tilde{C}>0$
 (otherwise there is nothing to prove) and then by lower semicontinuity we have $|\mu_{j}|(\Omega)\ge\tilde{C}$. 

Moreover diluteness of $\mu_{j}$ corresponds to
\begin{equation*}
    \mu_{j}=\sum_ib_{j}^i\otimes t_{j}^i\mathcal{H}^1\res\gamma_{j}^i,
\end{equation*}
with $\gamma^i_j$ satisfying conditions of Definition \ref{def_dilute2}.
Consider the cylinders $U_j^i$, $V_j^i$ defined as in the proof of Theorem 
\ref{compactness}, thus, recalling \eqref{**}, we have that

\begin{multline*}
    \frac{1}{\varepsilon_j^2|\log\varepsilon_j|}\int_{\Omega_j''} W(\beta_{j})dx\ge \frac{1}{\varepsilon_j^2|\log\varepsilon_j|}\sum_i\int_{U_j^i\setminus V^i_j}W(\beta_j)dx\\
    \ge(1+ o(1)) C\sum_i|b_{j}^i|\mathcal{H}^1(\gamma_{j}^i)\ge(1+o(1))C|\mu_{j}|(\Omega)\ge (1+o(1))C,
\end{multline*}
where $o(1)\to0$ as $j\to\infty$.
Let $\nu>0$ and set $\tilde{\lambda}:=\nu/(1-\nu)$, then using  the previous estimates and \eqref{lowbound2} it holds 
\begin{align}
  F_j''&=\frac{1-\nu}{\varepsilon_j^2|\log\varepsilon_j|}\left(\int_{\Omega_j''} W(\beta_{j})dx+\tilde{\lambda}\int_{\Omega_j''} W(\beta_{j})dx\right)\notag\\
&\ge \frac{1-\nu}{\varepsilon_j^2|\log\varepsilon_j|}\left(\int_{\Omega_j''} W(\beta_{j})dx+\tilde{\lambda} (1+o(1))C\varepsilon_j^2|\log\varepsilon_j|\right)\notag\\
&\ge \frac{1-\nu}{\varepsilon_j^2|\log\varepsilon_j|}\left(\int_{\Omega_j''} W(\beta_{j})dx+\tilde{\lambda} C\int_{\Omega_j''}|\beta_{j}-Q_j|^2\chi_{\Om_j}dx\right).\label{*_*}
\end{align}
 By \eqref{nonlinearcell} we get for any $\lambda>0$
\begin{align*}
    \int_{\Omega_j''\cap\Om_j} W(\beta_{j})+\lambda|\beta_{j}-Q_j|^2dx&\ge\sum_{i\in I(j)}\int_{T_j^i\setminus V^i_j}W(\beta_{j})+\lambda|\beta_{j}-Q_j|^2dx\\
&\ge \sum_{i\in I(j)}|S_{j}^i|\varepsilon_j^2\log\frac{\rho_{j}}{\varepsilon_j}\Psi_\lambda^{nl}(Q_j,b_j^i,t_j^i,|S_{j}^i|,\varepsilon_j,\rho_{j})\\
& =\sum_{i\in I(j)}|S_{j}^i|\varepsilon_j^2\log\frac{\rho_{j}}{\varepsilon_j}\Psi_\lambda^{nl}(I,Q_j^Tb_j^i,t_j^i,|S_{j}^i|,\varepsilon_j,\rho_{j}),\end{align*}
    where
    $$I(j):=\{i:\gamma_j^i\subset\Om_j\}.$$
Using this into \eqref{*_*} with $\lambda:=\tilde{\lambda}C$, by \eqref{parameters}, we infer 
\begin{equation}
F_j''  \ge (1-\nu)(1-2h_{\varepsilon_j}\alpha_{\varepsilon_j}^2)\frac{\log(\rho_{j}/\varepsilon_j)}{|\log\varepsilon_j|}\sum_{i\in I(j)}\mathcal{H}^1(\gamma_j^i)\Psi_\lambda^{nl}(I,Q_j^Tb_j^i,t_j^i,|S_{j}^i|,\varepsilon_j,\rho_{j}).\label{lowbound3}
\end{equation}

To conclude the proof fix $M>1$, $K>0$, and denote
\begin{equation*}
    I^1_j:=\{i\in I(j):|b_j^i|\le K\}\quad\text{and}\quad I^2_j:=\{i\in I(j):|b_j^i|>K\}.
\end{equation*}
For sufficiently big $j$ we have $|S_{j}^i|\ge \frac12h_{\eps_j}\ge M\rho_{j}$, for all $i\in I(j)$; thus if $\omega_{M,K}$ is the function given by Lemma \ref{lemma4}, from estimate \eqref{unif1} it follows that
\begin{align}
    \sum_{i\in I_j^1}\mathcal{H}^1(\gamma_j^i)\Psi_\lambda^{nl}&(I,Q_j^Tb_j^i,t_j^i,|S_{j}^i|,\varepsilon_j,\rho_{j})\ge\notag\\
   & \ge\sum_{i\in I_j^1}\mathcal{H}^1(\gamma_j^i)\left(\Psi_0(Q_j^Tb_j^i,t_j^i)-\frac{cK^2}{M}-\omega_{M,K}\left(\frac{\varepsilon_j}{\rho_{j}}\right)\right)\notag\\
   &\ge
   \sum_{i\in I_j^1}\mathcal{H}^1(\gamma_j^i)\Psi_0(Q_j^Tb_j^i,t_j^i)-C\left(\frac{cK^2}{M}+\omega_{M,K}\left(\frac{\varepsilon_j}{\rho_{j}}\right)\right).\label{lowbound4}
  \end{align}  
  Moreover from \eqref{unif2}, using that $|b_j^i|\geq K$  if $i\in I_j^2$, we get
 \begin{align}
         \sum_{i\in I_j^2}\mathcal{H}^1(\gamma_j^i)\Psi_\lambda^{nl}(I,Q_j^Tb_j^i,t_j^i,|S_{j}^i|,\varepsilon_j,\rho_{j})
         &\ge\left(1-\frac{\rho_j}{|S^i_{j}|}\right)c_*\sum_{i\in I_j^2}\mathcal{H}^1(\gamma_j^i)|b_j^i|^2\notag\\
        & \ge \left(1-\frac{\rho_j}{|S^i_{j}|}\right)\sum_{i\in I_j^2}\mathcal{H}^1(\gamma_j^i)c_*K|b_j^i|.
         \label{lowbound5}
         \end{align}  
        If now we choose $K$ such that $c_*K\ge\tilde{c}_1$, where $\tilde{c}_1$ satisfies
 \eqref{selfen4}, and recalling that $\Psi_0(Q_j^Tb_j^i,t_j^i)\ge\tilde{\Psi}_0(Q_j^Tb_j^i,t_j^i)$ we have from \eqref{lowbound4} and \eqref{lowbound5}
    \begin{multline*}
       \sum_{i\in I(j)}\mathcal{H}^1(\gamma_j^i)\Psi_\lambda^{nl}(I,Q_j^Tb_j^i,t_j^i,|S_{j}^i|,\varepsilon_j,\rho_{j})\ge\\\ge \left(1-\frac{\rho_j}{|S^i_{j}|}\right)\sum_{i\in I(j)}\mathcal{H}^1(\gamma_j^i)\tilde{\Psi}_0(Q_j^Tb_j^i,t_j^i)-C\left(\frac{cK^2}{M}+\omega_{M,K}\left(\frac{\varepsilon_j}{\rho_{j}}\right)\right),
    \end{multline*}
    thus for every $\tilde\Om\subset\subset\Om$
    \begin{align}
        \liminf_{j\to\infty}F''_j&\ge\liminf_{j\to\infty}\frac{1}{\varepsilon_j^2|\log\varepsilon_j|}\int_{\Omega''_j\cap\Om_j} W(\beta_j)dx\notag\\
        &\ge(1-\nu)\liminf_{j\to\infty}\sum_{i\in I(j)}\mathcal{H}^1(\gamma_j^i)\tilde{\Psi}_0(Q_j^Tb_j^i,t_j^i)-\frac{CK^2}{M}\notag\\
        &\ge(1-\nu)\int_{\gamma\cap \tilde \Omega}\tilde{\Psi}_0(Q^Tb(x),t(x))d\mathcal{H}^1(x) -\frac{CK^2}{M}\label{term2}
    \end{align}
    where the last inequality follows by the $\mathcal{H}^1$-ellipticity of $\tilde\Psi_0$ and then by the lower semicontinuity of the associated line tension energy,  from the weak$*$ convergence of $Q_j^T\mu_j$ to $Q^T\mu$, with $\mu=b\otimes t\mathcal{H}^1\res\gamma\in\mathcal{M}_\mathcal{B}(\Omega)$. 
    
    \noindent\textit{Conclusions.} From Step 1 and Step 2, specifically from estimates
     \eqref{term1} and \eqref{term2}, taking the limit as  $M\to\infty$  and then as $\nu\to0$ we obtain 
    \begin{equation*}
        \liminf_{j\to\infty}\mathcal{F}_{\varepsilon_j}(\mu_j,\beta_j)\ge\int_{\Omega}\frac{1}{2}\mathbb{C}\beta:\beta\ dx+\int_{\gamma\cap\tilde\Omega}\tilde{\Psi}_0(Q^Tb,t)d\mathcal{H}^1
    \end{equation*}
   Since $\tilde\Om\subset\subset\Omega$ is arbitrary this concludes the proof.
    \end{proof}
   For the upper bound we split the proof into three propositions exploiting the fact that all measures in $\mathcal{M}_\mathcal{B}(\Omega)$ can be approximated by dilute measures. 
   \begin{prop}\label{preliminary_recovery}
Let $\nu\in\mathcal{M}_\mathcal{B}(\R^3)$ be polyhedral, and fix $r>0$. Then for any $\varepsilon>0$, there exists $\theta_\varepsilon^\nu\in L^1(\Om;\R^{3\times3})$ such that $\curl\theta_\varepsilon^\nu=\nu$ in $\Om$ and 
\begin{equation}\label{gamma-sup-linear}
    \limsup_{\varepsilon\to0}\frac{1}{|\log\varepsilon|}\int_{\Om_\eps(\nu)}\mathbb{C}\theta_\varepsilon^\nu:\theta_\varepsilon^\nu dx\le \int_{\gamma\cap(\Omega)_r}\Psi_0(b,t)d\mathcal{H}^1,
\end{equation}
where  $(\Om)_r:=\{x\in\R^3:\dist(x,\Om)<r\}$ and the sequence $\theta_\eps^\nu$ satisfies
\begin{equation}\label{recovery-linear}
|\theta_\eps^\nu(x)|\leq \frac{C}{\dist(x,\gamma)} \qquad \forall x\in \R^3.
\end{equation}
Moreover the sequence
$\hat\theta_\eps^\nu=\theta_\varepsilon^\nu *\varphi_\varepsilon\in L^2(\Om;\R^{3\times3})$, with $\curl\hat\theta_\varepsilon^\nu=\nu*\varphi_\varepsilon$ in $\Om$ satisfies

\begin{equation}\label{gamma-sup-linear-moll}
\limsup_{\varepsilon\to0}\frac{1}{|\log\varepsilon|}\int_\Om\mathbb{C}\hat\theta_\varepsilon^\nu:\hat\theta_\varepsilon^\nu dx\le \int_{\gamma\cap(\Omega)_r}\Psi_0(b,t)d\mathcal{H}^1
\end{equation}
and 
\begin{equation}\label{recovery-linear-moll}
|\hat\theta_\eps^\nu(x)|\leq \frac{C}{\dist(x,\gamma)+\eps}\qquad \forall x\in \R^3.
\end{equation}
\end{prop}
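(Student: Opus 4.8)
The plan is to build $\theta_\varepsilon^\nu$ from the Biot--Savart field of $\nu$, corrected near the segments of $\supp\nu$ by the elastically optimal profiles, via Lemma \ref{estimatefromabove}. Writing $\nu=\sum_{i=1}^N b_i\otimes t_i\mathcal{H}^1\res\gamma_i$ with $\gamma_i$ straight segments, I recall that, $\nu$ being divergence free, $\gamma:=\supp\nu$ is a finite union of closed polygonal curves with vanishing oriented Burgers sum at each branching point. First I would take $\eta_\nu\in L^{3/2}(\R^3;\R^{3\times3})$, the unique solution of \eqref{divsystem} with datum $\nu$ (\cite{BrB}); since $\nu$ is in particular a deformed polyhedral measure (take $\Phi_i=\mathrm{id}$), Lemma \ref{lemma_deform} yields $|\eta_\nu(x)|\le C\sum_i|b_i|/\dist(x,\gamma_i)\le C/\dist(x,\gamma)$ for $x\notin\gamma$, the last bound using that the segments are finitely many, meet only at endpoints, and $|b_i|\ge1$. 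In particular $\eta_\nu\in L^1(\Om)$ and $\int_{\Om_\varepsilon(\nu)}|\eta_\nu|^2\,dx\le C|\log\varepsilon|$.

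Next I would fix three scales $\varepsilon\ll\rho_\varepsilon\ll\delta_\varepsilon\to0$ chosen so that $\rho_\varepsilon^2|\log\varepsilon|\to\infty$, $\log\rho_\varepsilon/\log\varepsilon\to0$ and $\delta_\varepsilon\to0$ (e.g.\ $\rho_\varepsilon=|\log\varepsilon|^{-1/4}$, $\delta_\varepsilon=|\log\varepsilon|^{-1/8}$). For each $i$ let $\gamma_i'$ be the part of $\gamma_i$ lying in $\overline{(\Om)_{r/2}}$, shortened by $\delta_\varepsilon$ at each endpoint, and let $T^i_\varepsilon$, $V^i_\varepsilon$ be the cylinders of radii $\rho_\varepsilon$, $\varepsilon$ and axis $\gamma_i'$ as in \eqref{cilindri_ext}. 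For $\rho_\varepsilon$ small these cylinders are pairwise disjoint, miss $\gamma_k$ for $k\ne i$, and — together with finitely many balls $B^p_\varepsilon=B_{C\delta_\varepsilon}(p)$ around the branching points $p\in(\Om)_{r/2}$ — cover $\{x\in\Om:\dist(x,\gamma)<c\rho_\varepsilon\}$; truncating the tubes to $(\Om)_{r/2}$ is exactly what will produce $(\Om)_r$ on the right-hand side. In each $T^i_\varepsilon$ one has $\curl\eta_\nu=b_i\otimes t_i\mathcal{H}^1\res(\R t_i\cap T^i_\varepsilon)$ and $|\eta_\nu|\le C|b_i|/\dist(\cdot,\R t_i)$, so Lemma \ref{estimatefromabove} (with $R=\rho_\varepsilon$, $r=\varepsilon$, $c^*=C$) gives $\tilde\eta^i_\varepsilon\in L^1(T^i_\varepsilon)$ with $\curl\tilde\eta^i_\varepsilon=\curl\eta_\nu$ in $T^i_\varepsilon$, $\tilde\eta^i_\varepsilon=\eta_\nu$ near $\partial T^i_\varepsilon$, $|\tilde\eta^i_\varepsilon|\le C/\dist(\cdot,\gamma)$ (a property of the construction in \cite{C.G.O.}), and
\[\int_{T^i_\varepsilon\setminus V^i_\varepsilon}\tfrac12\mathbb{C}\tilde\eta^i_\varepsilon:\tilde\eta^i_\varepsilon\,dx\le\Psi_0(b_i,t_i)\,\mathcal{H}^1(\gamma_i')\log\tfrac{\rho_\varepsilon}{\varepsilon}+C|b_i|^2\Big(\mathcal{H}^1(\gamma_i)\big(\log\tfrac{\rho_\varepsilon}{\varepsilon}\big)^{1/2}+\rho_\varepsilon^{-2}\Big).\]
Then I would set $\theta_\varepsilon^\nu:=\tilde\eta^i_\varepsilon$ on each $T^i_\varepsilon$ and $\theta_\varepsilon^\nu:=\eta_\nu$ elsewhere in $\R^3$; the boundary matching makes $\theta_\varepsilon^\nu$ well defined with $\curl\theta_\varepsilon^\nu=\nu$ in $\Om$, $\theta_\varepsilon^\nu\in L^1(\Om)$, and \eqref{recovery-linear} holds.

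For \eqref{gamma-sup-linear} I would split $\Om_\varepsilon(\nu)$ into the hollow cylinders $T^i_\varepsilon\setminus V^i_\varepsilon$, the balls $B^p_\varepsilon$, and the region $\{\dist(\cdot,\gamma)\ge c\rho_\varepsilon\}$: on the balls $|\eta_\nu|\le C/\dist(\cdot,\gamma)$ gives a contribution $\le C\delta_\varepsilon\log(\delta_\varepsilon/\varepsilon)$, on the far region $\le C\rho_\varepsilon^{-2}$, and on the cylinders the displayed estimate applies. Summing over the finitely many indices, dividing by $|\log\varepsilon|$ and sending $\varepsilon\to0$, the lower order, branching and far terms all vanish by the choice of $\rho_\varepsilon,\delta_\varepsilon$, while $\log(\rho_\varepsilon/\varepsilon)/|\log\varepsilon|\to1$ and $\mathcal{H}^1(\gamma_i')\to\mathcal{H}^1(\gamma_i\cap(\Om)_{r/2})$, so the bound converges to $\sum_i\Psi_0(b_i,t_i)\mathcal{H}^1(\gamma_i\cap(\Om)_{r/2})=\int_{\gamma\cap(\Om)_{r/2}}\Psi_0\,d\mathcal{H}^1\le\int_{\gamma\cap(\Om)_r}\Psi_0\,d\mathcal{H}^1$. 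For the mollified field I would set $\hat\theta_\varepsilon^\nu:=\theta_\varepsilon^\nu*\varphi_\varepsilon$, so $\curl\hat\theta_\varepsilon^\nu=\nu*\varphi_\varepsilon$; using $\supp\varphi_\varepsilon\subset B_\varepsilon$, $\varphi_\varepsilon\le c\chi_{B_\varepsilon}/|B_\varepsilon|$ and $|\theta_\varepsilon^\nu|\le C/\dist(\cdot,\gamma)$ one gets \eqref{recovery-linear-moll} by comparing $\dist(x-y,\gamma)$ with $\dist(x,\gamma)$ when $\dist(x,\gamma)\ge2\varepsilon$ and using $\int_{B_\varepsilon(x)}\dist(y,\gamma)^{-1}\,dy\le C\varepsilon^2$ otherwise; Jensen's inequality gives $|\hat\theta_\varepsilon^\nu|^2\le|\theta_\varepsilon^\nu|^2*\varphi_\varepsilon$, hence $\int_\Om\mathbb{C}\hat\theta_\varepsilon^\nu:\hat\theta_\varepsilon^\nu\,dx\le\int_{\{\dist(\cdot,\gamma)>\varepsilon\}\cap(\Om)_\varepsilon}\mathbb{C}\theta_\varepsilon^\nu:\theta_\varepsilon^\nu\,dx+C$, the constant absorbing $\{\dist(\cdot,\gamma)\le2\varepsilon\}$ (where $|\hat\theta_\varepsilon^\nu|\le C/\varepsilon$ on a set of volume $O(\varepsilon^2)$); since $(\Om)_\varepsilon\subset(\Om)_r$ for $\varepsilon<r$, the previous computation run on $(\Om)_r$ yields \eqref{gamma-sup-linear-moll}. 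The hard part will be the joint tuning of the three scales: $\rho_\varepsilon$ must be large enough for the $\rho_\varepsilon^{-2}$ error from Lemma \ref{estimatefromabove} to be $o(|\log\varepsilon|)$, yet small (with $\delta_\varepsilon\to0$) so that the branching error $\delta_\varepsilon\log(\delta_\varepsilon/\varepsilon)$ is $o(|\log\varepsilon|)$ while the main term keeps the full factor $\log(\rho_\varepsilon/\varepsilon)\sim|\log\varepsilon|$ — together with checking that enlarging $\Om$ to $(\Om)_r$ supplies exactly the room needed near $\partial\Om$ once the field is mollified.
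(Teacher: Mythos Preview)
Your proposal is correct and follows the same approach as the paper, which in fact does not give a self-contained proof but refers to \cite[Proposition~6.7]{C.G.O.}: the recovery field is obtained exactly as you describe, by taking the global Biot--Savart solution $\eta_\nu$ of \eqref{divsystem} and replacing it inside disjoint tubes around the segments of $\supp\nu$ by the improved fields produced by Lemma~\ref{estimatefromabove}, so that the pointwise bound \eqref{recovery-linear} is the combination of \eqref{est_2} and \eqref{eul3}; the mollified statement then follows from the unmollified one. Your write-up actually supplies more detail than the paper's own argument (the explicit tuning of $\rho_\varepsilon,\delta_\varepsilon$, the treatment of the branching balls, and the Jensen/pointwise argument for $\hat\theta^\nu_\varepsilon$), all of which are consistent with the construction in \cite{C.G.O.}.
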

\begin{proof}
The proof of this statement is given in \cite[Proposition 6.7]{C.G.O.}, where the $\Gamma$-limsup estimate for the linear problem is obtained. The explicit estimates \eqref{recovery-linear} and \eqref{recovery-linear-moll} can be deduced from the construction of the recovery sequence. Indeed in \cite{C.G.O.} the latter is obtained essentially  by glueing the solution in the whole space given in \eqref{divsystem} with $\mu=\nu$ together with the cell problem solution for each single segment in the support of $\nu$. This is rigorously done by using Lemma \ref{estimatefromabove} (Lemma 5.10 and 5.11 in \cite{C.G.O.}).
 In particular this gives the estimate \eqref{recovery-linear} as a combination of \eqref{est_2} and \eqref{eul3}. 
 Finally \eqref{recovery-linear-moll} can be easily obtained by \eqref{recovery-linear}.
\end{proof}

   \begin{prop}
    \label{upperboundstep1}
 Let $r>0$ and  $(\mu,\beta,Q)\in\mathcal{M}_{\mathcal{B}}(\R^3)\times L^\infty(\Omega;\mathbb{R}^{3\times3})\times SO(3)$ with $\mu=\sum_ib^i\otimes t^i\mathcal{H}^1\res\gamma^i$ polyhedral and $\curl\beta=0$.\\
  Then setting $\beta_\eps=Q+\eps\sqrt{|\log\eps|}\beta+\eps\hat\theta_\eps^\nu$ with $\nu=Q^T\mu=\sum_iQ^Tb^i\otimes t^i\mathcal{H}^1\res\gamma^i$ and $\hat\theta_\varepsilon^\nu$ given by Proposition \ref{preliminary_recovery}, we have
  $\beta_\varepsilon\in\mathcal{AS}_\varepsilon(\mu\res\Om)$ 
\begin{equation}\label{limsup}
  \frac{Q^T\beta_\eps-I}{\eps\sqrt{|\log\eps|}}=\beta+\frac{\hat\theta_\varepsilon^\nu}{\sqrt{|\log\varepsilon|}}\weak \beta\qquad \hbox{in}\quad L^2(\Omega;\R^{3\times 3}),
\end{equation}

 and
    \begin{equation*}
        \limsup_{\varepsilon\to0}\mathcal{F}_\varepsilon(\mu,\beta_\varepsilon)\le \int_{\Omega}\frac{1}{2}\mathbb{C}\beta:\beta\ dx+\int_{\gamma\cap(\Omega)_r}\Psi_0(Q^Tb,t)d\mathcal{H}^1.
    \end{equation*}\end{prop}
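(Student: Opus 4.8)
The plan is to check admissibility and the convergence \eqref{limsup} by a direct computation, and then to bound the energy by a Taylor expansion of $W$ near the identity: away from $\gamma$ the perturbation of $\beta_\varepsilon$ from $SO(3)$ is uniformly infinitesimal, so the expansion produces exactly $\tfrac12\int\mathbb{C}\beta:\beta$ plus the linearised elastic energy of $\hat\theta_\varepsilon^\nu$ (which passes to the line tension by \eqref{gamma-sup-linear-moll}), while on a thin tube around $\gamma$ the nonlinear energy will be absorbed into a vanishing error by the quadratic growth of $W$.

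First I would record the elementary facts. Writing $G_\varepsilon:=\varepsilon\sqrt{|\log\varepsilon|}\,\beta+\varepsilon\hat\theta_\varepsilon^\nu$ we have $\beta_\varepsilon=Q(I+G_\varepsilon)$, hence $Q^T\beta_\varepsilon-I=G_\varepsilon$, which is \eqref{limsup} provided $\hat\theta_\varepsilon^\nu/\sqrt{|\log\varepsilon|}\rightharpoonup0$ in $L^2(\Om;\R^{3\times3})$. This last fact follows from \eqref{recovery-linear-moll}: since $\mu$ is polyhedral, $\gamma$ has finite length, and slicing a tube around $\gamma$ gives $\|\hat\theta_\varepsilon^\nu\|_{L^2(\Om)}^2\le C\int_\Om(\dist(x,\gamma)+\varepsilon)^{-2}\,dx\le C|\log\varepsilon|$, so $\hat\theta_\varepsilon^\nu/\sqrt{|\log\varepsilon|}$ is bounded in $L^2$; as it also converges to $0$ for a.e.\ $x$ (again by \eqref{recovery-linear-moll}), it converges weakly to $0$. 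For admissibility, $Q$ being a constant rotation, $\curl(Q\xi)=Q\,\curl\xi$ and $Q(\sigma*\varphi_\varepsilon)=(Q\sigma)*\varphi_\varepsilon$; together with $\curl\beta=0$, $\curl\hat\theta_\varepsilon^\nu=\nu*\varphi_\varepsilon$ in $\Om$ and $Q\nu=QQ^T\mu=\mu$ this yields $\curl\beta_\varepsilon=\varepsilon\,\mu*\varphi_\varepsilon$ in $\Om$; since $\mu\in\mathcal{M}_\mathcal{B}(\R^3)$ extends $\mu\res\Om$ and $\beta_\varepsilon\in L^2(\Om;\R^{3\times3})$ (because $\beta\in L^\infty$ and $\hat\theta_\varepsilon^\nu\in L^2(\Om)$), we get $\beta_\varepsilon\in\mathcal{AS}_\varepsilon(\mu\res\Om)$.

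For the energy bound I would use frame indifference, $W(\beta_\varepsilon)=W(I+G_\varepsilon)$, fix $s\in(0,1)$ and split $\Om$ along the tube $\{\dist(\cdot,\gamma)\le\varepsilon^{1-s}\}$, writing $\psi_\varepsilon:=\sqrt{|\log\varepsilon|}\,\beta+\hat\theta_\varepsilon^\nu$, so that $G_\varepsilon=\varepsilon\psi_\varepsilon$. On the outer part $\{\dist(\cdot,\gamma)>\varepsilon^{1-s}\}$, \eqref{recovery-linear-moll} gives $|G_\varepsilon|\le\varepsilon\sqrt{|\log\varepsilon|}\,\|\beta\|_{L^\infty}+C\varepsilon^{s}\to0$ uniformly, so the expansion $W(I+F)=\tfrac12\mathbb{C}F:F+\sigma(F)$ with $\sigma(F)=o(|F|^2)$ applies; dividing by $\varepsilon^2|\log\varepsilon|$, the leading contribution is at most
\[
\frac{1}{2|\log\varepsilon|}\int_\Om\mathbb{C}\psi_\varepsilon:\psi_\varepsilon\,dx
=\frac12\int_\Om\mathbb{C}\beta:\beta\,dx+\frac{1}{\sqrt{|\log\varepsilon|}}\int_\Om\mathbb{C}\beta:\hat\theta_\varepsilon^\nu\,dx+\frac{1}{2|\log\varepsilon|}\int_\Om\mathbb{C}\hat\theta_\varepsilon^\nu:\hat\theta_\varepsilon^\nu\,dx ,
\]
whose middle term vanishes by the weak convergence just proved, and whose last term has $\limsup$ bounded by $\int_{\gamma\cap(\Om)_r}\Psi_0(Q^Tb,t)\,d\mathcal{H}^1$ by \eqref{gamma-sup-linear-moll} applied to $\nu=Q^T\mu$ (whose Burgers field is $Q^Tb$); the remainder $\tfrac1{\varepsilon^2|\log\varepsilon|}\int\sigma(G_\varepsilon)$ is controlled by $o(1)\cdot\tfrac1{|\log\varepsilon|}\int_\Om|\psi_\varepsilon|^2\,dx$, which tends to $0$ since the last integral is $O(|\log\varepsilon|)$. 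On the inner tube I would only use the upper bound in \eqref{quadraticgrowth}: $W(\beta_\varepsilon)\le C\dist^2(\beta_\varepsilon,SO(3))\le C|G_\varepsilon|^2=C\varepsilon^2|\psi_\varepsilon|^2$, so its contribution to $\mathcal{F}_\varepsilon$ is at most
\[
\frac{C}{|\log\varepsilon|}\int_{\{\dist(\cdot,\gamma)\le\varepsilon^{1-s}\}}|\psi_\varepsilon|^2\,dx
\le 2C\int_{\{\dist(\cdot,\gamma)\le\varepsilon^{1-s}\}}|\beta|^2\,dx+\frac{2C}{|\log\varepsilon|}\int_{\{\dist(\cdot,\gamma)\le\varepsilon^{1-s}\}}|\hat\theta_\varepsilon^\nu|^2\,dx ;
\]
the first integral vanishes because the tube has volume $O(\varepsilon^{2(1-s)})$ and $\beta\in L^2$, while the second, by \eqref{recovery-linear-moll} and slicing, is $\le\tfrac{C}{|\log\varepsilon|}\mathcal{H}^1(\gamma)\log\tfrac{2\varepsilon^{1-s}}{\varepsilon}$, whose $\limsup$ is $\le Cs$. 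Adding the two parts gives $\limsup_{\varepsilon\to0}\mathcal{F}_\varepsilon(\mu,\beta_\varepsilon)\le\tfrac12\int_\Om\mathbb{C}\beta:\beta\,dx+\int_{\gamma\cap(\Om)_r}\Psi_0(Q^Tb,t)\,d\mathcal{H}^1+Cs$, and letting $s\to0$ concludes.

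The hard part is this core estimate. Since $\hat\theta_\varepsilon^\nu$ diverges like $(\dist(\cdot,\gamma)+\varepsilon)^{-1}$, near $\gamma$ the perturbation $G_\varepsilon$ is not infinitesimal and the Taylor expansion is useless there; one has to show instead that the full nonlinear energy carried by the tube $\{\dist(\cdot,\gamma)\le\varepsilon^{1-s}\}$ is only an $O(s)$ fraction of $|\log\varepsilon|$. This rests on the \emph{sharp} pointwise bound \eqref{recovery-linear-moll} (so that the logarithmic divergence, cut at radius $\varepsilon^{1-s}$, contributes exactly $\log(\varepsilon^{1-s}/\varepsilon)=s|\log\varepsilon|$) and on the $L^\infty$ bound on $\beta$ (so that $\varepsilon\sqrt{|\log\varepsilon|}\,\beta$ stays uniformly small off the tube); once these are in place, everything else reduces to the linear statement recalled in Proposition \ref{preliminary_recovery}.
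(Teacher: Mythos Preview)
Your proof is correct and follows essentially the same route as the paper's: the same admissibility check, the same splitting of $\Omega$ into a thin tube of radius $\varepsilon^{1-s}$ around $\gamma$ (the paper writes it as $\varepsilon^\alpha$ with $\alpha\to1$, which is the identical decomposition under $\alpha=1-s$), Taylor expansion on the outer part yielding the bulk term, the cross term, and the linear self-energy handled by \eqref{gamma-sup-linear-moll}, and the quadratic growth bound on the inner tube producing an $O(s)$ error removed at the end. The only cosmetic differences are that you argue the weak convergence of $\hat\theta_\varepsilon^\nu/\sqrt{|\log\varepsilon|}$ via boundedness in $L^2$ plus a.e.\ convergence, while the paper uses boundedness in $L^2$ plus strong $L^1$ convergence; both are valid.
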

\begin{proof}
Let $(\mu,\beta,Q)$ and $\beta_\eps$ be as in the statement. 
Clearly $\beta_\varepsilon\in\mathcal{AS}_\varepsilon(\mu\res\Om)$, furthermore it holds
\begin{equation*}
    \frac{Q^T\beta_\varepsilon-I}{\varepsilon\sqrt{|\log\varepsilon|}}=\beta+\frac{\hat\theta_\varepsilon^\nu}{\sqrt{|\log\varepsilon|}}\rightharpoonup\beta\quad\text{in $L^2(\Omega;\mathbb{R}^{3\times3})$}.
\end{equation*}
Indeed from \eqref{recovery-linear-moll} we obtain that $\hat\theta_\varepsilon^\nu/\sqrt{|\log\varepsilon|}$ is bounded in $L^2(\Omega;\mathbb{R}^{3\times3})$ and  converges to zero strongly in $L^1(\Omega;\mathbb{R}^{3\times3})$.
Then $(\mu\res\Om,\beta_\varepsilon)$ converges to $(\mu\res\Om,\beta,Q)$ in the sense of Definition \ref{def_conv} and also satisfies \eqref{limsup}.\\
We define $\Om_{\eps^\alpha}(\mu):=\{x\in\Om:\dist(x,\supp\mu)>\eps^\alpha\}$ and $(\gamma)_{\eps^\alpha}:=\Om\setminus\Om_{\eps^\alpha}(\mu)$  for $\alpha\in(0,1)$.
	Then using the frame indifference and the Taylor expansion we get
\begin{equation*}
\begin{split}
&\frac{1}{\varepsilon^2|\log\varepsilon|}\int_{\Om_{\eps^\alpha}(\mu)} W(I+\varepsilon\sqrt{|\log\varepsilon|}\beta+\varepsilon\hat\theta_\varepsilon^\nu)dx
\\
&=\int_{\Om_{\eps^\alpha}(\mu)}\frac{1}{2}\mathbb{C}\beta:\beta dx+\frac{1}{|\log\varepsilon|}\int_{\Om_{\eps^\alpha}(\mu)}\frac{1}{2}\mathbb{C}\hat\theta_\varepsilon^\nu:\hat\theta_\varepsilon^\nu dx    +\frac{1}{\sqrt{|\log\varepsilon|}}\int_{\Om_{\eps^\alpha}(\mu)}\mathbb{C}\beta:\hat\theta_\varepsilon^\nu dx\\
&\qquad+ \int_{\Om_{\eps^\alpha}(\mu)}\frac{\sigma(\varepsilon\sqrt{|\log\varepsilon|}\beta+\varepsilon\hat\theta_\varepsilon^\nu)}{\varepsilon^2|\log\varepsilon|}dx,
\end{split}
\end{equation*}
where $\sigma(F)/|F|^2\to0$ as $|F|\to0$.
By Proposition \ref{preliminary_recovery} we deduce
\begin{equation}\label{alpha-limit}
    \limsup_{\varepsilon\to0}\frac{1}{|\log\varepsilon|}\int_\Omega\frac{1}{2}\mathbb{C}\hat\theta_\varepsilon^\nu:\hat\theta_\varepsilon^\nu dx \le\int_{\gamma\cap \Omega_r}\Psi_0(Q^Tb,t)d\mathcal{H}^1.
\end{equation}
Recalling that $\beta\in L^\infty(\Omega;\mathbb{R}^{3\times3})$ and, from \eqref{recovery-linear-moll}, $\hat\theta_\varepsilon^\nu/\sqrt{|\log\varepsilon|}$ converges to zero strongly in $L^1(\Omega;\mathbb{R}^{3\times3})$ we have
\begin{equation*}
    \lim_{\varepsilon\to0}\frac{1}{\sqrt{|\log\varepsilon|}}\int_{\Om_{\eps^\alpha}(\mu)}\mathbb{C}\beta:\hat\theta_\varepsilon^\nu dx=0.
\end{equation*}
Finally, setting $\omega(t):=\sup_{|F|\le t}|\sigma(F)|$ we find
\begin{multline*}
    \lim_{\varepsilon\to0}\left|     \int_{\Om_{\eps^\alpha}(\mu)}\frac{\sigma(\varepsilon\sqrt{|\log\varepsilon|}\beta+\varepsilon\hat\theta_\varepsilon^\nu)}{\varepsilon^2|\log\varepsilon|}dx\right|\\
    \le\lim_{\varepsilon\to0}\int_\Omega\chi_{\Om_{\eps^\alpha}(\mu)}\frac{\omega(|\varepsilon\sqrt{|\log\varepsilon|}\beta+\varepsilon\hat\theta_\varepsilon^\nu|)}{|\varepsilon\sqrt{|\log\varepsilon|}\beta+\varepsilon\hat\theta_\varepsilon^\nu|^2}\cdot \frac{|\varepsilon\sqrt{|\log\varepsilon|}\beta+\varepsilon\hat\theta_\varepsilon^\nu|^2}{\varepsilon^2|\log\varepsilon|}dx=0.
\end{multline*}
Indeed by \eqref{recovery-linear-moll}, in $\Omega_{\eps^\alpha}(\mu)$ we have $|\hat\theta_\varepsilon^\nu|\leq C \eps^{-\alpha}$ and then  the integrand
\begin{equation*}
    \frac{\omega(|\varepsilon\sqrt{|\log\varepsilon|}\beta+\varepsilon\hat\theta_\varepsilon^\nu|)}{|\varepsilon\sqrt{|\log\varepsilon|}\beta+\varepsilon\hat\theta_\varepsilon^\nu|^2}\cdot \frac{|\varepsilon\sqrt{|\log\varepsilon|}\beta+\varepsilon\hat\theta_\varepsilon^\nu|^2}
    {\varepsilon^2|\log\varepsilon|}
\end{equation*}
is the product of a sequence converging to zero in $L^\infty(\Omega)$ and a bounded sequence in $L^1(\Omega)$.

Finally it remains to estimate the energy in $(\gamma)_{\eps^{\alpha}}$. Using the frame indifference and the estimate from above for $W$ we get
\begin{align*}
\frac{1}{\eps^2|\log\eps|}\int_{(\gamma)_{\eps^{\alpha}}}W(\beta_\eps)dx&\le 	\frac{1}{\eps^2|\log\eps|}\int_{(\gamma)_{\eps^{\alpha}}}|\eps\sqrt{|\log\eps|}\beta+\eps\hat\theta_\eps^\nu|^2dx\\
&\le 2\int_{(\gamma)_{\eps^{\alpha}}}|\beta|^2dx+\frac{2}{|\log\eps|}\int_{(\gamma)_{\eps^{\alpha}}}|\hat\theta_\eps^\nu|^2dx,
\end{align*}
where the first term of the right hand side tends to zero as $\eps\to 0$, while
from \eqref{recovery-linear-moll} we have
$$
\lim_{\eps\to 0}\frac{1}{|\log\eps|}\int_{(\gamma)_{\eps^{\alpha}}}|\hat\theta_\eps^\nu|^2dx\le C(1-\alpha).
$$
Thus we conclude that
\begin{align*}
 \limsup_{\varepsilon\to0}\mathcal{F}_\varepsilon(\mu,\beta_\varepsilon)&\leq  \int_{\Omega}\frac{1}{2}\mathbb{C}\beta:\beta dx + \limsup_{\varepsilon\to0}\frac{1}{|\log\varepsilon|}\int_\Omega\frac{1}{2}\mathbb{C}\hat\theta_\varepsilon^\nu:\hat\theta_\varepsilon^\nu dx\\&\,\,\,\,\, +\frac{2}{|\log\eps|}\int_{(\gamma)_{\eps^{\alpha}}}|\hat\theta_\eps^\nu|^2dx\\
 &\le \int_{\Omega}\frac{1}{2}\mathbb{C}\beta:\beta dx+\int_{\gamma\cap (\Omega)_r}\Psi_0(Q^Tb,t)d\mathcal{H}^1+ C(1-\alpha).
\end{align*}
which concludes the proof taking the limit as $\alpha\to 1$.\end{proof}
\begin{prop}[Upper Bound] Let $(\mu,\beta,Q)\in\mathcal{M}_\mathcal{B}(\Omega)\times L^2(\Omega;\mathbb{R}^{3\times3})\times SO(3)$ with $\curl\beta=0$. Then, for every sequence $\varepsilon_k\to0$ there exists a sequence $(\tilde\mu_k,\beta_k)\in\mathcal{M}_\mathcal{B}(\mathbb{R}^3)\times\mathcal{AS}_{\varepsilon_k}(\tilde\mu_k\res\Omega)$ such that $\tilde\mu_k\res\Omega$ is $(h_{\varepsilon_k},\alpha_{\varepsilon_k})$-dilute in $\bar\Om$, $\tilde\mu_k\weakstar\mu$ in $\Omega$,
	$$
	\frac{Q^T\beta_k-I}{\eps_k\sqrt{|\log\eps_k|}}\weak \beta\quad \hbox{in}\quad L^2(\Omega;\R^{3\times 3}),
	$$
 and
\begin{equation*}
    \limsup_{k\to\infty}\mathcal{F}_{\eps_k}(\tilde\mu_k\res\Omega,\beta_k)\le\mathcal{F}_0(\mu,\beta,Q).
\end{equation*}
\label{upperboundstep2}
\end{prop}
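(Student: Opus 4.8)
The plan is to reduce the general triple $(\mu,\beta,Q)$ to the polyhedral case already handled in Proposition~\ref{upperboundstep1} by a two-fold approximation followed by a diagonal argument. First I would approximate the limit strain: since $\curl\beta=0$ in the simply connected domain $\Omega$, write $\beta=\nabla v$ with $v\in W^{1,2}(\Omega;\R^3)$, and approximate $v$ by $v_n\in C^\infty(\bar\Omega;\R^3)$ (or at least $W^{1,\infty}$), so that $\beta_n:=\nabla v_n\in L^\infty(\Omega;\R^{3\times3})$, $\curl\beta_n=0$, and $\beta_n\to\beta$ strongly in $L^2$. By continuity of the quadratic form, $\int_\Omega\frac12\mathbb C\beta_n:\beta_n\,dx\to\int_\Omega\frac12\mathbb C\beta:\beta\,dx$. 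Second, I would approximate the dislocation measure: by the density result quoted from \cite{C.G.M.} (the same one invoked in \cite{C.G.O.} for the linear $\Gamma$-limsup), there exist polyhedral measures $\mu_n\in\mathcal M_\mathcal B(\R^3)$ with $\mu_n\res\Omega\weakstar\mu$ in $\Omega$ and
\[
\limsup_{n\to\infty}\int_{\gamma_n}\tilde\Psi_0(b_n,t_n)\,d\mathcal H^1\le\int_\gamma\tilde\Psi_0(b,t)\,d\mathcal H^1,
\]
and moreover, by the very definition of $\tilde\Psi_0$ as the $\mathcal H^1$-elliptic envelope \eqref{relaxedenergy}, one can further perturb each $\mu_n$ into a polyhedral $\nu_n$ (still converging weakly$*$ to $\mu$) whose \emph{unrelaxed} line-tension energy $\int_{\gamma_n}\Psi_0(Q^Tb_n,t_n)\,d\mathcal H^1$ is as close as we like to $\int_\gamma\tilde\Psi_0(Q^Tb,t)\,d\mathcal H^1$; this is precisely where the relaxation is absorbed into the construction, exactly as in \cite{C.G.M., C.G.O.}. (One must also rotate by $Q$: replace the target Burgers field $b$ by $Q^Tb$ throughout, consistently with the statement of Proposition~\ref{upperboundstep1}.)

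Next, for each fixed $n$ I would apply Proposition~\ref{upperboundstep1} to the polyhedral triple $(\nu_n,\beta_n,Q)$ with some fixed $r>0$, obtaining a recovery sequence $\beta_\varepsilon^{(n)}\in\mathcal{AS}_\varepsilon(\nu_n\res\Omega)$ with $\frac{Q^T\beta_\varepsilon^{(n)}-I}{\varepsilon\sqrt{|\log\varepsilon|}}\weak\beta_n$ in $L^2$ and
\[
\limsup_{\varepsilon\to0}\mathcal F_\varepsilon(\nu_n\res\Omega,\beta_\varepsilon^{(n)})\le\int_\Omega\tfrac12\mathbb C\beta_n:\beta_n\,dx+\int_{\gamma_n\cap(\Omega)_r}\Psi_0(Q^Tb_n,t_n)\,d\mathcal H^1.
\]
Two technical points arise here. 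One is that Proposition~\ref{upperboundstep1} produces $\nu_n\res\Omega\in\mathcal M_\mathcal B(\Omega)$, but the Proposition we are proving requires the competitor to be $(h_{\varepsilon_k},\alpha_{\varepsilon_k})$-dilute \emph{in $\bar\Omega$}; this is automatic provided $\nu_n$ was chosen with segments of length at least $h$ and mutual separation at least $\alpha h$ for fixed $h,\alpha$, because then for $\varepsilon_k$ small enough $h_{\varepsilon_k}<h$ and $\alpha_{\varepsilon_k}<\alpha$, so $\nu_n\res\Omega\in\mathcal M_\mathcal B^{h_{\varepsilon_k},\alpha_{\varepsilon_k}}(\bar\Omega)$. (Condition (d) on the boundary is handled by using the extension Lemma~\ref{extension_2} and Remark~\ref{ossex}(c) when building $\nu_n$, or by noting that $\nu_n$ may be taken away from $\partial\Omega$ since $\mu$ is.) The other point is that the dependence on $r$ is harmless: one takes $r\to0$ at the end, or equivalently chooses the polyhedral approximants so that $\gamma_n\subset\Omega$, making $\gamma_n\cap(\Omega)_r=\gamma_n$.

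Finally I would run the diagonal argument. For each $n$ pick $\varepsilon(n)$ so small that (i) $h_{\varepsilon(n)}<h$, $\alpha_{\varepsilon(n)}<\alpha$ and the diluteness conditions \eqref{diluteness1}, \eqref{diluteness2} are met along $\varepsilon_k$ past some index, (ii) $\mathcal F_{\varepsilon(n)}(\nu_n\res\Omega,\beta_{\varepsilon(n)}^{(n)})$ is within $1/n$ of the limsup above, and (iii) $\frac{Q^T\beta_{\varepsilon(n)}^{(n)}-I}{\varepsilon(n)\sqrt{|\log\varepsilon(n)|}}$ is within $1/n$ of $\beta_n$ in a metric for weak $L^2$-convergence on bounded sets, and $\nu_n\res\Omega$ is within $1/n$ of $\mu$ in a metric metrizing weak$*$ convergence of bounded measures. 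Relabeling along the given sequence $\varepsilon_k\to0$ by a standard reindexing (Attouch's diagonalization lemma / the fact that $\Gamma$-limsup inequalities pass to diagonal sequences), one produces $(\tilde\mu_k,\beta_k):=(\nu_{n(k)},\beta_{\varepsilon_k}^{(n(k))})$ with $\tilde\mu_k\res\Omega\weakstar\mu$, $\frac{Q^T\beta_k-I}{\varepsilon_k\sqrt{|\log\varepsilon_k|}}\weak\beta$, and
\[
\limsup_{k\to\infty}\mathcal F_{\varepsilon_k}(\tilde\mu_k\res\Omega,\beta_k)\le\limsup_{n\to\infty}\Bigl(\int_\Omega\tfrac12\mathbb C\beta_n:\beta_n\,dx+\int_{\gamma_n}\Psi_0(Q^Tb_n,t_n)\,d\mathcal H^1\Bigr)\le\mathcal F_0(\mu,\beta,Q),
\]
using the two approximation estimates. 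The main obstacle is the combined measure-plus-relaxation approximation: one needs polyhedral $\nu_n$ that are simultaneously (a) weakly$*$ close to $\mu$, (b) of controlled geometry so that they are eventually $(h_{\varepsilon_k},\alpha_{\varepsilon_k})$-dilute in $\bar\Omega$, and (c) such that their \emph{unrelaxed} $\Psi_0$-energy converges to the \emph{relaxed} $\tilde\Psi_0$-energy of $\mu$; all three are exactly the content of the approximation machinery in \cite{C.G.M.} and its use in \cite[Section 6]{C.G.O.}, so the proof consists largely in invoking those results and checking compatibility of the geometric parameters, after which the rest is bookkeeping and a routine diagonal extraction.
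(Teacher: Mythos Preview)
Your proposal is correct and follows essentially the same strategy as the paper: reduce to $\beta\in L^\infty$, approximate $\mu$ by polyhedral measures whose \emph{unrelaxed} $\Psi_0$-energy recovers the relaxed $\tilde\Psi_0$-energy of $\mu$ (via \cite{C.G.M.} and \cite[Lemma 6.4]{C.G.O.}), apply Proposition~\ref{upperboundstep1}, and diagonalize. The only places where the paper is slightly more explicit are the handling of the parameter $r$ (taken as $r=1/j$ along the approximation) and condition (d) at the boundary, which is arranged by perturbing the polyhedral segments to meet $\partial\Omega$ at a positive angle rather than by assuming the support stays away from $\partial\Omega$---your parenthetical ``since $\mu$ is'' is not justified in general.
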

\begin{proof}
	By a standard density argument we can assume that $\beta\in L^\infty(\Omega;\mathbb{R}^{3\times3})$.
The proof is based on a diagonal argument using Proposition \ref{upperboundstep1} and it is analogous to the linear case (\cite[Proposition 6.8]{C.G.O.}). \\
Let $Q^T\mu=Q^Tb\otimes t\mathcal{H}^1\res\gamma$,
by \cite[Theorem 3.1]{C.G.M.} 
\begin{equation*}
    \int_\gamma\tilde{\Psi}_0(Q^Tb,t)d\mathcal H^1,
\end{equation*}
with $\tilde{\Psi}_0$ defined in \eqref{relaxedenergy}, is the lower semicontinuous envelope of
\begin{equation*}
    \int_\gamma{\Psi}_0(Q^Tb,t)d\mathcal H^1.
\end{equation*}
Then, we can find  a sequence $\nu_j=b_j\otimes t_j\mathcal{H}^1\res\gamma_j\in\mathcal{M}_\mathcal{B}(\Omega)$ 
converging weak$^*$ to $\mu$ such that
\begin{equation}
\limsup_{j\to\infty}\int_{\gamma_j}\Psi_0(Q^T b_j,t_j)d\mathcal H^1\leq \int_\gamma \tilde{\Psi}_0(Q^Tb,t)d\mathcal H^1 .\label{ultima}
\end{equation}

Now we denote by $\mathcal{F}$ the functional
\begin{equation}\label{unrelaxed}
\mathcal{F}(\mu,\beta,Q, r):=\int_{\Omega}\frac{1}{2}\mathbb{C}\beta:\beta dx+\int_{\gamma\cap(\Omega)_r}\Psi_0(Q^Tb,t)d\mathcal{H}^1,
\end{equation}
for $r>0$.
Then for all $j$ we apply  \cite[Lemma 6.4]{C.G.O.}
and find a polyhedral measure  $\mu_j:=b_j\otimes t_j\mathcal{H}^1\res\gamma_j\in\mathcal{M}_\mathcal{B}(\R^3)$ such that  
\begin{equation}\label{limiteenergie}
\mathcal{F}(\mu_j,\beta,Q, \textstyle{\frac1j})\le (1+c\textstyle{\frac1j}) \mathcal{F}(\nu_j,\beta,Q,0)+C\textstyle{\frac1j},
\end{equation} 
and $\mu_j$ is close to $\nu_j$ in the following sense: there exists a bi-Lipschitz map $f^j:\R^3\to\R^3$, with 
\begin{equation}\label{approx-bilip-2}
|f^j(x)-x|+|Df^j(x)-Id|<\frac1j\qquad \forall x\in \R^3,
\end{equation}
such that
\begin{equation}\label{approx-bilip}
|\mu_j-f^j_\sharp\nu_j|(\Omega)<\frac{1}{j}.
\end{equation}
In particular $\mu_j\weakstar \mu$ in $\Omega$ as $j\to\infty$.
%
Furthermore since the restriction $\mu_j\res\Omega$ is polyhedral it is not restrictive to assume that the segments of the support of $\mu_j$ intersect the boundary of $\Omega$ with an angle at most $\alpha_j>0$ (otherwise a small modification of the support of $\mu_j$ for segments that are tangent to $\partial\Omega$ will reduce to the latter case with arbitrary small errors in the line tension energy and therefore in \eqref{limiteenergie}). Thus $\mu_j$ is $(h_\eps,\alpha_\eps)$-dilute in $\bar\Om$ according to Definition \ref{def_dilute2}, for sufficiently small $\eps$. 
From Proposition \ref{upperboundstep1} applied to $(\mu_j,\beta, Q)$, for every $j$ there is a sequence $\beta_k^j\in\mathcal{AS}_{\varepsilon_k}(\mu_j\res\Omega)$ that satisfies 
\begin{equation}\label{l2conv}
\frac{Q^T\beta_k^j-I}{\eps_k\sqrt{|\log\eps_k|}}=\beta+\frac{\hat\theta_{\varepsilon_k}^{\mu_j}}{\sqrt{|\log{\varepsilon_k}|}}\weak \beta\qquad \hbox{in}\ L^2(\Omega;\R^{3\times 3}),
\end{equation}
and
\begin{equation}
    \limsup_{k\to\infty}\mathcal{F}_{\varepsilon_k}(\mu_j\res\Om,\beta_k^j)\le
    \mathcal{F}(\mu_j,\beta,Q, \textstyle{\frac1j}).
\end{equation} 
The function $\hat\theta_{\eps_k}^{\mu_j}$ is given by Proposition \ref{preliminary_recovery} and by \eqref{gamma-sup-linear-moll} satisfies
\begin{equation*}
\left\|\frac{\hat\theta_{\eps_k}^{\mu_j}}{\sqrt{|\log\eps|}}\right\|_{L^2(\Om;\R^{3\times3})}\le M.
\end{equation*}
By \eqref{ultima} we finally obtain
\begin{equation}\label{limsupp}
\limsup_{j\to\infty} \limsup_{k\to\infty}\mathcal{F}_{\varepsilon_k}(\mu_j\res\Om,\beta_k^j)\le \mathcal{F}_0(\mu,\beta,Q).
\end{equation}

In order to construct a diagonal sequence which satisfies the thesis, we follow the same idea of \cite{C.G.O.}, and we notice that the following properties are satisfied for $k$ large enough:

\begin{enumerate}[(1)]
	\item The measures $\mu_j$ are $(h_{\eps_k},\alpha_{\eps_k})$-dilute in $\bar\Om$;
	\item $\mathcal{F}_{\varepsilon_k}(\mu_j\res\Om,\beta_k^j)\le\mathcal{F}(\mu_j,\beta,Q, \textstyle{\frac1j})+\frac1j$;
	\item $d(\hat\theta_{\eps_k}^{\mu_j}/\sqrt{|\log\eps_k|},0)\le\frac1j$ where $d$ denotes the distance that metrizes the weak convergence in $L^2\cap\{f:\|f\|_{L^2(\Om;\R^{3\times3})}\le M\}$.
\end{enumerate}

For every $j$ we define an increasing sequence of indices $m(j)$ as follows

\begin{equation}
m(j):=\min\big\{m\geq m(j-1)\,:\ \mu_j\quad \text{satisfies (1)-(3)}\quad\forall k\ge m\big\}.
\label{diagonalseq}
\end{equation}
Now for every $k>0$ we define $\tilde\mu_k:=\mu_j$ and $\beta_k:=\beta_k^j$  if $k\in [m(j),m(j+1) )\cap\N$. By \eqref{approx-bilip} we have
$$
|\tilde\mu_k-f^j_\sharp\nu_j|(\Omega)<\frac{1}{j}\quad \forall k\in [m(j),m(j+1) ).
$$
and then from \eqref{approx-bilip-2} and the fact that $\nu_j$ weak$^*$ converges to $\mu$, we  conclude that $\tilde\mu_k$ weak$^*$ converges to $\mu$. In addition by (3) and (2) we also have
$$
\frac{Q^T\beta_k-I}{\eps_k\sqrt{|\log\eps_k|}}\weak\beta\qquad \hbox{in}\ L^2(\Omega;\R^{3\times 3}),
$$
\begin{equation*}
 \limsup_{k\to\infty}\mathcal{F}_{\varepsilon_k}(\tilde\mu_k\res\Om,\beta_k)\le \mathcal{F}_0(\mu,\beta,Q),
\end{equation*}
and the proof is concluded.

\end{proof}
\begin{proof}[Proof of Theorem \ref{gammalimit}]
The thesis is a direct consequence of Propositions \ref{lowerboundstep1} and \ref{upperboundstep2}.
\end{proof}

\section*{Acknowledgments}
The present paper  benefits from the support of the GNAMPA (Gruppo Nazionale per l'Analisi Matematica, 
la Probabilit\`a e le loro Applicazioni) of INdAM 
(Istituto Nazionale di Alta Matematica). 
\addcontentsline{toc}{section}{Bibliografia}

\end{document}